\newtheorem{thm}{Theorem}[section]
\newtheorem{defn}[thm]{Definition}
\newtheorem{lem}[thm]{Lemma}
\newtheorem{prop}[thm]{Proposition}
\newtheorem{coro}[thm]{Corollary}
\newcommand{\myfrac}[3][0pt]{\genfrac{}{}{}{}{\raisebox{#1}{$#2$}}{\raisebox{-#1}{$#3$}}}
\newcommand{\ml}[2]{\begin{tabular}{@{} >{$}#1<{$} @{}} #2 \end{tabular}}
\numberwithin{equation}{section}
\title{An infinite family of axial algebras}
\author{Madeleine Whybrow\thanks{Arbeitsgruppe Algebra, Geometrie und Computeralgebra, TU Kaiserslautern; mlw10@ic.ac.uk}}
\date{}
\begin{document}

\maketitle

\begin{abstract}
Axial algebras are non-associative algebras generated by semisimple idempotents, known as axes, that all obey a fusion rule. Axial algebras were introduced by Hall, Rehren and Shpectorov as a generalisation of the axioms of Majorana theory, which was in turn introduced by Ivanov as an axiomatisation of certain properties of the $2A$-axes of the Griess algebra $V_{\mathbb{M}}$. Axial algebras of Monster type are axial algebras whose axes obey the Monster, or Majorana, fusion rule. 

We construct an axial algebra of Monster type $M_{4A}$ over the polynomial ring $\mathbb{R}[t]$ that is generated by six axes whose Miyamoto involutions generate an elementary abelian group of order $4$. This construction automatically provides an infinite-parameter family $\{M(t)\}_{t \in \mathbb{R}}$ of axial algebras of Monster type each of which admit a unique Frobenius form. Moreover, we show that this form on $M(t)$ is positive definite if and only if $0 < t < \frac{1}{6}$ and also satisfies Norton's inequality if and only if $0 \leq t \leq \frac{1}{6}$. Finally, we show that the $4A$ axes of $M_{4A}$ obey a $C_2 \times C_2$-graded fusion rule giving a new infinite family of fusion rules.
\end{abstract}

\section{Introduction}

The Monster group $\mathbb{M}$ is the largest of the twenty-six sporadic groups and was first constructed by Griess \cite{Griess82} as the automorphism group of the Griess algebra $V_{\mathbb{M}}$, a 196,884-dimensional real commutative non-associative algebra. The Griess algebra is generated by idempotents known as $2A$-axes that are in bijection with the $2A$-involutions in the Monster group. 

Inspired by work of Sakuma \cite{Sakuma07} on vertex operator algebras, Ivanov \cite{Ivanov09} introduced Majorana theory as an axiomatisation of certain properties of the $2A$-axes of the Griess algebra. The objects at the centre of Majorana theory are known as Majorana algebras. The axioms of Majorana theory were further generalised by Hall, Rehren and Shpectorov \cite{HRS15} who introduced the definition of an axial algebra. 

An axial algebra is a commutative non-associative algebra generated by semisimple idempotents, known as axes, that all obey a fusion rule. This fusion rule governs the behaviour of the eigenvectors of the adjoint action of an axis on the whole of the algebra. Majorana algebras (including the Griess algebra) and Jordan algebras are both important examples of axial algebras.

Axial algebras that obey the Monster fusion rules are referred to here as axial algebras of Monster type. In this case, as in other examples of axial algebras, the fusion rule is $C_2$-graded which in particular means that from each axis we can construct an involution in the automorphism group of the algebra known as a Miyamoto involution. 

In Section \ref{sec:axial}, we give some basic definitions and results concerning axial algebras. In Sections \ref{sec:construction}, \ref{sec:4B} and \ref{sec:4A} we classify a certain class of axial algebras of Monster type that are generated by six axes whose Miyamoto involutions generate an elementary abelian group of order $4$. As part of this classification, we obtain a construction of a $12$-dimensional axial algebra $M_{4A}$ over the polynomial ring $\mathbb{R}[t]$. 

This is a significant new example in a number of ways. Firstly, axial algebras were introduced by Hall et al. \cite{HRS15} using an algebra geometric construction giving a variety of algebras. The variety of algebras constructed in \cite[Section 7]{HRS15} is $0$-dimensional. We also use this construction (as explained in Section \ref{sec:construction}). This means that we provide the first example of a variety of axial algebras that has positive dimension $1$. In particular, the indeterminate $t$ may take any value in $\mathbb{R}$, giving the first example of an infinite family $\{M(t)\}_{t \in \mathbb{R}}$ of axial algebras all with the same shape (as defined in Section \ref{sec:shape}). 

Futhermore, in Section \ref{sec:frobenius}, we show that the axial algebra $M(t)$ is a admits a positive definite Frobenius form if and only if $t \in (0, \frac{1}{6})$ and also obeys Norton's inequality if and only if $t \in [0, \frac{1}{6}]$. This shows that an axial algebra of Monster type that admits a Frobenius form does not necessarily obey Norton's inequality. However, the question of whether Norton's inequality, which is one of the Majorana axioms, is a consequence of the other axioms remains an open problem in Majorana theory. Our result suggests that this is indeed the case.  Moreover, this work shows that if $t \in (0, \frac{1}{6})$ then the algebra $M(t)$ is a Majorana algebra. This is the first example of an infinite family of Majorana algebras. 

This work also has implications for the computational methods that are being developed in the field. A computational approach to Majorana and axial algebras was begun by Seress \cite{Seress12} and has been continued by McInroy and Shpectorov \cite{MS18} and Pfeiffer and Whybrow \cite{PW18a}. The algorithms developed by these authors take the shape either of a Majorana algebra or an axial algebra of Monster type and attempt to construct the universal algebra with this shape over a field of characteristic $0$. 

This process is computationally expensive and many examples involving a large number of axes fail to complete, requiring too much time or memory. There are also a small but significant number of examples with a lower number of axes that have failed to complete. The family of axial algebras constructed in this paper comes from one such case. The existence of this infinite family of algebras shows that this example, as well as probably many others, lie outside the scope of the current computational methods. A potential improvement to the algorithms in \cite{MS18} and \cite{PW18a} would be to allow the construction over the field of rational functions in one or more indeterminates. 

Finally, in Section \ref{sec:4Afusion}, we study the $4A$ axes contained in the the algebra $M_{4A}$. We show that the eigenvectors of these axes in $M_{4A}$ obey a fusion rule. Moreover, if $t \notin \{1, 0, \frac{1}{2}, \frac{3}{8}\}$, then this fusion rule is $C_2 \times C_2$-graded. This gives an important new example of an infinite family of graded fusion rules. 

\section{Axial algebras}
\label{sec:axial}

In the following, we let $R$ be a commutative ring with identity and let $(V, \cdot)$ be a commutative (not necessarily associative) $R$-algebra. 

\subsection{Preliminaries}

\begin{defn}
A \emph{fusion rule} is a pair $(\mathcal{F}, *)$ such that $\mathcal{F} \subseteq R$ and $\mathcal{F} \times \mathcal{F} \rightarrow 2^{\mathcal{F}}$ is a symmetric map. 
\end{defn}

\begin{defn}
If $a \in V$ is an idempotent then for each $\lambda \in R$, we denote the eigenspace of the adjoint action of $a$ on $V$ with eigenvalue $\lambda$ by 
\[
V^{(a)}_{\lambda} = \{ v \in V \mid a \cdot v = \lambda v\}. 
\]
For each subset $\Lambda \subseteq R$, we define 
\[
V^{(a)}_{\Lambda} = \bigoplus_{\lambda \in \Lambda} V^{(a)}_{\lambda}
\]
with the convention that $V^{(a)}_{\emptyset} = \{0 \}$.
\end{defn}

\begin{defn}
If $a \in V$ is an idempotent and $(\mathcal{F}, *)$ is a fusion rule then $a$ is a $(\mathcal{F}, *)$-\emph{axis} if 
\begin{enumerate}[(i)]
\item  $\mathrm{ad}_a$ is semisimple and whenever $V^{(a)}_{\lambda}$ is non-zero, $\lambda \in \mathcal{F}$, i.e. the equation
\[
V = V_{\mathcal{F}}^{(a)} = \bigoplus_{\lambda \in \mathcal{F}} V^{(a)}_{\lambda} 
\]
holds;
\item for all $\lambda, \mu \in \mathcal{F}$ we have that
\[
V_{\lambda}^{(a)} \cdot V_{\mu}^{(a)} \subseteq V_{\lambda * \mu}^{(a)}.
\]
\end{enumerate}
\end{defn}

\begin{defn}
A $(\mathcal{F}, *)$-\emph{axial algebra} is a pair $(V, A)$ where $V$ is a commutative (not necessarily associative) $R$-algebra and $A \subseteq V$ is a generating set of $(\mathcal{F}, *)$-axes. 
\end{defn}

\begin{defn}
A $(\mathcal{F}, *)$-axis is \emph{primitive} if its $1$-eigenspace is one dimensional, i.e. $V_1^{(a)} = \langle a \rangle_R$. A $(\mathcal{F}, *)$-axial algebra $(V,A)$ is \emph{primitive} if $a$ is primitive for all $a \in A$. 
\end{defn}

\begin{defn}
Suppose that $T$ is an abelian group. A function $\mathrm{gr}: T \rightarrow P(\mathcal{F})$ is a $T$-\emph{grading} if the image of $\mathrm{gr}$, $\{\mathrm{gr}(t) \mid t \in T\}$, is a partition of $\mathcal{F}$, and for all $t, t' \in T$, if $\alpha \in \mathrm{gr}(t)$ and $\beta \in \mathrm{gr}(t')$ then $\alpha * \beta \subseteq \mathrm{gr}(tt')$.
\end{defn}

\begin{defn}
A \emph{Frobenius form} on a commutative algebra $V$ is a (non-zero) bilinear form $\langle \, , \, \rangle$ such that for all $u, v, w \in V$
\[
\langle u, v \cdot w \rangle  = \langle u \cdot v, w \rangle. 
\]
We say that an algebra is \emph{Frobenius} if it admits such a form.
\end{defn}

Henceforth, as is customary, we require that a Frobenius form $\langle \, , \, \rangle$ on an axial algebra $V$ satisfies $\langle a, a \rangle = 1$ for all axes $a \in V$.

\begin{prop}
\label{prop:orthogonality}
Let $(V, A)$ be a $(\mathcal{F}, *)$-axial algebra over a ring $R$ that admits a Frobenius form $\langle \, , \, \rangle$. Suppose that $R$ is an integral domain and suppose that $\lambda^{-1} \in R$ for all $\lambda \in \mathcal{F}$ such that $\lambda \neq 0$. Then the eigenspace decomposition $V = \bigoplus_{\lambda \in \mathcal{F}} V_{\lambda}^{(a)}$ is orthogonal with respect to $\langle \, , \, \rangle$ (i.e. $\langle u,v \rangle = 0$ for all $u \in V_{\mu}^{(a)}$, $v \in V_{\nu}^{(a)}$ with $\mu, \nu \in \mathcal{F}$ and $\mu \neq \nu$).
\end{prop}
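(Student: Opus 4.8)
The plan is to use the defining property of the Frobenius form, namely associativity with respect to multiplication, to relate the inner product of two eigenvectors to the eigenvalues of the adjoint action $\mathrm{ad}_a$. Let $u \in V_{\mu}^{(a)}$ and $v \in V_{\nu}^{(a)}$ with $\mu, \nu \in \mathcal{F}$ and $\mu \neq \nu$. The key observation is that $a \cdot u = \mu u$ and $a \cdot v = \nu v$, so applying the Frobenius identity with $a$ in the first slot gives a relation between $\langle a \cdot u, v \rangle$ and $\langle u, a \cdot v \rangle$.

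Concretely, I would compute $\langle a \cdot u, v \rangle = \langle u, a \cdot v \rangle$ directly from the Frobenius property (using commutativity of $V$ to write $a \cdot u = u \cdot a$ if needed to match the exact form of the axiom). Substituting the eigenvalue equations yields
\[
\mu \langle u, v \rangle = \langle \mu u, v \rangle = \langle a \cdot u, v \rangle = \langle u, a \cdot v \rangle = \langle u, \nu v \rangle = \nu \langle u, v \rangle,
\]
so that $(\mu - \nu) \langle u, v \rangle = 0$. Since $R$ is an integral domain and $\mu \neq \nu$, the element $\mu - \nu$ is a nonzero element of $R$, hence not a zero divisor, and we may cancel it to conclude $\langle u, v \rangle = 0$.

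The role of the hypothesis that $R$ is an integral domain is precisely to license this cancellation of the nonzero factor $\mu - \nu$; without it, $(\mu-\nu)\langle u,v\rangle = 0$ would not force $\langle u,v\rangle = 0$. The assumption that nonzero eigenvalues are invertible is not actually needed for the bare orthogonality of distinct eigenspaces by this argument, so I would double-check whether it is used to guarantee that the eigenspace decomposition is a genuine direct sum (so that the statement $\langle u, v \rangle = 0$ for all pairs $u, v$ extends to orthogonality of the summands) or whether it is carried over for later use; in any case it is harmless here.

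There is no serious obstacle: the argument is the standard self-adjointness-of-$\mathrm{ad}_a$ computation, transcribed to the ring-theoretic setting. The only point requiring minor care is matching the precise placement of $a$ in the Frobenius identity $\langle u, v \cdot w \rangle = \langle u \cdot v, w \rangle$ with the commutativity of the algebra, so that $\langle a \cdot u, v \rangle$ and $\langle u, a \cdot v \rangle$ are both recognized as instances of the same associativity relation; this is where I would be most careful to keep the bilinearity and symmetry bookkeeping correct.
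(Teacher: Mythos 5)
Your proof is correct, but it follows a genuinely different route from the paper's. The paper's argument divides by the eigenvalues: it splits into two cases ($\mu = 0$ with $\nu \neq 0$, and $\mu, \nu$ both nonzero), writes $\langle u, v \rangle = \frac{1}{\nu}\langle u, a \cdot v \rangle = \frac{1}{\nu}\langle a \cdot u, v \rangle$ and, in the second case, compares $\frac{1}{\mu}\langle a \cdot u, v \rangle$ with $\frac{1}{\nu}\langle a \cdot u, v \rangle$ to force $\langle a\cdot u, v\rangle = \langle u,v \rangle = 0$. This is precisely why the hypothesis $\lambda^{-1} \in R$ for all nonzero $\lambda \in \mathcal{F}$ appears in the statement. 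Your version multiplies through instead of dividing: from $\mu \langle u, v \rangle = \langle a \cdot u, v \rangle = \langle u, a \cdot v \rangle = \nu \langle u, v \rangle$ you get $(\mu - \nu)\langle u, v \rangle = 0$ and cancel the nonzero factor $\mu - \nu$ in the integral domain $R$. This buys two things: it eliminates the case analysis on whether an eigenvalue is zero, and, as you correctly observe, it never uses the invertibility hypothesis, so it proves the proposition under strictly weaker assumptions. Your side remark is also accurate on the other point: invertibility is not what guarantees the direct-sum decomposition either, since that is built into the definition of an axis via semisimplicity of $\mathrm{ad}_a$; the hypothesis is carried in the statement because the paper's own proof divides by eigenvalues (and it is harmless in the intended applications, where the eigenvalues lie in a subfield of $R$).
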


\begin{proof}
Let $u \in V_{\mu}^{(a)}$, $v \in V_{\nu}^{(a)}$ with $\mu \neq \nu$ as in the hypothesis. Suppose first that $\mu = 0$ and $\nu \neq 0$. Then
\[
\langle u, v \rangle = \frac{1}{\nu}\langle u, a \cdot v \rangle = \frac{1}{\nu}\langle a \cdot u, v \rangle = 0 
\]
If both $\mu$ and $\nu$ are non-zero then 
\[
\frac{1}{\mu}\langle a \cdot u, v \rangle = \langle u,v \rangle = \frac{1}{\nu}\langle u, a \cdot v \rangle = \frac{1}{\nu}\langle a \cdot u, v \rangle
\]
and so, as $\mu \neq \nu$, $\langle a \cdot u, v \rangle = \langle u,v \rangle = 0$. 
\end{proof}

\begin{lem}
\label{lem:projection}
Let $(V,A)$ be as in Proposition \ref{prop:orthogonality}, let $a \in A$ be a primitive axis and let $v \in V$. Then the projection of $v$ onto $\langle a \rangle$ is equal to $\langle a, v \rangle$.
\end{lem}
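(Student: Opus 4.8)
The plan is to combine the eigenspace decomposition coming from semisimplicity of $\mathrm{ad}_a$ with primitivity of $a$, and then to extract the relevant coefficient by pairing against $a$ with the Frobenius form. First I would split the fusion set as $\mathcal{F} = \{1\} \cup (\mathcal{F} \setminus \{1\})$ and use the direct sum decomposition $V = V_1^{(a)} \oplus V_{\mathcal{F} \setminus \{1\}}^{(a)}$, which is genuine because $a$ is an axis and hence $\mathrm{ad}_a$ is semisimple. Since $a$ is primitive we have $V_1^{(a)} = \langle a \rangle_R$, so the projection of $v$ onto $\langle a \rangle$ is precisely its $V_1^{(a)}$-component. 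Accordingly I would write
\[
v = \alpha a + w, \qquad \alpha \in R, \quad w \in V_{\mathcal{F} \setminus \{1\}}^{(a)},
\]
so that the projection we must identify is $\alpha a$; it therefore suffices to show $\alpha = \langle a, v \rangle$, which identifies the projection $\alpha a$ with $\langle a, v\rangle a$ as in the statement.

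Next I would apply the Frobenius form, pairing $a$ against both sides of the decomposition to obtain
\[
\langle a, v \rangle = \alpha \langle a, a \rangle + \langle a, w \rangle.
\]
Two facts then finish the computation. The normalisation convention on the Frobenius form gives $\langle a, a \rangle = 1$. The cross term vanishes by orthogonality: since $a \in V_1^{(a)}$ and $w$ lies in the sum of the eigenspaces $V_\lambda^{(a)}$ with $\lambda \neq 1$, Proposition \ref{prop:orthogonality}, whose hypotheses $(V,A)$ satisfies by assumption, yields $\langle a, w \rangle = 0$. Substituting both gives $\langle a, v \rangle = \alpha$, so the projection of $v$ onto $\langle a \rangle$ equals $\langle a, v \rangle a$, as required.

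There is very little genuine obstruction here; the only points needing care are bookkeeping ones. One must verify that the hypotheses of Proposition \ref{prop:orthogonality} are in force so that distinct eigenspaces are orthogonal, which is guaranteed by taking $(V,A)$ as in that proposition (that is, $R$ an integral domain in which every nonzero element of $\mathcal{F}$ is invertible). One should also note that, although $w$ is a sum of components from several distinct eigenspaces, it is still orthogonal to $a$: orthogonality holds eigenspace-by-eigenspace, and the eigenvalue $1$ differs from every eigenvalue occurring in $w$. With these remarks the argument is immediate.
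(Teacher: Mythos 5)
Your proof is correct and follows essentially the same route as the paper: decompose $v$ into $\mathrm{ad}_a$-eigencomponents, use primitivity to identify the $1$-component as a multiple of $a$, kill the cross terms via Proposition \ref{prop:orthogonality}, and use the normalisation $\langle a, a \rangle = 1$ to read off the coefficient. The only difference is cosmetic (you group all $\lambda \neq 1$ components into a single vector $w$, while the paper keeps the full eigenspace sum), so nothing further is needed.
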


\begin{proof}
As $V$ is a axial algebra, we can write
\[
v = \bigoplus_{\lambda \in \mathcal{F}} v_\lambda
\]
where $v_\lambda \in V_{\lambda}^{(a)}$. As $a$ is primitive, $a_1 = \phi a$ for some $\phi \in R$. From Proposition \ref{prop:orthogonality}, if $\lambda \neq 1$ then $\langle a, v_\lambda \rangle = 0$ and so 
\[
\langle a, v \rangle = \langle a, \phi a \rangle = \phi
\] 
as required.
\end{proof} 

\subsection{Axial algebras of Monster type}

Henceforth, we assume that the ring $R$ is an integral domain that contains a subfield $\mathbf{k}$ of characteristic $0$ such that $\frac{1}{4}, \frac{1}{32} \in \mathbf{k}$.
\begin{defn}
The \emph{Monster} or \emph{Majorana} fusion rule $(\mathcal{M}, *)$ is given by the following table where the $(\lambda, \mu)$-entry gives the entries of the set $\lambda * \mu$. As it customary, we omit the brackets from the sets in this table.
\begin{center}
\def\arraystretch{1.5}
\begin{tabular}{>{$}c<{$}|>{$}c<{$}>{$}c<{$}>{$}c<{$}>{$}c<{$}}
  & 1 & 0 & \frac{1}{4} & \frac{1}{32} \\ \hline
1 & 1 & \emptyset  & \frac{1}{4} & \frac{1}{32} \\
0 & \emptyset  & 0 & \frac{1}{4} & \frac{1}{32} \\
\frac{1}{4} & \frac{1}{4} & \frac{1}{4} & 1,0 & \frac{1}{32} \\
\frac{1}{32} & \frac{1}{32} & \frac{1}{32} & \frac{1}{32} & 1,0,\frac{1}{4} 
\end{tabular}
\end{center}
A $(\mathcal{M}, *)$-axial algebra is called an axial algebra \emph{of Monster type}.
\end{defn}

The $2A$-axes of the Griess algebra $V_{\mathbb{M}}$ are known to obey the Monster fusion rule. As the $2A$-axes generate $V_{\mathbb{M}}$, it is an example of an axial algebra of Monster type.

The Monster fusion rule admits a $C_2$-grading $\mathrm{gr}: \langle a \mid a^2 = 1 \rangle \rightarrow P(\mathcal{M})$ such that $\mathrm{gr}(1) = \{1, 0, \frac{1}{2}\}$ and $\mathrm{gr}(a) = \{\frac{1}{32} \}$. From this grading we can construct certain involutions in the automorphism group of the algebra, as described below. 

\begin{defn}
Suppose that $(V, A)$ is an axial algebra of Monster type. Then for each $a \in A$, we can define a map $\tau(a) \in \mathrm{GL}(V)$ such that 
\begin{align*}
\tau(a): v \mapsto
\begin{cases}
v &\textrm{ if } v \in V_1^{(a)} \oplus V_0^{(a)} \oplus V_{\frac{1}{4}}^{(a)} \\
- v &\textrm{ if } v \in V_{\frac{1}{32}}^{(a)}. 
\end{cases}
\end{align*}
Then $\tau(a)$ is called a Miyamoto involution.
\end{defn}

In the case of the Griess algebra, the Miyamoto involutions $\tau(a)$, where $a$ is a $2A$-axis, form the $2A$ conjugacy class of involutions of the Monster group.

\begin{prop}
\label{prop:algproduct}
Suppose that $(V, A)$ is an axial algebra of Monster type and that $a \in A$. Then the Miyamoto involution $\tau(a)$ preserves the algebra product on $V$, i.e. $u^{\tau(a)} \cdot v ^{\tau(a)} = (u \cdot v)^{\tau(a)}$ for all $u, v \in V$. 
\end{prop}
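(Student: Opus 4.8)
The plan is to exploit the $C_2$-grading of the Monster fusion rule that was recorded just before the statement. I would begin by splitting $V$ according to this grading. Writing $g$ for the non-identity element of $C_2$, the grading $\mathrm{gr}(1) = \{1, 0, \frac{1}{4}\}$ and $\mathrm{gr}(g) = \{\frac{1}{32}\}$ yields a decomposition $V = V_+ \oplus V_-$, where $V_+ = V_1^{(a)} \oplus V_0^{(a)} \oplus V_{\frac{1}{4}}^{(a)}$ and $V_- = V_{\frac{1}{32}}^{(a)}$. By its very definition, $\tau(a)$ acts as the identity on $V_+$ and as multiplication by $-1$ on $V_-$.

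Next I would read off the multiplicative behaviour of these two pieces from the fusion table. Since the axis axiom gives $V_\lambda^{(a)} \cdot V_\mu^{(a)} \subseteq V_{\lambda * \mu}^{(a)}$ and the grading satisfies $\mathrm{gr}(s) * \mathrm{gr}(s') \subseteq \mathrm{gr}(ss')$, we obtain the three containments $V_+ \cdot V_+ \subseteq V_+$, $V_+ \cdot V_- \subseteq V_-$ and $V_- \cdot V_- \subseteq V_+$. These are precisely the relations that make $\tau(a)$ interact correctly with products.

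Since $\tau(a)$ is linear and the product is bilinear, it suffices to verify the homomorphism identity $(u \cdot v)^{\tau(a)} = u^{\tau(a)} \cdot v^{\tau(a)}$ when $u$ and $v$ each lie in one of the homogeneous pieces $V_{\pm}$; the general case then follows by expanding arbitrary $u, v$ as sums of such elements. There are three cases. If $u, v \in V_+$ then both sides equal $u \cdot v$. If $u \in V_+$ and $v \in V_-$ (or vice versa) then $u \cdot v \in V_-$, so the left-hand side is $-(u \cdot v)$, which matches $u \cdot (-v)$ on the right. If $u, v \in V_-$ then $u \cdot v \in V_+$, so the left-hand side is $u \cdot v$, matching $(-u) \cdot (-v)$ on the right.

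This argument is essentially a formal consequence of the $C_2$-grading, so I do not anticipate a genuine obstacle; the only point requiring care is the reduction to homogeneous elements, which must invoke bilinearity of the product together with the fact that $\tau(a)$ is defined as a linear map taking eigenvalues $\pm 1$ on the graded summands. In particular, no associativity or further structural properties of $V$ are needed.
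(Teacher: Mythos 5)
Your proof is correct and is exactly the argument the paper intends: the paper's proof is the one-line remark that the claim ``follows directly from the definition of $\tau(a)$ and the Monster fusion rule,'' and your case analysis via the decomposition $V = V_+ \oplus V_-$ is precisely the expansion of that remark. One small point in your favour: you correctly use $\mathrm{gr}(1) = \{1, 0, \frac{1}{4}\}$, silently fixing the paper's typo where the grading is stated with $\frac{1}{2}$ in place of $\frac{1}{4}$.
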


\begin{proof}
This follows directly from the definition of $\tau(a)$ and the Monster fusion rule. 
\end{proof}

\begin{prop}
\label{prop:form}
Let $(V, A)$ be an axial algebra of Monster type over a ring $R$ that admits a Frobenius form $\langle \, , \, \rangle$. Suppose that $R$ is an integral domain and suppose that $\lambda^{-1} \in R$ for all $\lambda \in \mathcal{F}$ such that $\lambda \neq 0$. If $a \in A$ then $\tau(a)$ also preserves the Frobenius form, i.e. $\langle u^{\tau(a)}, v ^{\tau(a)} \rangle = \langle u,v  \rangle$ for all $u, v \in V$. 
\end{prop}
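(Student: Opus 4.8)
The plan is to reduce the claim to a statement about pairs of eigenvectors and then to invoke the orthogonality of the eigenspace decomposition established in Proposition \ref{prop:orthogonality}. Since the Frobenius form is bilinear and, because $a$ is an axis, $V$ decomposes as $V = \bigoplus_{\lambda \in \mathcal{M}} V_\lambda^{(a)}$, it suffices to verify the identity $\langle u^{\tau(a)}, v^{\tau(a)} \rangle = \langle u, v \rangle$ in the case where $u \in V_\mu^{(a)}$ and $v \in V_\nu^{(a)}$ for eigenvalues $\mu, \nu \in \mathcal{M}$. The first observation I would record is that $\tau(a)$ acts as a scalar on each eigenspace: by its very definition it fixes $V_1^{(a)} \oplus V_0^{(a)} \oplus V_{\frac{1}{4}}^{(a)}$ pointwise and multiplies $V_{\frac{1}{32}}^{(a)}$ by $-1$. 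In particular $u^{\tau(a)}$ is again an eigenvector lying in $V_\mu^{(a)}$ and $v^{\tau(a)} \in V_\nu^{(a)}$, so $\tau(a)$ preserves the eigenspace decomposition.

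Next I would split into two cases according to whether $\mu = \nu$. If $\mu \neq \nu$, then Proposition \ref{prop:orthogonality} gives at once both $\langle u, v \rangle = 0$ and $\langle u^{\tau(a)}, v^{\tau(a)} \rangle = 0$, since $u^{\tau(a)}$ and $v^{\tau(a)}$ still lie in distinct eigenspaces; here I must check that the hypotheses of that proposition are met, namely that $R$ is an integral domain and that every nonzero $\lambda \in \mathcal{M} = \{1, 0, \frac{1}{4}, \frac{1}{32}\}$ is invertible in $R$, which holds because $\frac{1}{4}, \frac{1}{32} \in \mathbf{k} \subseteq R$ by the standing assumption. If instead $\mu = \nu$, then $u$ and $v$ lie in the same eigenspace and so are scaled by $\tau(a)$ by a common sign $\epsilon \in \{+1, -1\}$, whence $\langle u^{\tau(a)}, v^{\tau(a)} \rangle = \langle \epsilon u, \epsilon v \rangle = \epsilon^2 \langle u, v \rangle = \langle u, v \rangle$.

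I do not expect a genuine obstacle here: once orthogonality is available, the result follows formally from the fact that $\tau(a)$ acts diagonally with eigenvalues $\pm 1$ on the eigenspace decomposition. The sign is killed on the diagonal blocks because $(\pm 1)^2 = 1$, while the off-diagonal blocks vanish by orthogonality, and extending from eigenvectors to arbitrary $u, v \in V$ is immediate by bilinearity. The only point warranting a moment's attention is the verification that the invertibility hypothesis of Proposition \ref{prop:orthogonality} is indeed satisfied in the Monster-type setting, which is precisely what the assumption $\frac{1}{4}, \frac{1}{32} \in \mathbf{k}$ guarantees.
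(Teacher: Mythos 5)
Your proof is correct and follows essentially the same route as the paper: both decompose $u$ and $v$ into their eigenspace components, use Proposition \ref{prop:orthogonality} to kill the cross-terms, and observe that the $\pm 1$ action of $\tau(a)$ squares away on the diagonal terms. Your reduction to pairs of eigenvectors with a case split on $\mu = \nu$ versus $\mu \neq \nu$ is just a reorganization of the paper's direct expansion of the bilinear form.
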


\begin{proof}
Suppose that $u, v \in V$. We can write $u = u_1 + u_0 + u_{\frac{1}{4}} + u_{\frac{1}{32}}$ and $v = v_1 + v_0 + v_{\frac{1}{4}} + v_{\frac{1}{32}}$ where $u_{\lambda}, v_{\lambda} \in V_{\lambda}^{(a)}$ for $\lambda \in \{1, 0, \frac{1}{4}, \frac{1}{32}\}$. Then using Proposition \ref{prop:orthogonality}, we obtain
\begin{align*}
\langle u^{\tau(a)}, v^{\tau(a)} \rangle &= \langle u_1 + u_0 + u_{\frac{1}{4}} - u_{\frac{1}{32}}, v_1 + v_0 + v_{\frac{1}{4}} - v_{\frac{1}{32}} \rangle \\
    &= \langle u_1, v_1 \rangle + \langle u_0, v_0 \rangle + \langle u_{\frac{1}{4}}, v_{\frac{1}{4}} \rangle + \langle u_{\frac{1}{32}}, v_{\frac{1}{32}} \rangle
\end{align*}
and 
\begin{align*}
\langle u, v \rangle &= \langle u_1 + u_0 + u_{\frac{1}{4}} + u_{\frac{1}{32}}, v_1 + v_0 + v_{\frac{1}{4}} + v_{\frac{1}{32}} \rangle \\
    &= \langle u_1, v_1 \rangle + \langle u_0, v_0 \rangle + \langle u_{\frac{1}{4}}, v_{\frac{1}{4}} \rangle + \langle u_{\frac{1}{32}}, v_{\frac{1}{32}} \rangle.
\end{align*}
Thus $\langle u^{\tau(a)}, v ^{\tau(a)} \rangle = \langle u,v \rangle$ as required.
\end{proof}

\subsection{Dihedral axial algebras of Monster type}

\begin{defn}
Suppose that $(V,A)$ is an axial algebra and that $a, b \in A$ and $ a \neq b$. Then the subalgebra $U$ of $V$ generated by $a$ and $b$ is called a \emph{dihedral algebra}.
\end{defn}

The dihedral algebras of the Griess algebra (i.e. those generated by two distinct $2A$-axes) were studied by Conway and Norton \cite{Conway84} and were shown to have eight possible isomorphism types. A milestone in Majorana theory was reached when Ivanov, Pasechnik, Seress and Shpectorov \cite{IPSS10} proved that a Majorana algebra generated by two Majorana axes is isomorphic to one of the dihedral subalgebras of the Griess algebra. This reproved Conway and Norton's classification of the dihedral subalgebras of the Griess algebra, providing a foundation for the theory of Majorana algebras. 

In 2015, Hall, Rehren and Shpectorov reproved this result for primitive axial algebras of Monster type over a field of characteristic $0$ that admit a Frobenius form. 

\begin{thm}[{\cite[Theorem 1.2]{HRS15}}]
\label{thm:IPSS10}
Let $\mathbf{k}$ be a field such that $\mathrm{char}(\mathbf{k}) = 0$. Suppose that $(V,A)$ is a primitive axial algebra of Monster type over $\mathbf{k}$ that admits a Frobenius form. Let $a_0, a_1 \in A$ such that $a_0 \neq a_1$ and let $U$ be the subalgebra of $V$ generated by $a_0$ and $a_1$. Finally, let $\rho = \tau(a_0)\tau(a_1)$. Then
\begin{enumerate}[(i)]
\item the subalgebra $U = \langle \langle a_0, a_1 \rangle \rangle$ is isomorphic to a dihedral algebra of type $NX$, the structure of which is given in Table \ref{tab:IPSS10};
\item for $i \in \mathbb{Z}$ and $\epsilon \in \{0,1\}$, the image of $a_{\epsilon}$ under the $i$-th power of $\rho$, which we denote $a_{2i+\epsilon}$, is a $(\mathcal{M}, *)$-axis and $\tau(a_{2i + \epsilon}) = \rho^{-i}\tau_{\epsilon}\rho^i$.
\end{enumerate}
\end{thm}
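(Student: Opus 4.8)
The plan is to prove the two parts separately: part (ii) follows quickly from the equivariance of the Miyamoto construction, while part (i) is the genuine classification and requires analysing the structure constants of the dihedral algebra $U$.

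First I would prove part (ii). The key observation is that any automorphism $g$ of $V$ preserving the algebra product sends the $\lambda$-eigenspace of an axis $a$ onto the $\lambda$-eigenspace of $a^g$, so that $V_\lambda^{(a^g)} = (V_\lambda^{(a)})^g$; since $\mathrm{ad}_a$ is semisimple so is $\mathrm{ad}_{a^g}$, and the fusion rule is inherited verbatim, whence $a^g$ is again a $(\mathcal{M}, *)$-axis. Comparing the action of $\tau(a^g)$ and of $g^{-1}\tau(a)g$ on each eigenspace then yields the equivariance identity $\tau(a^g) = g^{-1}\tau(a)g$. By Proposition \ref{prop:algproduct} the maps $\tau_0 = \tau(a_0)$ and $\tau_1 = \tau(a_1)$ are such automorphisms, hence so is every power of $\rho = \tau_0\tau_1$. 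Taking $g = \rho^i$ and $a = a_\epsilon$ gives that $a_{2i+\epsilon} = a_\epsilon^{\rho^i}$ is an axis with $\tau(a_{2i+\epsilon}) = \rho^{-i}\tau_\epsilon\rho^i$, which is exactly part (ii).

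For part (i) I would first fix coordinates inside $U$. Writing $\phi_k = \langle a_0, a_k\rangle$ for the Frobenius inner products, Proposition \ref{prop:form} shows the form is invariant under $D = \langle \tau_0, \tau_1\rangle$, so $\langle a_i, a_j\rangle$ depends only on $|i-j|$. Using primitivity together with Lemma \ref{lem:projection}, I would decompose $a_1$ into $a_0$-eigenvectors as $a_1 = \phi_1 a_0 + v_0 + v_{1/4} + v_{1/32}$ with $v_\lambda \in V_\lambda^{(a_0)}$. Since the $\frac{1}{32}$-eigenspace is precisely the $(-1)$-eigenspace of $\tau_0$ and $a_1^{\tau_0} = a_{-1}$, the odd part is forced to be $v_{1/32} = \tfrac{1}{2}(a_1 - a_{-1})$, while $\phi_1 a_0 + v_0 + v_{1/4} = \tfrac{1}{2}(a_1 + a_{-1})$. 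The product $a_0 \cdot a_1 = \phi_1 a_0 + \tfrac{1}{4} v_{1/4} + \tfrac{1}{32}v_{1/32}$ is then expressible through the axes once $v_0$ and $v_{1/4}$ are separated, which I would achieve by pairing against $a_0$ and $a_2$ and invoking Proposition \ref{prop:orthogonality}.

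The heart of the proof is to convert the fusion rule into polynomial constraints on the $\phi_k$ and thereby close up the algebra. Applying $V_{1/4}^{(a_0)}\cdot V_{1/4}^{(a_0)} \subseteq V_0^{(a_0)}\oplus V_1^{(a_0)}$ and $V_{1/32}^{(a_0)}\cdot V_{1/32}^{(a_0)} \subseteq V_1^{(a_0)}\oplus V_0^{(a_0)}\oplus V_{1/4}^{(a_0)}$ to the expressions above, projecting back onto the axes via Lemma \ref{lem:projection} and using associativity of the form, I would derive a linear recurrence relating consecutive $\phi_k$. Imposing that \emph{every} $a_k$ be a primitive axis carrying the full Monster spectrum is a rigid constraint that forces $U$ to be finite-dimensional, equivalently $\rho$ to have finite order, bounding the period and leaving only finitely many candidate types. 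I expect this finiteness-and-enumeration step to be the main obstacle: showing that the polynomial system admits only finitely many solutions and that these are exhausted by the eight Norton--Sakuma types $NX$, matching each to a row of Table \ref{tab:IPSS10}. Ruling out an infinite dihedral group and verifying that no spurious solution survives the fusion and Frobenius constraints is the delicate part, and is where the argument of Ivanov, Pasechnik, Seress and Shpectorov relied on substantial computation; the conceptual streamlining of this enumeration over an arbitrary characteristic-$0$ field is the real content of the theorem.
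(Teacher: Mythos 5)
This theorem is not proved in the paper at all: it is imported verbatim as \cite[Theorem 1.2]{HRS15} (the axial-algebra form of the Norton--Sakuma theorem), so there is no internal argument to measure you against, only the published proofs in \cite{HRS15} and \cite{IPSS10}. Judged on its own terms, your proposal splits cleanly in two. Part (ii) is correct and complete: an algebra automorphism $g$ carries the eigenspace decomposition of an axis $a$ to that of $a^g$, hence carries $(\mathcal{M},*)$-axes to $(\mathcal{M},*)$-axes and satisfies $\tau(a^g) = g^{-1}\tau(a)g$; combining this with Proposition \ref{prop:algproduct} applied to $\tau(a_0)$ and $\tau(a_1)$, and taking $g = \rho^i$, is exactly the standard argument.

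Part (i), however, is a plan rather than a proof, and the step you yourself flag as ``the main obstacle'' is the entire content of the theorem. Your opening moves are the standard ones: writing $a_1 = \phi_1 a_0 + v_0 + v_{\frac{1}{4}} + v_{\frac{1}{32}}$ via primitivity and Lemma \ref{lem:projection}, extracting $v_{\frac{1}{32}} = \frac{1}{2}(a_1 - a_{-1})$ from $a_1^{\tau_0} = a_{-1}$, and projecting fusion-rule products back onto $\langle a_0 \rangle$ using Proposition \ref{prop:orthogonality}. But from there the published proofs require much more than ``a linear recurrence relating consecutive $\phi_k$'': they need the resurrection principle (recovering even eigencomponents from products involving $\frac{1}{32}$-vectors), an explicit finite spanning set for $U$ (essentially $a_{-2}, \dots, a_3$ plus at most one additional vector), a proof that $\rho$ has order at most $6$, and a case-by-case determination of structure constants that yields precisely the eight types of Table \ref{tab:IPSS10}, including sporadic values such as $\langle a_0, a_1 \rangle = \frac{13}{2^8}$ in type $3A$. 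Your sketch asserts that primitivity ``forces $U$ to be finite-dimensional'' and that the resulting polynomial system ``admits only finitely many solutions,'' but supplies no mechanism for either claim; finite-dimensionality in particular does not follow from each $a_k$ being a primitive axis, it follows from the fusion-rule spanning argument, which is where all the work lies. So the proposal establishes (ii), correctly locates the difficulty in (i), but does not resolve it.
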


We note that Table \ref{tab:IPSS10} does not show all values of the algebra product and Frobenius form on the dihedral algebras. Those which are omitted can be recovered from the action of the group $D$ and the symmetry between $a_0$ and $a_1$. The following useful lemma also follows from these symmetries. 

\begin{prop}
\label{prop:axes}
Let $\mathbf{k}$ be a field such that $\mathrm{char}(\mathbf{k}) = 0$. Suppose that $(V,A)$ is a primitive axial algebra of Monster type over $\mathbf{k}$ that admits a Frobenius form. Let $a_0, a_1 \in A$ such that $a_0 \neq a_1$ and let $U$ and $a_{2i + \epsilon}$ for $\epsilon \in \{0, 1\}$ and $i \in \mathbb{Z} \backslash \{0\}$ be as in Theorem \ref{thm:IPSS10}.

If $U$ is of type $3A$ or $4A$ then $U$ contains the additional basis vector $u_{\rho(a_0, a_1)}$ or $v_{\rho(a_0, a_1)}$. These vectors obey the following equalities
\[
u_{\rho(a_0, a_1)} = u_{\rho(a_0, a_{-1})} \mbox{ and } v_{\rho(a_0, a_1)} = v_{\rho(a_0, a_{-1})}.
\]
Similarly, if $U$ is of type $5A$ then $U$ contains an additional basis vector $w_{\rho(a_0, a_1)}$ that obeys the following equalities
\begin{equation}
\label{eq:5Aaxes}
w_{\rho(a_0, a_1)} = - w_{\rho(a_0, a_2)} = - w_{\rho(a_0, a_{-2})} = w_{\rho(a_0, a_{-1})}.
\end{equation}

Moreover, if $U := \langle \langle a_0, a_1 \rangle \rangle$ is a dihedral algebra of type $3A$, $4A$ or $5A$ then $u_{\rho(a_0, a_1)} = u_{\rho(a_1, a_0)}$, $v_{\rho(a_0, a_1)} = v_{\rho(a_1, a_0)}$ or $w_{\rho(a_0, a_1)} = w_{\rho(a_1, a_0)}$ respectively.
\end{prop}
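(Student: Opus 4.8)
The plan is to derive all of these equalities from two ingredients: the explicit structure of the dihedral algebra $U$ recorded in Table~\ref{tab:IPSS10}, and the fact (Proposition~\ref{prop:algproduct}) that each element of the group $D = \langle \tau(a_0), \tau(a_1)\rangle$ acts as an algebra automorphism of $U$ permuting the axes $a_i$. The first step is to record the naturality of the extra vector: since $u_{\rho(b,c)}$ (respectively $v_{\rho(b,c)}, w_{\rho(b,c)}$) is constructed from the ordered generating pair $(b,c)$ by the formulae of Table~\ref{tab:IPSS10}, and an automorphism commutes with these formulae, for every $g \in D$ we have $\bigl(u_{\rho(b,c)}\bigr)^{g} = u_{\rho(b^{g}, c^{g})}$, and likewise for $v$ and $w$, where $\rho(b^g, c^g) = \tau(b^g)\tau(c^g) = g^{-1}\rho(b,c)\,g$ by Theorem~\ref{thm:IPSS10}(ii). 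I would then record the relevant reindexings: writing $\rho = \rho(a_0,a_1)$, the reflection $\tau(a_0)$ fixes $a_0$ and sends $a_1 \mapsto a_{-1}$, and one computes $\rho(a_0,a_{-1}) = \rho^{-1}$, $\rho(a_1,a_0) = \rho^{-1}$, $\rho(a_0,a_2) = \rho^{2}$ and $\rho(a_0,a_{-2}) = \rho^{-2}$.

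For types $3A$ and $4A$ the argument is then immediate. In these types the extra vector lies in the $\tau(a_0)$-fixed subspace $V_1^{(a_0)} \oplus V_0^{(a_0)} \oplus V_{\frac14}^{(a_0)}$ of $U$ (as one checks from Table~\ref{tab:IPSS10}), so $\bigl(u_{\rho(a_0,a_1)}\bigr)^{\tau(a_0)} = u_{\rho(a_0,a_1)}$; comparing with the naturality identity $\bigl(u_{\rho(a_0,a_1)}\bigr)^{\tau(a_0)} = u_{\rho(a_0, a_{-1})}$ yields $u_{\rho(a_0,a_1)} = u_{\rho(a_0,a_{-1})}$, and identically for $v$. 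The remaining equality $u_{\rho(a_0,a_1)} = u_{\rho(a_1,a_0)}$ is just commutativity of the product: the extra vector is a summand of $a_0 \cdot a_1 = a_1 \cdot a_0$, hence is unchanged when the two axes are swapped (equivalently, $\rho(a_1,a_0) = \rho^{-1}$ and orientation independence has already been proved).

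The case $5A$ is where the work lies, because of the signs. The orientation relations $w_{\rho(a_0,a_1)} = w_{\rho(a_0,a_{-1})} = w_{\rho(a_1,a_0)}$ follow exactly as above. The new point is $w_{\rho(a_0,a_2)} = -\,w_{\rho(a_0,a_1)}$, and here I expect the main obstacle: no element of $D$ fixes $a_0$ while sending $a_1$ to $a_2$ (the stabiliser of $a_0$ in $D$ is $\langle \tau(a_0)\rangle$, which sends $a_1 \mapsto a_{-1}$), so the sign cannot be produced by the automorphism action alone. Instead I would read it directly off Table~\ref{tab:IPSS10}: the pair $(a_0,a_2)$ also generates $U$, with rotation $\rho(a_0,a_2) = \rho^2$, and in the explicit products the extra vector enters $a_0 \cdot a_2$ with the opposite sign to the one with which it enters $a_0 \cdot a_1$, giving $w_{\rho(a_0,a_2)} = -w_{\rho(a_0,a_1)}$. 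Orientation independence applied to the pair $(a_0,a_2)$ (via naturality under $\tau(a_0)$, using $\rho(a_0,a_{-2}) = \rho^{-2} = (\rho^2)^{-1}$) then gives $w_{\rho(a_0,a_{-2})} = w_{\rho(a_0,a_2)}$, and chaining these equalities produces \eqref{eq:5Aaxes}. Throughout, the only delicate part is consistent sign-bookkeeping in the $5A$ products, together with the observation that the change of generators $(a_0,a_1) \rightsquigarrow (a_0,a_2)$ must be handled by the table rather than by the group $D$.
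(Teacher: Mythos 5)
Your proof is correct and follows essentially the same route as the paper: the paper gives no detailed argument, stating only that the proposition ``follows from these symmetries'' --- the action of $D = \langle\tau(a_0),\tau(a_1)\rangle$ together with the symmetry between $a_0$ and $a_1$ --- and your argument (naturality of the extra basis vector under automorphisms, its invariance under $\tau(a_0)$ since it lies in the even part, and the generator-swap read directly from Table \ref{tab:IPSS10}) is precisely a careful realization of that claim. In particular, your handling of the $5A$ sign, obtaining $w_{\rho(a_0,a_2)} = -w_{\rho(a_0,a_1)}$ by comparing the two displayed products in Table \ref{tab:IPSS10} rather than from the $D$-action (which cannot produce it, as you note), is exactly how the table is meant to be used.
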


\begin{defn}
Using notation as in Proposition \ref{prop:axes} above, we refer to the vectors $u_{\rho(a_0, a_1)}$, $v_{\rho(a_0, a_1)}$ and $w_{\rho(a_0, a_1)}$ as $3A$, $4A$ and $5A$-\emph{axes} respectively.
\end{defn}

\begin{table}
\begin{center}
\vspace{0.35cm}
\noindent
\begin{tabular}{|c|c|c|}
\hline
&&\\
 Type & Basis & Products and angles \\
&&\\
\hline
&&\\
&& $a_0 \cdot a_1=\frac{1}{2^3}(a_0+a_1-a_{\rho(a_0, a_1)}),~a_0 \cdot a_{\rho(a_0, a_1)}=\frac{1}{2^3}(a_0+a_{\rho(a_0, a_1)}-a_1)$ \\
2A & $a_0,a_1,a_{\rho(a_0, a_1)}$ & $a_{\rho(a_0, a_1)} \cdot a_{\rho(a_0, a_1)} = a_{\rho(a_0, a_1)}$ \\ 
&&$(a_0,a_1)=(a_0,a_{\rho(a_0, a_1)})=(a_{\rho(a_0, a_1)},a_{\rho(a_0, a_1)})=\frac{1}{2^3}$\\
&& \\ 
2B & $a_0,a_1$ &$a_0 \cdot a_1=0$,~$(a_0,a_1)=0$ \\
&&\\
&  &$a_0 \cdot a_1=\frac{1}{2^5}(2a_0+2a_1+a_{-1})-\frac{3^3 \cdot 5}{2^{11}}u_{\rho(a_0, a_1)}$\\
3A& $a_{-1},a_0,a_1,$ & $a_0 \cdot u_{\rho(a_0, a_1)}=\frac{1}{3^2}(2a_0-a_1-a_{-1})+\frac{5}{2^5}u_{\rho(a_0, a_1)}$~~~~\\
&$u_{\rho(a_0, a_1)}$& $u_{\rho(a_0, a_1)} \cdot u_{\rho(a_0, a_1)}=u_{\rho(a_0, a_1)}$\\
&& $(a_0,a_1)=\frac{13}{2^8}$,~$(a_0,u_{\rho(a_0, a_1)})=\frac{1}{2^2}$,~$(u_{\rho(a_0, a_1)},u_{\rho(a_0, a_1)})=\frac{2^3}{5}$
\\
&&\\
3C & $a_{-1},a_0,a_1$ & $a_0 \cdot a_1=\frac{1}{2^6}(a_0+a_1-a_{-1}),~(a_0,a_1)=\frac{1}{2^6}$ \\
&&\\ 
&  & ~$a_0 \cdot a_1=\frac{1}{2^6}(3a_0+3a_1+a_2+a_{-1}-3v_{\rho(a_0, a_1)})$\\
4A & $a_{-1},a_0,a_1,$ & $a_0 \cdot v_{\rho(a_0, a_1)}=\frac{1}{2^4}(5a_0-2a_1-a_2-2a_{-1}+3v_{\rho(a_0, a_1)})$\\
&$a_2,v_{\rho(a_0, a_1)}$&~$v_{\rho(a_0, a_1)} \cdot v_{\rho(a_0, a_1)}=v_{\rho(a_0, a_1)}$, ~$a_0 \cdot a_2=0$ \\
& & $(a_0,a_1)=\frac{1}{2^5},~(a_0,a_2)=0,~(a_0,v_{\rho(a_0, a_1)})=\frac{3}{2^3},~(v_{\rho(a_0, a_1)},v_{\rho(t_0,t_2)})=2$\\
&&\\
4B & $a_{-1},a_0,a_1,$ & $a_0 \cdot a_1=\frac{1}{2^6}(a_0+a_1-a_{-1}-a_2+a_{\rho(t_0,t_2)})$
\\
& $a_2,a_{\rho(t_0,t_2)}$ & $a_0 \cdot a_2=\frac{1}{2^3}(a_0+a_2-a_{\rho(t_0,t_2)})$\\
&& $(a_0,a_1)=\frac{1}{2^6},~(a_0,a_2)=(a_0,a_{\rho(a_0, a_1)})=\frac{1}{2^3}$ \\
&&\\
&& $a_0 \cdot a_1=\frac{1}{2^7}(3a_0+3a_1-a_2-a_{-1}-a_{-2})+w_{\rho(a_0, a_1)}$
\\
 5A & $a_{-2},a_{-1},a_0,$ & $a_0 \cdot a_2=\frac{1}{2^7}(3a_0+3a_2-a_1-a_{-1}-a_{-2})-w_{\rho(a_0, a_1)}$
\\
& $a_1,a_2,w_{\rho(a_0, a_1)}$ & $a_0 \cdot w_{\rho(a_0, a_1)}=\frac{7}{2^{12}}(a_{1}+a_{-1}-a_2-a_{-2})+\frac{7}{2^5}w_{\rho(a_0, a_1)}$\\
& & $w_{\rho(a_0, a_1)} \cdot w_{\rho(a_0, a_1)}=\frac{5^2 \cdot 7}{2^{19}}(a_{-2}+a_{-1}+a_0+a_1+a_2)$\\
&&$(a_0,a_1)=\frac{3}{2^7},~(a_0,w_{\rho(a_0, a_1)})=0$, $(w_{\rho(a_0, a_1)},w_{\rho(a_0, a_1)})=\frac{5^3 \cdot 7}{2^{19}}$\\
&& \\
& & $a_0 \cdot a_1=\frac{1}{2^6}(a_0+a_1-a_{-2}-a_{-1}-a_2-a_3+a_{\rho(t_0,t_3)})+\frac{3^2 \cdot 5}{2^{11}}u_{\rho(t_0,t_2)}$\\
6A& $a_{-2},a_{-1},a_0,$ &$a_0 \cdot a_2=\frac{1}{2^5}(2a_0+2a_2+a_{-2})-\frac{3^3 \cdot 5}{2^{11}}u_{\rho(t_0,t_2)}$  \\ 
&$a_1,a_2,a_3$  &$a_0 \cdot u_{\rho(t_0,t_2)}=\frac{1}{3^2}(2a_0-a_2-a_{-2})+\frac{5}{2^5}u_{\rho(t_0,t_2)}$  \\
&$a_{\rho(t_0,t_3)},u_{\rho(t_0,t_2)}$ & $a_0 \cdot a_3=\frac{1}{2^3}(a_0+a_3-a_{\rho(t_0,t_3)})$, $a_{\rho(t_0,t_3)} \cdot u_{\rho(t_0,t_2)}=0$\\
&&$(a_{\rho(t_0,t_3)},u_{\rho(t_0,t_2)})=0$, $(a_0,a_1)=\frac{5}{2^8}$, $(a_0,a_2)=\frac{13}{2^8}$, $(a_0,a_3)=\frac{1}{2^3}$\\
&&\\
\hline
\end{tabular}
\caption{The dihedral axial algebras of Monster type}
\label{tab:IPSS10}  
\end{center}
\end{table}

Using the algebra values in Table \ref{tab:IPSS10}, we can also calculate the eigenspace decompositions of the dihedral axial algebras of Monster type. Bases of the $0$, $\frac{1}{4}$ and $\frac{1}{32}$-eigenspaces of the axis $a_0$ for each dihedral algebra are given in Table \ref{tab:dihedralevecs}. In each case, the $1$-eigenspace is, by definition, one dimensional and so is omitted from this table.

\begin{table}
\begin{center}
\vspace{0.35cm}
\noindent
\begin{tabular}{|>{$}c<{$}|>{$}c<{$}|>{$}c<{$}|>{$}c<{$}|}
\hline
&&&\\
 \mbox{Type} & 0 & \frac{1}{4} & \frac{1}{32}  \\
&&&\\
\hline
&&&\\
2A & a_1 + a_{\rho(a_0, a_1)} - \frac{1}{2^2} & a_1 - a_{\rho(a_0, a_1)} & \\
&&&\\
2B & a_1 & & \\
&&&\\
3A  & u_{\rho(a_0, a_1)} - \frac{2 \cdot 5}{3^3}a_0 + \frac{2^5}{3^3}(a_1 + a_{-1}) 
    & \ml{c}{u_{\rho(a_0, a_1)} - \frac{2^3}{3^2 \cdot 5}a_0\\ 
        - \frac{2^5}{3^2 \cdot 5}(a_1 + a_{-1}) }
    & a_1 - a_{-1} \\
&&&\\
3C  & a_1 + a_{-1} - \frac{1}{2^5}a_0 
    & 
    & a_1 - a_{-1} \\ 
&&&\\
4A  & v_{\rho(a_0, a_1)} - \frac{1}{2}a_0 + 2(a_1 + a_{-1}), a_2 
    & \ml{c}{v_{\rho(a_0, a_1)} - \frac{1}{3}a_0 \\ 
        - \frac{2}{3}(a_1 + a_{-1}) - \frac{1}{3}a_2} 
    & a_1 - a_{-1} \\ 
&&&\\
4B  & \ml{c}{a_1 + a_{-1} - \frac{1}{2^5}a_0 - \frac{1}{2^3}(a_{\rho(t_0,t_2)}- a_2), \\ 
        a_2 + a_{\rho(t_0,t_2)} - \frac{1}{2^2}a_0} 
    & a_2 - a_{\rho(t_0,t_2)} 
    & a_1 - a_{-1} \\ 
&&&\\
5A  & \ml{c}{w_{\rho(a_0, a_1)} + \frac{3}{2^9}a_0 - \frac{3 \cdot 5}{2^7}(a_1 + a_{-1}) - \frac{1}{2^7}(a_2 - a_{-2}), \\ 
        w_{\rho(a_0, a_1)} - \frac{3}{2^9}a_0 + \frac{1}{2^7}(a_1 + a_{-1}) + \frac{3 \cdot 5}{2^7}(a_2 + a_{-2})} 
    & \ml{c}{w_{\rho(a_0, a_1)} + \frac{1}{2^7}(a_1 + a_{-1}) \\
        - \frac{1}{2^7}(a_2 + a_{-2})} 
    & \ml{c}{a_1 - a_{-1}, \\ a_2 - a_{-2}} \\
&&&\\
6A  & \ml{c}{u_{\rho(t_0,t_2)} + \frac{2}{3^2 \cdot 5}a_0 - \frac{2^8}{3^2 \cdot 5}(a_1 - a_{-1}) \\ 
        - \frac{2^5}{3^2 \cdot 5}(a_2 + a_{-2} + a_3 - a_{\rho(t_0,t_3)}), \\ 
        a_3 + a_{\rho(t_0,t_3)} - \frac{1}{2^2}a_0, \\ 
        u_{\rho(t_0,t_2)} - \frac{2 \cdot 5}{3^3}a_0 + \frac{2^5}{3^3}(a_2 + a_{-2})} 
    & \ml{c}{u_{\rho(t_0,t_2)} - \frac{2^3}{3^2 \cdot 5}a_0  \\ 
        - \frac{2^5}{3^2 \cdot 5}(a_2 + a_{-2} + a_3)\\
        + \frac{2^5}{3^2 \cdot 5} a_{\rho(t_0,t_3)} , \\ 
        a_3 - a_{\rho(t_0,t_3)}} 
    & \ml{c}{a_1 - a_{-1}, \\ a_2 - a_{-2}} \\
&&& \\
\hline
\end{tabular}
\caption{The eigenspace decomposition of the dihedral axial algebras of Monster type}
\label{tab:dihedralevecs}  
\end{center}
\end{table}

The following results follow directly from the values in Table \ref{tab:IPSS10}.

\begin{lem}
\label{lem:dihedral}
Let $U$ be a dihedral axial algebra of Monster type (as in Table \ref{tab:IPSS10}) that is generated by axes $a_0$ and $a_1$. Let $D := \langle \tau(a_0), \tau(a_1) \rangle$. Then $|a_0^D \cup a_1^D| = N$.
\end{lem}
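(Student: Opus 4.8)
The plan is to reduce the statement to a count of the distinct ``node'' axes $a_i$ in $U$ and then to identify $a_0^D \cup a_1^D$ with the full set of these. First I would invoke Theorem~\ref{thm:IPSS10}: setting $\rho = \tau(a_0)\tau(a_1)$, the elements $a_{2i+\epsilon} = a_\epsilon^{\rho^i}$ (for $\epsilon \in \{0,1\}$, $i \in \mathbb{Z}$) are all $(\mathcal{M}, *)$-axes, and $D = \langle \tau(a_0), \tau(a_1) \rangle$ is the dihedral group generated by the two involutions with rotation $\rho$. Since $a_\epsilon \in V_1^{(a_\epsilon)}$ is fixed by $\tau(a_\epsilon)$, and since the relation $\rho = \tau(a_0)\tau(a_1)$ gives $\tau(a_1) = \tau(a_0)\rho$, a short computation shows that $D$ permutes the node-axes via $\rho \colon a_i \mapsto a_{i+2}$, $\tau(a_0) \colon a_i \mapsto a_{-i}$ and $\tau(a_1) \colon a_i \mapsto a_{2-i}$. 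In particular $D$ carries node-axes to node-axes, so $U = \langle\langle a_0, a_1 \rangle\rangle$ is $D$-invariant.

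Next I would show that $a_0^D \cup a_1^D = \{a_i \mid i \in \mathbb{Z}\}$. The inclusion $\subseteq$ is immediate from the displayed action. For $\supseteq$, every node-axis has the form $a_{2i+\epsilon} = a_\epsilon^{\rho^i}$ with $\rho^i \in D$, hence lies in $a_\epsilon^D \subseteq a_0^D \cup a_1^D$. Thus the quantity $|a_0^D \cup a_1^D|$ equals the number of \emph{distinct} node-axes of $U$, and it remains to show this number is $N$.

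I would count the distinct node-axes type by type. Table~\ref{tab:IPSS10} lists, as part of a basis of $U$, exactly $N$ node-axes (for instance $a_{-1}, a_0, a_1, a_2$ in type $4A$); being basis vectors these are pairwise distinct, while the remaining basis vectors ($u_\rho$, $v_\rho$, $w_\rho$ and the $2A$-axes $a_{\rho(\cdot,\cdot)}$) are not of node type and so do not enter the orbit. The content is then that the sequence $(a_i)$ has period exactly $N$, i.e. $a_{i+N} = a_i$ with no smaller period. Granting this, the count follows by elementary arithmetic in the dihedral action on $\mathbb{Z}/N\mathbb{Z}$: when $N$ is odd, $\gcd(2,N)=1$ so $\rho$ acts as a single $N$-cycle and $a_0^D = a_1^D$ already exhausts all $N$ axes; when $N$ is even, $\rho$ has two orbits (the even- and odd-indexed axes), each of size $N/2$ and not merged by the reflections, so $a_0^D$ and $a_1^D$ are disjoint of size $N/2$ each. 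In both cases $|a_0^D \cup a_1^D| = N$.

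The main obstacle is precisely this periodicity check, which is where the explicit data is used. I expect to read the required identifications off the $\tfrac{1}{32}$-eigenvectors in Table~\ref{tab:dihedralevecs}, since these determine how each $\tau(a_\epsilon)$ acts on the low-index axes; for example, in type $3A$ the eigenvector $a_1 - a_{-1}$ forces $\tau(a_0)$ to interchange $a_1$ and $a_{-1}$, and the analogous eigenvector of $a_1$ forces $a_2 = a_{-1}$, which yields period $3$. Carrying out this verification for each of the eight types $2A, 2B, 3A, 3C, 4A, 4B, 5A, 6A$ (including the degenerate cases $2A$, $2B$, where the empty $\tfrac{1}{32}$-eigenspace makes each $\tau(a_\epsilon)$ act trivially on $U$, giving period $2$) is routine but unavoidable, and completes the proof.
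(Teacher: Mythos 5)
Your proposal is correct and amounts to a careful expansion of the paper's own justification: the paper gives no argument beyond the assertion that the lemma ``follows directly from the values in Table~\ref{tab:IPSS10},'' leaving exactly the case-by-case verification you describe implicit. Your organization of that verification --- reducing to the dihedral action of $D$ on indices modulo $N$, with the parity argument for even $N$, and reading the periodicity identifications (e.g.\ $a_2 = a_{-1}$ in type $3A$) off the $\frac{1}{32}$-eigenvectors of Table~\ref{tab:dihedralevecs} --- correctly supplies the details the paper omits.
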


\begin{lem}
\label{lem:inclusions}
Let $U$ be a dihedral axial algebra of Monster type (as in Table \ref{tab:IPSS10}) that is generated by axes $a_0$ and $a_1$. Then $U$ contains no proper, non-trivial subalgebras, with the exception of the following cases.
\begin{enumerate}[(i)]
\item If $U$ is of type $4A$ or $4B$ then the subalgebras $\langle \langle a_0, a_2 \rangle \rangle$ and $\langle \langle a_1, a_{-1} \rangle \rangle$ are of type $2B$ or $2A$ respectively.
\item If $U$ is of type $6A$ then the subalgebras $\langle \langle a_0, a_3 \rangle \rangle$ and $\langle \langle a_1, a_{-2} \rangle \rangle$ are of type $3A$ and the subalgebras $\langle \langle a_0, a_2 \rangle \rangle$, $\langle \langle a_1, a_{-1} \rangle \rangle$, $\langle \langle a_0, a_{-2} \rangle \rangle$ and $\langle \langle a_1, a_3 \rangle \rangle$ are of type $2A$. 
\end{enumerate} 
\end{lem}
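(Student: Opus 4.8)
The plan is to verify the statement type by type, reading every product and inner product that is needed directly off Table~\ref{tab:IPSS10}. The starting observation is that, by Lemma~\ref{lem:dihedral}, a dihedral algebra $U$ of type $NX$ contains exactly the $N$ axes in $a_0^D \cup a_1^D$ together with at most one further axis (the $2A$-, $3A$-, $4A$- or $5A$-axis recorded in the ``Basis'' column). Since $U$ is generated by its axes, any subalgebra of $U$ generated by axes is generated by a subset of this finite axis set; a subalgebra generated by a single axis is one-dimensional and hence trivial, while a subalgebra generated by two or more axes is the join of the dihedral subalgebras $\langle\langle a, b\rangle\rangle$ taken over pairs of its generators. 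The task therefore reduces to (a) determining, for each pair of axes $a,b$ of $U$, the isomorphism type of $\langle\langle a,b\rangle\rangle$, and (b) checking which of these subalgebras, and which of their joins, are proper.

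For step (a) I would invoke Theorem~\ref{thm:IPSS10}: each $\langle\langle a,b\rangle\rangle$ is itself a dihedral algebra of Monster type, and its type is pinned down by the product $a\cdot b$, equivalently by the Frobenius angle $\langle a, b\rangle$, whose value separates the rows of Table~\ref{tab:IPSS10}. Using the action of $\rho = \tau(a_0)\tau(a_1)$ and the symmetry between $a_0$ and $a_1$ (Proposition~\ref{prop:axes}), every pair $(a_i,a_j)$ can be carried to a pair $(a_0,a_k)$, so only the finitely many products $a_0\cdot a_k$ must be examined. Most of these appear verbatim in the table; the few that do not (notably the ``diagonal'' products in the $4A$, $4B$ and $6A$ cases) are obtained either by applying $\rho$ to a listed product or by computing the relevant angle from the Frobenius form, using its $D$-invariance (Proposition~\ref{prop:form}) and the idempotency of the extra axis.

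The case analysis then splits cleanly. For types $2A$, $2B$, $3A$, $3C$ and $5A$ every pair of distinct axes generates all of $U$ — for $3A$ and $3C$ because there are only three axes, and for $5A$ because the distance-two product is again of $5A$ type and $\gcd(2,5)=1$ — so no proper, non-trivial subalgebra arises. For types $4A$, $4B$ and $6A$ the rotation index is not coprime to $N$, so the ``diagonal'' pairs fail to generate $U$; here I would read off the products $a_0\cdot a_2$, $a_1\cdot a_{-1}$ (and, for $6A$, the further pairs $a_0\cdot a_3$ and their companions) to identify the resulting proper subalgebras with the types listed in the statement, and finally confirm that enlarging such a pair by a further axis, or adjoining the extra $2A$/$3A$-axis, recovers $U$ or one of the subalgebras already found.

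The main obstacle I anticipate is not any single computation but the bookkeeping in the $6A$ case: one must track the index arithmetic modulo the order of $\rho$, match each diagonal pair to its $2A$- or $3A$-subalgebra, and verify that the two ``triangles'' and the long diagonals together exhaust the proper subalgebras. A secondary delicate point is the handful of products not appearing directly in Table~\ref{tab:IPSS10}; the cleanest route there is to compute the relevant Frobenius angle and appeal to the fact that the angle alone determines the dihedral type, rather than expanding the full product in the basis.
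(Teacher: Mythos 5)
Your overall route is the same as the paper's: the paper gives no argument at all beyond asserting that the results ``follow directly from the values in Table \ref{tab:IPSS10}'', and your pair-by-pair verification -- using Theorem \ref{thm:IPSS10}, the $D$-symmetries and Proposition \ref{prop:axes} to reduce every pair to one whose product is listed -- is the natural fleshing-out of that assertion. However, two of the specific devices you propose to lean on are faulty, and a third point shows the plan was not actually executed.

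First, your claim that the type of $\langle\langle a,b\rangle\rangle$ is ``pinned down \ldots equivalently by the Frobenius angle $\langle a,b\rangle$, whose value separates the rows of Table \ref{tab:IPSS10}'', and your proposal to use this as ``the cleanest route'' for the products not listed, is false: the rows $3C$ and $4B$ share the value $(a_0,a_1)=\frac{1}{2^6}$, so the angle alone does not determine the dihedral type. You are rescued only by accident, because the proper subalgebras actually arising in the lemma have angles $0$, $\frac{1}{2^3}$ and $\frac{13}{2^8}$, which are unambiguous; the robust criterion is the shape of the product itself (whether $a\cdot b$ is zero, involves a $2A$-axis $a_{\rho}$, or involves a $3A$-axis $u_{\rho}$). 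Second, Theorem \ref{thm:IPSS10} cannot be invoked for pairs containing the extra basis vectors $u_{\rho}$, $v_{\rho}$, $w_{\rho}$, which you include among your generators: the theorem applies to pairs of $(\mathcal{M},*)$-axes, and these vectors are not $(\mathcal{M},*)$-axes (indeed Section \ref{sec:4Afusion} shows a $4A$-axis has eigenvalues $1,0,\frac{1}{2},\frac{3}{8},t$). Such pairs must either be handled by direct computation or excluded by reading the lemma, as the paper's own usage in Section \ref{sec:shape} does, as a statement about the dihedral subalgebras $\langle\langle a_i, a_j\rangle\rangle$ only.

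Finally, the $6A$ bookkeeping you flag as the main obstacle, had you carried it out, would have contradicted the statement as printed: Table \ref{tab:IPSS10} gives $a_0\cdot a_2$ as the $3A$-type product (it involves $u_{\rho(t_0,t_2)}$) and $a_0\cdot a_3$ as the $2A$-type product (it involves $a_{\rho(t_0,t_3)}$), so $\langle\langle a_0,a_2\rangle\rangle$ is of type $3A$ and $\langle\langle a_0,a_3\rangle\rangle$ of type $2A$ -- the opposite of the assignment in part (ii) of the statement, which evidently transposes $2A$ and $3A$. Your plan to ``identify the resulting proper subalgebras with the types listed in the statement'' therefore cannot be completed literally; a faithful execution of your own method is exactly what would detect this error, and deferring to the statement's labels instead of to the table conceals it.
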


Informally, this means that we have the following \emph{inclusions} of algebras:
\[
2A \hookrightarrow 4B, \qquad 2B \hookrightarrow 4A, \qquad 2A \hookrightarrow 6A, \qquad \textrm{and} \qquad 3A \hookrightarrow 6A
\]
and that these are the only possible inclusions of non-trivial algebras. 

\section{Construction}
\label{sec:construction}

We will classify Frobenius $(\mathcal{M}, *)$-axial algebras that obey property $\mathcal{P}$ as defined below. In doing so, we will construct an infinite family of Frobenius $(\mathcal{M}, *)$-axial algebras over the field $\mathbb{R}$. 

\begin{defn}
We say that a Frobenius $(\mathcal{M}, *)$-axial algebra $V$ obeys \emph{property $\mathcal{P}$} if it is generated by a set of axes $A := \{a_1, a_{-1}, a_2, a_{-2}, a_3, a_{-3} \}$ such that the corresponding Miyamoto involutions induce the following permutation actions (written in cycle notation) on the set $A$
\begin{align*}
    \tau(a_1) = \tau(a_{-1}): (a_2, \,  a_{-2})(a_3, \, a_{-3}); \\
    \tau(a_2) = \tau(a_{-2}): (a_1, \,  a_{-1})(a_3, \, a_{-3}); \\
    \tau(a_3) = \tau(a_{-3}): (a_1, \,  a_{-1})(a_2, \, a_{-2}).
\end{align*}
\end{defn}

\subsection{The shape of the algebra} 
\label{sec:shape}

Suppose that $V$ is a Frobenius $(\mathcal{M}, *)$-axial algebra that obeys property $\mathcal{P}$. We start by classifying the possible dihedral algebras contained in $V$. For each dihedral subalgebra, there is possibly more than one choice of generators for each dihedral algebra. For example, 
\[
\langle \langle a_1, a_2 \rangle \rangle = \langle \langle a_1, a_{-2} \rangle \rangle = \langle \langle a_{-1}, a_2 \rangle \rangle = \langle \langle a_{-1}, a_{-2} \rangle \rangle. 
\]

Moreover, from Lemma \ref{lem:inclusions}, the type of one dihedral algebra will determine the types of those algebras which it contains, and of those algebras that it is contained in, if any exist. For example, from Lemma \ref{lem:dihedral}, $U_0 := \langle \langle a_1, a_2 \rangle \rangle$ must be of type $4A$ or $4B$. This algebra contains the dihedral algebra $\langle \langle a_{-1}, a_{1} \rangle \rangle$ which must therefore be of type $2B$ or $2A$ if $U_0$ is of type $4A$ or $4B$ respectively. 

All non-trivial dihedral subalgebras of $V$ and their respective inclusions are shown in Figure 1. In this directed graph, the vertices are the non-trivial dihedral subalgebras of $V$ and if $U$ and $W$ are two such algebras then $U \rightarrow W$ if and only if $U$ is a subalgebra of $W$. 

It is clear from this graph that the choice of any one of these algebras determines the type of all other algebras. Thus we have only two cases - one when the algebra $\langle \langle a_1, a_2 \rangle \rangle$ is of type $4A$ and another when it is of type $4B$. For $X \in \{A, B \}$ we say that $V$ has \emph{shape} $4X$ if the subalgebra $U_0$ of $V$ is of type $4X$. 

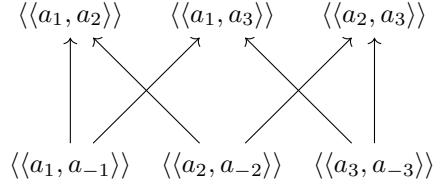
\begin{figure}
\begin{center}
\begin{tikzpicture}
\draw 
(0,2) node(a1) {$\langle \langle a_1, a_2 \rangle \rangle$}
(2,2) node(a2) {$\langle \langle a_1, a_3 \rangle \rangle$}
(4,2) node(a3) {$\langle \langle a_2, a_3 \rangle \rangle$}
(0,0) node(a4) {$\langle \langle a_1, a_{-1} \rangle \rangle$} 
(2,0) node(a5) {$\langle \langle a_2, a_{-2} \rangle \rangle$}
(4,0) node(a6) {$\langle \langle a_3, a_{-3} \rangle \rangle$};
\draw [->] (a4) -- (a1);
\draw [->] (a5) -- (a1);
\draw [->] (a4) -- (a2);
\draw [->] (a6) -- (a2);
\draw [->] (a5) -- (a3);
\draw [->] (a6) -- (a3);
\end{tikzpicture}
\end{center}
\label{fig:inclusions}
\caption{The inclusions of dihedral algebras in $M(U_{\mathcal{P}}, V_{\mathcal{P}})$}
\end{figure}

\subsection{The universal algebra}

In \cite{HRS15}, Hall, Rehren and Shpectorov define the category of commutative (non-associative) Frobenius algebras on $n$ (marked) generators, which they denote $\mathcal{C}_0$. They show that $\mathcal{C}_0$ contains an algebra $\hat{M}$, called the \emph{universal $n$-generated commutative Frobenius algebra}, such that $\mathcal{C}_0$ coincides with the category of all quotients of $\hat{M}$. 

The universal algebra $\hat{M}$ is constructed as follows. Let $\hat{X}$ be the free commutative non-associative magma with marked generators $\hat{A} := \{\hat{a}_1, \ldots, \hat{a}_n\}$. Let $\sim$ be the equivalence relation on $\hat{X} \times \hat{X}$ generated by all elementary equivalences $(\hat{x} \cdot \hat{y}, \hat{z}) \sim (\hat{x}, \hat{y} \cdot \hat{z})$ for $\hat{x}, \hat{y}, \hat{z} \in \hat{X}$. For $\hat{x}, \hat{y} \in \hat{X}$, we let $[\hat{x}, \hat{y}]$ denote the $\sim$-equivalence class containing $(\hat{x}, \hat{y})$. Furthermore, let $[\hat{X} \times \hat{X}]$ denote the set of all $\sim$-equivalence classes. 

Let 
\[
\hat{R} = \mathbf{k}\left[\{\lambda_{[\hat{x}, \hat{y}]} \}_{ [\hat{x}, \hat{y}] \in [\hat{X} \times \hat{X}]} \right]
\]
and let $\hat{M} = \hat{R}\hat{X}$, the set of all formal linear combinations $\sum_{\hat{x} \in \hat{X}} \alpha_{\hat{x}}\hat{x}$ where $\alpha_{\hat{x}} \in \hat{R}$. Then $\hat{M}$ is a commutative $\hat{R}$-algebra where the algebra product is defined using the operation in $\hat{X}$ and the distributive law. We can also define a bilinear form on $\hat{M}$ as follows:
\[
\left\langle \sum_{\hat{x} \in \hat{X}} \alpha_{\hat{x}}\hat{x}, \sum_{\hat{y} \in \hat{X}} \beta_{\hat{y}}\hat{y} \right\rangle = \sum_{\hat{x}, \hat{y} \in \hat{X}} \alpha_{\hat{x}}\beta_{\hat{y}} \lambda_{[\hat{x}, \hat{y}]}.
\]
Then, as $\lambda_{[\hat{x}, \hat{y}]}$ depends on the equivalence class $[\hat{x}, \hat{y}]$, this bilinear form is automatically Frobenius. This algebra $\hat{M}$ is also generated as an algebra by the marked generators $\hat{A} := \{\hat{a}_1, \ldots, \hat{a}_n\}$ of $\hat{X}$, 

\begin{prop}[{\cite[Proposition 4.1]{HRS15}}] 
Every algebra $M$ in $\mathcal{C}_0$ is the quotient, up to isomorphism, of $\hat{M}$ over an ideal $I_M$ and has coefficient ring $R$, which is the quotient of $\hat{R}$ over an ideal $J_M$.
\end{prop}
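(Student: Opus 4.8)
The plan is to realise $\hat{M}$ as the universal (initial) object of $\mathcal{C}_0$: for each $M \in \mathcal{C}_0$ I will construct a canonical surjective morphism $\Phi \colon \hat{M} \to M$ lying over a canonical surjection $\psi \colon \hat{R} \to R$ of coefficient rings, and then simply read off $I_M = \ker \Phi$ and $J_M = \ker \psi$. The whole argument is a universal-property computation, so the content is in checking that the natural candidate maps are well defined and respect all three pieces of structure (scalars, product, and form).

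First I would build the underlying magma map. Let $a_1, \dots, a_n$ be the marked generators of $M$. Since $\hat{X}$ is the free commutative non-associative magma on $\hat{a}_1, \dots, \hat{a}_n$, there is a unique magma homomorphism $\phi \colon \hat{X} \to M$ with $\phi(\hat{a}_i) = a_i$, defined by induction on word length via $\phi(\hat{x} \cdot \hat{y}) = \phi(\hat{x}) \cdot \phi(\hat{y})$. I then define the coefficient map on the polynomial generators of $\hat{R}$ by $\psi(\lambda_{[\hat{x}, \hat{y}]}) = \langle \phi(\hat{x}), \phi(\hat{y}) \rangle_M$. The crucial point is well-definedness: the right-hand side must depend only on the $\sim$-class $[\hat{x}, \hat{y}]$. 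As $\sim$ is generated by the elementary equivalences $(\hat{x} \cdot \hat{y}, \hat{z}) \sim (\hat{x}, \hat{y} \cdot \hat{z})$, it suffices to verify invariance under a single such move, which is precisely the Frobenius identity in $M$, namely $\langle \phi(\hat{x}) \cdot \phi(\hat{y}), \phi(\hat{z}) \rangle_M = \langle \phi(\hat{x}), \phi(\hat{y}) \cdot \phi(\hat{z}) \rangle_M$; the general case follows by induction along a finite chain of such moves, using symmetry of the form to handle either slot. Since $\hat{R}$ is a polynomial ring over $\mathbf{k}$, this assignment on generators extends uniquely to a $\mathbf{k}$-algebra homomorphism $\psi \colon \hat{R} \to R$.

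Next I would assemble $\Phi \colon \hat{M} = \hat{R}\hat{X} \to M$ by $\Phi\big( \sum_{\hat{x}} \alpha_{\hat{x}} \hat{x} \big) = \sum_{\hat{x}} \psi(\alpha_{\hat{x}}) \phi(\hat{x})$. This is $\psi$-semilinear by construction; it is an algebra homomorphism because $\phi$ is a magma homomorphism and the product on $\hat{M}$ is obtained from the magma operation by the distributive law; and it preserves the forms because on basis elements $\langle \hat{x}, \hat{y} \rangle_{\hat{M}} = \lambda_{[\hat{x}, \hat{y}]}$ is sent by $\psi$ exactly to $\langle \phi(\hat{x}), \phi(\hat{y}) \rangle_M$. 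Surjectivity of $\Phi$ follows from $M$ being generated as an $R$-algebra by the $a_i = \Phi(\hat{a}_i)$, and surjectivity of $\psi$ follows from the coefficient ring of an object of $\mathcal{C}_0$ being, by definition, generated over $\mathbf{k}$ by the form values $\langle \phi(\hat{x}), \phi(\hat{y}) \rangle_M$. Setting $J_M := \ker \psi$ and $I_M := \ker \Phi$, the first isomorphism theorem gives $R \cong \hat{R}/J_M$ and $M \cong \hat{M}/I_M$; one checks $I_M \supseteq J_M \hat{M}$, so that the quotient carries its natural $\hat{R}/J_M$-algebra structure. Uniqueness of $(\psi, \Phi)$ — and hence the identification of $\mathcal{C}_0$ with the category of quotients of $\hat{M}$ — is immediate, since any morphism is forced on the generators $\hat{a}_i$ and on the generators $\lambda_{[\hat{x}, \hat{y}]}$ of $\hat{R}$.

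The step I expect to be the main obstacle is the well-definedness of $\psi$: one must propagate the single Frobenius identity through an arbitrary finite chain of elementary equivalences connecting two $\sim$-equivalent pairs, keeping track of the symmetry needed to move the reassociation between the two slots of the form. Everything after that is a routine verification that $\Phi$ respects scalars, product, and form coherently, together with the standard first-isomorphism-theorem bookkeeping.
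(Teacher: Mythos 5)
The paper does not prove this statement at all---it is quoted directly from \cite[Proposition 4.1]{HRS15}---so the only meaningful comparison is with the standard universal-property argument, which is exactly what you give: construct the magma homomorphism $\phi$ by freeness of $\hat{X}$, use the Frobenius identity in $M$ to make $\psi$ well defined on the $\sim$-classes of the $\lambda_{[\hat{x},\hat{y}]}$, assemble the $\psi$-semilinear surjection $\Phi$, and conclude by the first isomorphism theorem; your proof is correct and is essentially the argument of \cite{HRS15}. Two minor remarks: the appeal to symmetry of the form in the well-definedness step is unnecessary, since the mirror image of an elementary equivalence is again an elementary equivalence after relabelling the variables, so every step of a chain is a literal instance of $\langle u\cdot v,w\rangle=\langle u,v\cdot w\rangle$; and you are right that surjectivity of $\psi$ (hence of $\Phi$) is not something one proves but is exactly the definitional requirement in $\mathcal{C}_0$ that the coefficient ring be generated by the values of the form on words in the marked generators.
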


Choose $U \leq \hat{M}$ and $V \leq \hat{R}$. Then we let $\mathcal{C}_{U,V}$ be the subcategory of $\mathcal{C}_0$ consisting of all algebras $M$ from $\mathcal{C}_0$ such that $U \subseteq I_M$ and $V \subseteq J_M$. Let 
\begin{itemize}
\item $I_0$ be the ideal of $\hat{M}$ generated by $U$;
\item $J(U,V)$ be the ideal of $\hat{R}$ generated by $V$, together with all elements $\langle i, m \rangle$ for $i \in I_0$ and $m \in \hat{M}$;
\item $I(U,V)$ be the ideal of $\hat{M}$ generated by $U$ and $J(U,V)\hat{M}$.
\end{itemize}

\begin{prop}[{\cite[Proposition 4.4]{HRS15}}]
If $J(U,V) \neq \hat{R}$ then $\mathcal{C}_{U,V}$ is non-empty and contains a universal algebra $M(U,V)$ such that $\mathcal{C}_{U,V}$ consists of all quotients of $M(U,V)$. Moreover, $M(U,V) = \hat{M}/I(U,V)$ and has coefficient ring $R(U,V) = \hat{R}/J(U,V)$. 
\end{prop}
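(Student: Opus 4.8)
The plan is to exhibit $M(U,V) := \hat{M}/I(U,V)$ as an initial object of $\mathcal{C}_{U,V}$, from which it follows immediately that every member of $\mathcal{C}_{U,V}$ is a quotient of it. There are three things to verify: that $M(U,V)$ is a genuine object of $\mathcal{C}_0$ (a commutative Frobenius algebra over the ring $R(U,V) = \hat{R}/J(U,V)$ on the $n$ marked generators), that it actually lies in $\mathcal{C}_{U,V}$, and that it maps uniquely to every object of $\mathcal{C}_{U,V}$.

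First I would record the structural observation that, since $J(U,V)\hat{M}$ is already an ideal of $\hat{M}$ and $I_0$ is the ideal generated by $U$, we have $I(U,V) = I_0 + J(U,V)\hat{M}$. Because $J(U,V)\hat{M} \subseteq I(U,V)$, the $\hat{R}$-action on $\hat{M}/I(U,V)$ factors through $R(U,V) = \hat{R}/J(U,V)$, so $M(U,V)$ is an $R(U,V)$-algebra generated by the images of $\hat{a}_1, \dots, \hat{a}_n$. The crux is that the Frobenius form descends: a form on $\hat{M}/I$ valued in $\hat{R}/J$ is well-defined exactly when $\langle I, \hat{M}\rangle \subseteq J$. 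Here $\langle I_0, \hat{M}\rangle \subseteq J(U,V)$ holds by the very definition of $J(U,V)$, while $\langle J(U,V)\hat{M}, \hat{M}\rangle = J(U,V)\langle \hat{M}, \hat{M}\rangle \subseteq J(U,V)$ since $J(U,V)$ is an ideal of $\hat{R}$; adding these gives $\langle I(U,V), \hat{M}\rangle \subseteq J(U,V)$, so the form on $\hat{M}$ induces a Frobenius form on $M(U,V)$. The hypothesis $J(U,V) \neq \hat{R}$ guarantees $R(U,V) \neq 0$, so $M(U,V)$ is a nonzero algebra and $\mathcal{C}_{U,V}$ is nonempty. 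Membership $M(U,V) \in \mathcal{C}_{U,V}$ is then clear, since $I_{M(U,V)} = I(U,V) \supseteq U$ and $J_{M(U,V)} = J(U,V) \supseteq V$.

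For universality, take any $M \in \mathcal{C}_{U,V}$. By the preceding proposition $M = \hat{M}/I_M$ with coefficient ring $\hat{R}/J_M$, where $U \subseteq I_M$ and $V \subseteq J_M$, and the canonical surjection $\hat{M} \to M$ is the unique morphism in $\mathcal{C}_0$. I would show the two containments $J(U,V) \subseteq J_M$ and $I(U,V) \subseteq I_M$, which let this surjection factor through $M(U,V)$. For the first: $V \subseteq J_M$ by assumption, $I_0 \subseteq I_M$ because $I_M$ is an ideal containing $U$, and the well-definedness of the Frobenius form on $M$ forces $\langle I_M, \hat{M}\rangle \subseteq J_M$; hence the generators $\langle i, m\rangle$ of $J(U,V)$ (with $i \in I_0 \subseteq I_M$) all lie in $J_M$, giving $J(U,V) \subseteq J_M$. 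For the second: $I_0 \subseteq I_M$ again, and since the coefficient ring of $M$ is $\hat{R}/J_M$ we have $J_M\hat{M} \subseteq I_M$, so $J(U,V)\hat{M} \subseteq J_M\hat{M} \subseteq I_M$; together these give $I(U,V) \subseteq I_M$. The induced map $M(U,V) \to M$ is the desired morphism, and it is unique because any $\mathcal{C}_0$-morphism is determined by its (forced) action on the marked generators. Since $\hat{M} \to M$ is surjective and factors through $M(U,V)$, the map $M(U,V) \to M$ is surjective, so $M$ is a quotient of $M(U,V)$; conversely any quotient of $M(U,V)$ has defining ideals containing $I(U,V) \supseteq U$ and $J(U,V) \supseteq V$, hence lies in $\mathcal{C}_{U,V}$.

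The main obstacle is the simultaneous bookkeeping of the two ideals $I(U,V)$ and $J(U,V)$ and the form relating them: $J(U,V)$ must be large enough to absorb all values $\langle I_0, \hat{M}\rangle$ so that the form descends to $M(U,V)$, yet small enough to inject into every $J_M$, and this balance relies entirely on the equivalence between well-definedness of a Frobenius form on a quotient and the condition $\langle I, \hat{M}\rangle \subseteq J$. Once that equivalence is isolated, the rest is a routine diagram chase; the only genuine input beyond formal manipulation is the hypothesis $J(U,V) \neq \hat{R}$, needed solely to exclude the degenerate zero algebra and thereby ensure $\mathcal{C}_{U,V}$ is nonempty.
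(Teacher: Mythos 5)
The paper itself offers no proof of this proposition: it is quoted as \cite[Proposition 4.4]{HRS15} and used as a black box, so there is nothing in the paper to compare your argument against line by line. Measured against the original argument in \cite{HRS15}, your reconstruction is correct and takes essentially the same route: the decomposition $I(U,V) = I_0 + J(U,V)\hat{M}$, descent of the $\hat{R}$-action and of the form via $\langle I(U,V), \hat{M} \rangle \subseteq J(U,V)$, and universality via the two containments $J(U,V) \subseteq J_M$ and $I(U,V) \subseteq I_M$, forced respectively by well-definedness of the induced form on $M$ and by $J_M$ acting trivially on $M$ (i.e.\ $J_M\hat{M} \subseteq I_M$).

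One slip is worth correcting, though it is inessential to your argument: from $J(U,V) \neq \hat{R}$ you conclude $R(U,V) \neq 0$ and then ``so $M(U,V)$ is a nonzero algebra.'' That implication is false in general. For instance, taking $U = \{\hat{a}_1, \dots, \hat{a}_n\}$ and $V = \emptyset$ gives $I_0 = \hat{M}$ (every element of $\hat{X}$ is a product of the marked generators), hence $M(U,V) = 0$, while $J(U,V)$ is the ideal of $\hat{R}$ generated by all the indeterminates $\lambda_{[\hat{x}, \hat{y}]}$, which is proper. Non-emptiness of $\mathcal{C}_{U,V}$ does not require the module $M(U,V)$ to be nonzero; it requires only that $M(U,V)$, possibly the zero algebra over the nonzero ring $R(U,V)$, is an object of $\mathcal{C}_0$ lying in $\mathcal{C}_{U,V}$, which your membership argument already establishes. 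The hypothesis $J(U,V) \neq \hat{R}$ is there to exclude the zero coefficient ring, not the zero module.
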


For a fixed fusion rule $(\mathcal{F}, *)$, Hall et al. show that the universal primitive Frobenius $(\mathcal{F}, *)$-axial algebra is equal to $M(U_{(\mathcal{F}, *)},V_{(\mathcal{F}, *)})$ for some explicit vlaues of $U_{(\mathcal{F}, *)}$ and $V_{(\mathcal{F}, *)}$. We will similarly construct the universal primitive Frobenius $(\mathcal{M}, *)$-axial algebra that obeys property $\mathcal{P}$ and has shape $4X$ for $X \in \{A,B\}$ (as defined in Section \ref{sec:shape}).


We now restrict to the case where  
\begin{itemize}
\item $\hat{M}$ has six marked generators, which we denote $ \hat{A} = \{\hat{a}_1, \hat{a}_{-1}, \hat{a}_2, \hat{a}_{-2}, \hat{a}_3, \hat{a}_{-3} \}$;
\item $\hat{R}$ contains $\mathbb{R}$ as a subfield.
\end{itemize}


\begin{lem}[{\cite[Lemma 5.4]{HRS15}}]
\label{lem:espaces}
Suppose that $V$ is an algebra over a field $\mathbf{k}$. If $u, v \in \hat{M}$ and $\Lambda \subseteq \mathbf{k}$ then 
\[
v \in \bigoplus_{\lambda \in \Lambda} V_\lambda^{(u)} \Leftrightarrow f_\Lambda(ad_u)(v) = 0
\]
where $V_\lambda^{(u)}$ denotes the $\lambda$-eigenspace of $ad_u$ and $f_\Lambda(x) = \prod_{\lambda \in \Lambda} (x - \lambda) \in \mathbf{k}[x]$.
\end{lem}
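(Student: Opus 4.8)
The plan is to treat this as a statement about the kernel of a squarefree polynomial evaluated at the linear operator $\mathrm{ad}_u$, and to prove the two implications separately. Throughout, every operator in sight is a polynomial in $\mathrm{ad}_u$, so all of them commute; this is what makes the manipulations below legitimate.

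For the forward implication, suppose $v \in \bigoplus_{\lambda \in \Lambda} V_\lambda^{(u)}$ and write $v = \sum_{\lambda \in \Lambda} v_\lambda$ with $v_\lambda \in V_\lambda^{(u)}$. Since $\mathrm{ad}_u(v_\lambda) = \lambda v_\lambda$, we have $p(\mathrm{ad}_u)(v_\lambda) = p(\lambda) v_\lambda$ for every $p \in \mathbf{k}[x]$; applying this with $p = f_\Lambda$ and using $f_\Lambda(\lambda) = 0$ for each $\lambda \in \Lambda$ gives $f_\Lambda(\mathrm{ad}_u)(v) = \sum_{\lambda} f_\Lambda(\lambda) v_\lambda = 0$. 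This direction is immediate and uses nothing beyond the eigenvalue property.

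The content is the reverse implication, and here the key observation is that $f_\Lambda$ is squarefree. For each $\lambda \in \Lambda$ set $h_\lambda(x) = f_\Lambda(x)/(x - \lambda) = \prod_{\mu \in \Lambda \setminus \{\lambda\}} (x - \mu)$. The polynomials $\{h_\lambda\}_{\lambda \in \Lambda}$ have no common root, hence are coprime in the PID $\mathbf{k}[x]$, so by B\'ezout there exist $g_\lambda \in \mathbf{k}[x]$ with $\sum_{\lambda \in \Lambda} g_\lambda h_\lambda = 1$. Now suppose $f_\Lambda(\mathrm{ad}_u)(v) = 0$ and define $v_\lambda := g_\lambda(\mathrm{ad}_u)\, h_\lambda(\mathrm{ad}_u)(v)$. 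Then $\sum_\lambda v_\lambda = v$, and since $(x - \lambda) h_\lambda(x) = f_\Lambda(x)$ and all these operators commute, we get $(\mathrm{ad}_u - \lambda)(v_\lambda) = g_\lambda(\mathrm{ad}_u)\, f_\Lambda(\mathrm{ad}_u)(v) = 0$, so $v_\lambda \in V_\lambda^{(u)}$. Hence $v = \sum_{\lambda \in \Lambda} v_\lambda \in \bigoplus_{\lambda \in \Lambda} V_\lambda^{(u)}$.

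The step most likely to mislead is precisely this reverse direction: one is tempted first to invoke semisimplicity of $\mathrm{ad}_u$ (writing $v$ in a global eigenbasis and comparing components), but this is both unnecessary and not assumed. The B\'ezout argument above works for an arbitrary linear operator exactly because $f_\Lambda$ has distinct roots, which is what makes the linear factors pairwise coprime. The only place the field hypothesis on $\mathbf{k}$ enters is to guarantee that $\mathbf{k}[x]$ is a principal ideal domain, so that coprimality yields a B\'ezout identity; the fact that $\Lambda \subseteq \mathbf{k}$ ensures each factor $(x - \lambda)$ genuinely lies in $\mathbf{k}[x]$.
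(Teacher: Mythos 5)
Your proof is correct and complete. There is nothing in the paper to compare it against: the lemma is quoted from \cite[Lemma 5.4]{HRS15} and the paper itself gives no proof, so your argument supplies what the paper discharges by citation. Your B\'ezout/kernel-decomposition argument is the standard proof of this fact, and your closing remark is exactly the right one: no semisimplicity of $\mathrm{ad}_u$ is needed, only that $f_\Lambda$ is squarefree with all roots in $\mathbf{k}$ (which is what $\Lambda \subseteq \mathbf{k}$ guarantees); and since eigenvectors for distinct eigenvalues are linearly independent, the sum $\sum_{\lambda \in \Lambda} V_\lambda^{(u)}$ is automatically direct, so producing the components $v_\lambda$ indeed places $v$ in $\bigoplus_{\lambda \in \Lambda} V_\lambda^{(u)}$.
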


\begin{lem}
\label{lem:universal}
For $X \in \{A, B \}$, the primitive Frobenius $(\mathcal{M}, *)$-axial algebras that obey property $\mathcal{P}$ and that have shape $4X$ form a subcategory $\mathcal{C}_{4X}$ of $\mathcal{C}_0$. There exists a universal algebra $M_{4X}$ of $\mathcal{C}_{4X}$, and $\mathcal{C}_{4X}$ consists of all quotients of $M_{4X}$.  
\end{lem}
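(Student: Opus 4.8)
The plan is to realise the class $\mathcal{C}_{4X}$ as one of the subcategories $\mathcal{C}_{U,V}$ of $\mathcal{C}_0$ introduced above, for an explicit choice of $U \leq \hat{M}$ and $V \leq \hat{R}$, and then to invoke \cite[Proposition 4.4]{HRS15}. Since $\mathcal{C}_0$ is the category of all quotients of the universal $6$-generated commutative Frobenius algebra $\hat{M}$ by \cite[Proposition 4.1]{HRS15}, every primitive Frobenius $(\mathcal{M},*)$-axial algebra on the marked generators $\hat{A}$ is such a quotient, and the work is to pin down which relations cut out exactly those quotients that obey property $\mathcal{P}$ and have shape $4X$.

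First I would take $(U_{(\mathcal{M},*)}, V_{(\mathcal{M},*)})$, the data of \cite{HRS15} cutting out the universal primitive Frobenius $(\mathcal{M},*)$-axial algebra on $\hat{A}$. These encode, for each marked generator, idempotency $\hat{a}_i \cdot \hat{a}_i - \hat{a}_i$, the normalisation $\lambda_{[\hat{a}_i, \hat{a}_i]} - 1$ of the form, and the semisimplicity, eigenvalue and fusion conditions defining a primitive $(\mathcal{M},*)$-axis, all expressed through Lemma \ref{lem:espaces} as elements $f_\Lambda(\mathrm{ad}_{\hat{a}_i})(\hat{x}) \in \hat{M}$ together with the coefficient relations these force in $\hat{R}$. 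To them I would adjoin two further blocks of relations. The \emph{shape relations} fix the isomorphism type of the single dihedral subalgebra $U_0 = \langle\langle \hat{a}_1, \hat{a}_2\rangle\rangle$ to be $4X$: by Theorem \ref{thm:IPSS10} this subalgebra is already one of the types of Table \ref{tab:IPSS10}, and by Lemma \ref{lem:dihedral} (the permutation action of property $\mathcal{P}$ forcing $N=4$) it is of type $4A$ or $4B$, so selecting $4X$ amounts to adjoining to $U$ the differences ``$\mathrm{LHS}-\mathrm{RHS}$'' of the products listed for $4X$ in Table \ref{tab:IPSS10}, and to $V$ the corresponding entries $\lambda_{[\cdot,\cdot]} - (\text{value})$ of the form. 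The \emph{property-$\mathcal{P}$ relations} record the prescribed Miyamoto action: inside each outer dihedral subalgebra, Theorem \ref{thm:IPSS10}(ii) expresses the image of a marked generator under the relevant involution as a concrete axis of that subalgebra, and I adjoin to $U$ the differences between these images and the marked generators demanded by property $\mathcal{P}$, for instance the element of $\hat{M}$ realising $\hat{a}_2^{\tau(\hat{a}_1)} - \hat{a}_{-2}$.

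Let $U$ and $V$ be generated by the union of these relations. I would then verify the two inclusions giving $\mathcal{C}_{4X} = \mathcal{C}_{U,V}$. For ``$\subseteq$'': if $M \in \mathcal{C}_{4X}$ then by definition its generators are primitive $(\mathcal{M},*)$-axes carrying a Frobenius form, its Miyamoto involutions act as in property $\mathcal{P}$, and $U_0$ has type $4X$, so each generator of $U$ lies in $I_M$ and each generator of $V$ in $J_M$. For ``$\supseteq$'': any $M \in \mathcal{C}_0$ with $U \subseteq I_M$ and $V \subseteq J_M$ is, by the first block of relations, a primitive Frobenius $(\mathcal{M},*)$-axial algebra on $\hat{A}$; the shape relations give it shape $4X$, whereupon the inclusion diagram of Figure~1 together with Lemma \ref{lem:inclusions} forces the types of all remaining dihedral subalgebras; and the property-$\mathcal{P}$ relations give exactly the required Miyamoto action, so $M \in \mathcal{C}_{4X}$. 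Hence $\mathcal{C}_{4X} = \mathcal{C}_{U,V}$ is a subcategory of $\mathcal{C}_0$.

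Finally, to apply \cite[Proposition 4.4]{HRS15} and obtain the universal object $M_{4X} := M(U,V)$ whose quotients are exactly $\mathcal{C}_{4X}$, I must check that $J(U,V) \neq \hat{R}$, i.e.\ that the coefficient ring $\hat{R}/J(U,V)$ does not collapse to zero. I expect this non-triviality to be the main obstacle: it is equivalent to $\mathcal{C}_{4X}$ being non-empty, and a priori one must rule out that combining the fusion, shape and gluing relations forces $1 \in J(U,V)$. The cleanest way to secure it is to exhibit a single nonzero algebra of shape $4X$ obeying property $\mathcal{P}$, which is precisely what the explicit construction of $M_{4X}$ in the following sections provides. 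Granting this, Proposition 4.4 yields $M_{4X} = \hat{M}/I(U,V)$ with coefficient ring $\hat{R}/J(U,V)$, and $\mathcal{C}_{4X}$ consists of all its quotients, completing the proof.
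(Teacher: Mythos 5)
Your proposal is correct and follows essentially the same route as the paper: both realise $\mathcal{C}_{4X}$ as a subcategory $\mathcal{C}_{U,V}$ of $\mathcal{C}_0$ by adjoining to the axial data $(U_{(\mathcal{M},*)}, V_{(\mathcal{M},*)})$ relations encoding property $\mathcal{P}$ and the shape, and then invoke \cite[Proposition 4.4]{HRS15}. The only differences are minor: the paper encodes property $\mathcal{P}$ through the eigenvector conditions $f_{\{\frac{1}{32}\}}(\mathrm{ad}_{\hat{a}_{\pm i}})(\hat{a}_j - \hat{a}_{-j}) = 0$ and $f_{\{1,0,\frac{1}{4}\}}(\mathrm{ad}_{\hat{a}_{\pm i}})(\hat{a}_j + \hat{a}_{-j}) = 0$ of Lemma \ref{lem:espaces} (equivalent to your Miyamoto-image relations, and arguably cleaner since it avoids naming elements of the dihedral subalgebras), while your explicit attention to the hypothesis $J(U,V) \neq \hat{R}$ of \cite[Proposition 4.4]{HRS15} --- secured by exhibiting a nonzero algebra of shape $4X$ --- is a point of care that the paper's proof leaves implicit.
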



\begin{proof}
We will first define subsets $U \subseteq \hat{M}$ and $V \subseteq \hat{R}$ such that $\mathcal{C}_{4X} = \mathcal{C}(U,V)$ by expressing the conditions of property $\mathcal{P}$ as elements of $\hat{M}$ and $\hat{R}$. 

We let 
\begin{align*}
V_{\mathcal{P}} = V_{(\mathcal{M}, *)} \textrm{ and } U_{\mathcal{P}} = U_{(\mathcal{M}, *)} \cup \left\{ f_{\{\frac{1}{32}\}}(ad_{\hat{a}_{\pm i}})(\hat{a}_j - \hat{a}_{-j}) \right\} \cup \left\{ f_{\{1, 0, \frac{1}{4}\}}(ad_{\hat{a}_{\pm i}})(\hat{a}_j + \hat{a}_{-j}) \right\}
\end{align*}
where $i, j \in \{1, 2, 3\}$ and $i \neq j$. We will now show that if $M$ is an algebra in $\mathcal{C}(U_{(\mathcal{M}, *)}, V_{(\mathcal{M}, *)})$ then $M$ obeys property $\mathcal{P}$ if and only if $M$ also lies in $\mathcal{C}(U_{\mathcal{P}}, V_{\mathcal{P}})$.  

Fix $i, j \in \{1, 2, 3\}$ such that $i \neq j$ and suppose that $f_{\{\frac{1}{32}\}}(ad_{\hat{a}_{\pm i}})(\hat{a}_j - \hat{a}_{-j}) = 0$ and $f_{\{1, 0, \frac{1}{4}\}}(ad_{\hat{a}_{\pm i}})(\hat{a}_j + \hat{a}_{-j}) = 0$. Then, from Lemma \ref{lem:espaces},
\[
\hat{a}_j - \hat{a}_{-j} \in V_{\frac{1}{32}}^{(\hat{a}_{\pm i})} \textrm{ and } \hat{a}_j + \hat{a}_{-j} \in \bigoplus_{\lambda \in \{1, 0, \frac{1}{4}\}} V_\lambda^{(\hat{a}_{\pm i})}.
\]
and so $(\hat{a}_j - \hat{a}_{-j})^{\tau(a_{\pm i})} = \hat{a}_{-j} - \hat{a}_j$ and $(\hat{a}_j + \hat{a}_{-j})^{\tau(a_{\pm i})} = \hat{a}_j + \hat{a}_{-j}$. We can then infer that $\hat{a}_j^{\tau(a_{\pm i})} = \hat{a}_{-j}$. 

Conversely, if $\hat{a}_j^{\tau(a_{\pm i})} = \hat{a}_{-j}$ then it follows from the definition of $\tau(a_{\pm i})$ that $
\hat{a}_j - \hat{a}_{-j} \in V_{\frac{1}{32}}^{(\hat{a}_{\pm i})}$ and $\hat{a}_j + \hat{a}_{-j} \in \bigoplus_{\lambda \in \{1, 0, \frac{1}{4}\}} V_\lambda^{(\hat{a}_{\pm i})}$. Thus, from Lemma \ref{lem:espaces},
\[
f_{\{\frac{1}{32}\}}(ad_{\hat{a}_{\pm i}})(\hat{a}_j - \hat{a}_{-j}) = f_{\{1, 0, \frac{1}{4}\}}(ad_{\hat{a}_{\pm i}})(\hat{a}_j + \hat{a}_{-j}) = 0
\]
as required.

Futhermore, we note that requiring that an algebra has a certain shape is equivalent to putting restrictions on the elements of $\hat{M}$ and $\hat{R}$. These restrictions can be encoded as relations to be included in the sets $U$ and $V$. We do not explicitly give these relations; they can be read directly from the values in Table \ref{tab:IPSS10}. 

In particular, if we take $U$ and $V$ to be the union of the relations coming from the choice of the shape with $U_{\mathcal{P}}$ and $V_{\mathcal{P}}$ respectively then the category $\mathcal{C}(U,V)$ coincides with $\mathcal{C}_{4X}$. The final claim follows from \cite[Proposition 4.4]{HRS15}.
\end{proof}

Recall that $M_{4X}$ is equal to $\hat{M}/I(U,V)$ and has coefficient ring $R_{4X} = \hat{R}/J(U,V)$ for $U$ and $V$ as in Lemma \ref{lem:universal}. We let $\psi_{4X}$ and $\phi_{4X}$ denote the corresponding projections $\hat{M} \rightarrow M_{4X}$ and $\hat{R} \rightarrow R_{4X}$ and let $A := \{ a_1, a_{-1}, a_2, a_{-2}, a_3, a_{-3}\}$ denote the image of $\hat{A}$ under $\psi_{4X}$.

Note that the maps $\psi_{4X}$ and $\phi_{4X}$ determine the algebra product and Frobenius form on $M_{4X} = \psi_{4X}(\hat{M})$ in that
\begin{equation}
\label{eq:products}
u^{\psi_{4X}} \cdot v^{\psi_{4X}} =( u \cdot v)^{\psi_{4X}} \textrm{ and } \langle u^{\psi_{4X}}, v^{\psi_{4X}} \rangle = \langle u, v \rangle^{\phi_{4X}}
\end{equation}
for all $u, v \in \hat{M}$.

\subsection{Automorphisms of $M_{4X}$} 

Note that the group $G := \langle \tau(a) \mid a\in A \rangle \cong 2^2$ is a subgroup of $\mathrm{GL}(M_{4X})$ whose action on $M_{4X}$ preserves the algebra product and Frobenius form (from Propositions \ref{prop:algproduct} and \ref{prop:form}). This algebra also admits further symmetries, as we now explain. 

\begin{lem}
The permutation $\sigma \in \mathrm{Sym}(A)$ defined by
\[
\sigma = (a_1, \, a_2, \, a_3)(a_{-1}, \, a_{-2}, \, a_{-3})
\]
uniquely extends to an element $\psi_{\sigma} \in \mathrm{GL}(M_{4X})$ that preserves the algebra product and Frobenius form on $M_{4X}$ for $X \in \{A, B\}$. That is to say, there exists an automorphism $\phi_{\sigma}$ of $R_{4X}$ such that $\phi_{\sigma}$ preserves the subfield $\mathbb{R}$ of $R_{4X}$ and such that
\[
u^{\psi_{\sigma}} \cdot v^{\psi_{\sigma}} = (u \cdot v)^{\psi_{\sigma}} \textrm{ and } \langle u^{\psi_{\sigma}}, v^{\psi_{\sigma}} \rangle = \langle u, v \rangle^{\phi_{\sigma}} 
\]
\end{lem}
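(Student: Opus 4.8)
The plan is to use the universality of $M_{4X}$ established in Lemma \ref{lem:universal}: since $M_{4X} = \hat{M}/I(U,V)$ is a quotient of the universal object $\hat{M}$, any relabelling of the marked generators that preserves the defining relations descends to an automorphism of the quotient. First I would lift $\sigma$ to the free magma $\hat{X}$ by setting $\hat{a}_{\pm 1} \mapsto \hat{a}_{\pm 2} \mapsto \hat{a}_{\pm 3} \mapsto \hat{a}_{\pm 1}$ and extending over the magma product; this gives a bijection $\hat{\psi}_{\sigma}$ of the magma basis of $\hat{M}$. Since $\sigma$ preserves the elementary equivalences defining $\sim$, it permutes the classes $[\hat{x}, \hat{y}]$ and hence induces a ring automorphism $\hat{\phi}_{\sigma}$ of $\hat{R}$ that permutes the indeterminates $\lambda_{[\hat{x}, \hat{y}]}$ and fixes $\mathbb{R}$ pointwise. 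Extending $\hat{\psi}_{\sigma}$ to $\hat{M}$ $\hat{\phi}_{\sigma}$-semilinearly, the identities $u^{\hat{\psi}_{\sigma}} \cdot v^{\hat{\psi}_{\sigma}} = (u \cdot v)^{\hat{\psi}_{\sigma}}$ and $\langle u^{\hat{\psi}_{\sigma}}, v^{\hat{\psi}_{\sigma}} \rangle = \langle u, v \rangle^{\hat{\phi}_{\sigma}}$ hold automatically from the definitions of the product and form on $\hat{M}$, using that $\hat{\phi}_{\sigma}$ sends $\lambda_{[\hat{x}, \hat{y}]}$ to $\lambda_{[\sigma \hat{x}, \sigma \hat{y}]}$.

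The central step is to verify that $\hat{\psi}_{\sigma}(U) = U$ and $\hat{\phi}_{\sigma}(V) = V$, where $U$ and $V$ are the generating sets constructed in the proof of Lemma \ref{lem:universal}. These split into three families. The relations $U_{(\mathcal{M}, *)}$ and $V_{(\mathcal{M}, *)}$ defining the universal axial algebra of Monster type are stated uniformly for each marked generator, hence symmetric under every permutation of $\hat{A}$ and fixed setwise. The property-$\mathcal{P}$ relations $f_{\{\frac{1}{32}\}}(ad_{\hat{a}_{\pm i}})(\hat{a}_j - \hat{a}_{-j})$ and $f_{\{1, 0, \frac{1}{4}\}}(ad_{\hat{a}_{\pm i}})(\hat{a}_j + \hat{a}_{-j})$ are indexed by pairs $i \neq j$ in $\{1, 2, 3\}$; since $\hat{\psi}_{\sigma}$ is multiplicative and the polynomials $f_{\Lambda}$ have coefficients in $\mathbb{R}$, it intertwines $ad_{\hat{a}_c}$ with $ad_{\hat{a}_{\sigma(c)}}$ and sends $\hat{a}_j \pm \hat{a}_{-j}$ to $\hat{a}_{\sigma(j)} \pm \hat{a}_{-\sigma(j)}$, carrying the relation indexed by $(i, j)$ to that indexed by $(\sigma(i), \sigma(j))$ and merely permuting this family. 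Finally, the shape relations record the structure constants of Table \ref{tab:IPSS10} for each dihedral subalgebra; by the inclusion graph (Figure \ref{fig:inclusions}), $\sigma$ cyclically permutes the three type-$4X$ subalgebras $\langle \langle a_1, a_2 \rangle \rangle$, $\langle \langle a_2, a_3 \rangle \rangle$, $\langle \langle a_1, a_3 \rangle \rangle$ and the three subalgebras $\langle \langle a_i, a_{-i} \rangle \rangle$, carrying each auxiliary vector $v_{\rho(a_i, a_j)}$ (or $a_{\rho(a_i, a_j)}$) to its counterpart in the image, so the shape relations are permuted as well.

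With $\hat{\psi}_{\sigma}(U) = U$ and $\hat{\phi}_{\sigma}(V) = V$ in hand, stabilisation of the ideals follows formally. As $\hat{\psi}_{\sigma}$ fixes $U$ it maps the ideal $I_0 = \langle U \rangle$ onto itself; and since $\hat{\phi}_{\sigma}(\langle i, m \rangle) = \langle i^{\hat{\psi}_{\sigma}}, m^{\hat{\psi}_{\sigma}} \rangle$ with $i^{\hat{\psi}_{\sigma}} \in I_0$ and $m^{\hat{\psi}_{\sigma}} \in \hat{M}$, together with $\hat{\phi}_{\sigma}(V) = V$, it stabilises all the generators of $J(U,V)$, giving $\hat{\phi}_{\sigma}(J(U,V)) = J(U,V)$ (equality, since $\sigma$ has order $3$). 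Then $\hat{\psi}_{\sigma}$ fixes both $U$ and $J(U,V)\hat{M}$, so it stabilises $I(U,V)$. Hence $\hat{\psi}_{\sigma}$ and $\hat{\phi}_{\sigma}$ descend to the quotients, yielding $\psi_{\sigma} \in \mathrm{GL}(M_{4X})$ and a ring automorphism $\phi_{\sigma}$ of $R_{4X}$ fixing $\mathbb{R}$, and the compatibility identities pass to $M_{4X}$ exactly as in \eqref{eq:products}. Uniqueness holds because $A$ generates $M_{4X}$: any $\phi_{\sigma}$-semilinear product-preserving map agreeing with $\sigma$ on $A$ is forced on all iterated products, and $\phi_{\sigma}$ is itself pinned down on the generators of $R_{4X}$ by the values $\langle a^{\psi_{\sigma}}, b^{\psi_{\sigma}} \rangle$.

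I expect the main obstacle to be the verification in the second paragraph that $\hat{\psi}_{\sigma}$ preserves the shape relations, since these are not written out explicitly in Lemma \ref{lem:universal} but only read off from Table \ref{tab:IPSS10}. One must match the internal indexing of each dihedral subalgebra (the local axes $a_{-1}, a_0, a_1, a_2$ and the vector $v_{\rho}$) with the global axes $a_{\pm 1}, a_{\pm 2}, a_{\pm 3}$, and confirm that $\sigma$ carries every structure-constant relation of one $4X$-subalgebra precisely to the corresponding relation of its image, with the correct image of the auxiliary vector. Care is needed because the generating pairs overlap, so that each $a_i$ lies in several dihedral subalgebras; one should check that these identifications are globally consistent and that no relation is sent outside the relation set.
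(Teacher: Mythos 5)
Your proposal is correct and follows essentially the same route as the paper: lift $\sigma$ to the free magma $\hat{X}$, let it act on the indeterminates $\lambda_{[\hat{x},\hat{y}]}$ to get $\hat{\phi}_\sigma$, extend semilinearly to $\hat{M}$, check that $U$ and $V$ (hence $I(U,V)$ and $J(U,V)$) are stabilised, and descend to the quotient. Your treatment is in fact more detailed than the paper's, which compresses the verification that the Monster-type, property-$\mathcal{P}$ and shape relations are permuted by $\hat{\psi}_\sigma$ into a single sentence.
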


\begin{proof}
We first consider the algebra $\hat{M}$ and define the permutation $\hat{\sigma} \in \mathrm{Sym}(\hat{A})$ to be such that 
\[
\hat{\sigma} = (\hat{a}_1, \, \hat{a}_2, \, \hat{a}_3)(\hat{a}_{-1}, \, \hat{a}_{-2}, \, \hat{a}_{-3}).
\]
Then $\hat{\sigma}$ clearly extends to an automorphism $\hat{\psi}_\sigma$ of the free commutative magma $\hat{X}$. Moreover, it preserves the equivalence relation $\sim$ on $\hat{X} \times \hat{X}$ and so induces a permutation on the indeterminates $\lambda_{[\hat{x}, \hat{y}]}$ of the polynomial ring $\hat{R}$. This permutation induces an automorphism $\hat{\phi}_{\sigma}$ of $\hat{R}$ that fixes the field $\mathbb{R}$. We can now see that $\hat{\psi}_\sigma$ uniquely extends to $\hat{M}$ via
\[
\left( \sum_{\hat{x} \in \hat{X}} \alpha_{\hat{x}} \hat{x} \right)^{\hat{\psi}_{\sigma}} = \sum_{\hat{x} \in \hat{X}} \alpha_{\hat{x}}^{\hat{\phi}_{\sigma}} \hat{x}^{\hat{\psi}_{\sigma}}.
\]

Crucially, the maps $\hat{\psi}_\sigma$ and $\hat{\phi}_\sigma$ preserve the sets $U$ and $V$ from Lemma \ref{lem:universal} and therefore also the ideals $I(U,V)$ and $J(U,V)$. In particular, these maps descend to automorphisms of $M_{4X}$ and $R_{4X}$ as required.
\end{proof}

\section{The algebra $M_{4B}$}
\label{sec:4B}

\begin{thm}
The algebra $M_{4B}$ is $7$-dimensional. It has basis
\[
B := \{ a_1, a_{-1}, a_2, a_{-2}, a_3, a_{-3}, a_{\rho}\}
\]
where $a_{\rho} := a_1 + a_{-1} - 8a_1 \cdot a_{-1}$ and its coefficient ring $R_{4B}$ is equal to $\mathbb{R}$.
\end{thm}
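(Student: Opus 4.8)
The plan is to exhibit an explicit spanning set for $M_{4B}$, prove it is closed under multiplication, and then establish its linear independence using the Frobenius form. First I would use the shape to pin down the dihedral subalgebras: any two of the six generators lie in one of the dihedral subalgebras listed in Section~\ref{sec:shape}, and since the shape is $4B$, each $\langle\langle a_i, a_j \rangle\rangle$ with $|i| \neq |j|$ is of type $4B$ while each $\langle\langle a_i, a_{-i} \rangle\rangle$ is of type $2A$ (by Lemma~\ref{lem:inclusions} and the inclusion $2A \hookrightarrow 4B$). Reading off the $2A$-row of Table~\ref{tab:IPSS10}, the subalgebra $\langle\langle a_i, a_{-i} \rangle\rangle$ contains the $2A$-axis $a_\rho^{(i)} := a_i + a_{-i} - 8\,a_i \cdot a_{-i}$; in particular $a_\rho = a_\rho^{(1)}$. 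Thus $B = \{a_1, a_{-1}, a_2, a_{-2}, a_3, a_{-3}\} \cup \{a_\rho^{(1)}\}$, and the crux is to show that the three a priori distinct $2A$-axes $a_\rho^{(1)}, a_\rho^{(2)}, a_\rho^{(3)}$ all coincide.

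The key step is exactly this coincidence. Consider the $4B$-subalgebra $\langle\langle a_1, a_2 \rangle\rangle$ and label its four axes $b_0 = a_1$, $b_1 = a_2$, $b_2 = a_{-1}$, $b_3 = a_{-2}$ as in Table~\ref{tab:IPSS10}, so that $\langle\langle b_0, b_2 \rangle\rangle = \langle\langle a_1, a_{-1} \rangle\rangle$ and $\langle\langle b_1, b_3 \rangle\rangle = \langle\langle a_2, a_{-2} \rangle\rangle$ are its two $2A$-subalgebras, with $2A$-axes $a_\rho^{(1)}$ and $a_\rho^{(2)}$ respectively. From the $4B$-row the product is $b_0 \cdot b_1 = \frac{1}{2^6}(b_0 + b_1 - b_2 - b_3 + a_\rho^{(1)})$. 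The symmetry between the two generators $b_0$ and $b_1$ (noted after Theorem~\ref{thm:IPSS10}) fixes the left-hand side, since it sends $b_0 \cdot b_1$ to $b_1 \cdot b_0$, while on the right-hand side it interchanges $b_0 \leftrightarrow b_1$ and $b_2 \leftrightarrow b_3$ and hence sends $a_\rho^{(1)} = a_{\rho(b_0, b_2)}$ to $a_{\rho(b_1, b_3)} = a_\rho^{(2)}$. Comparing the product with its image forces $a_\rho^{(1)} = a_\rho^{(2)}$. Repeating this argument inside $\langle\langle a_1, a_3 \rangle\rangle$, or transporting by the automorphism $\psi_\sigma$ induced by $\sigma = (a_1, a_2, a_3)(a_{-1}, a_{-2}, a_{-3})$ (which cyclically permutes the $a_\rho^{(i)}$), yields $a_\rho^{(1)} = a_\rho^{(2)} = a_\rho^{(3)} =: a_\rho$.

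Next I would show that $\mathrm{span}_{\mathbb{R}}(B)$ is a subalgebra, whence $M_{4B} = \mathrm{span}_{\mathbb{R}}(B)$ and $\dim M_{4B} \leq 7$. Every product $a_i \cdot a_j$ lies in the dihedral subalgebra generated by $a_i$ and $a_j$, whose structure constants are the explicit real values of Table~\ref{tab:IPSS10}: the $4B$-products reintroduce only $a_\rho$ and the four axes involved, and the $2A$-products only $a_\rho$ and the two axes involved, so all such products lie in $\mathrm{span}_{\mathbb{R}}(B)$. Because $a_\rho$ is simultaneously the $2A$-axis of all three subalgebras $\langle\langle a_i, a_{-i} \rangle\rangle$, each product $a_i \cdot a_\rho$ may be computed inside $\langle\langle a_i, a_{-i} \rangle\rangle$ and again lands in $\mathrm{span}_{\mathbb{R}}(B)$, while $a_\rho \cdot a_\rho = a_\rho$. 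Since every structure constant that arises is one of the determined real numbers of the table, the projection $\hat{R} \to R_{4B}$ sends each indeterminate to a fixed real value, giving $R_{4B} = \mathbb{R}$.

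Finally, for the lower bound I would compute the Gram matrix of the Frobenius form on $B$ from Table~\ref{tab:IPSS10}: $\langle a_i, a_i \rangle = \langle a_\rho, a_\rho \rangle = 1$, $\langle a_i, a_{-i} \rangle = \langle a_i, a_\rho \rangle = \frac{1}{2^3}$, and $\langle a_i, a_j \rangle = \frac{1}{2^6}$ for $|i| \neq |j|$. Verifying that this explicit $7 \times 7$ real matrix is nonsingular shows $B$ is linearly independent, so $\dim M_{4B} \geq 7$ and hence $\dim M_{4B} = 7$. The main obstacle is precisely the coincidence $a_\rho^{(1)} = a_\rho^{(2)} = a_\rho^{(3)}$: without it the candidate set is not closed under multiplication and the dimension count breaks down, so the generator-swap symmetry argument of the second paragraph is the heart of the proof, while the remaining verifications (closure and nonsingularity of the Gram matrix) are routine computations with the tabulated values.
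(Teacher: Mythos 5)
Your strategy coincides with the paper's own: use the shape to pin down the dihedral subalgebras, show that the three $2A$-axes $a_\rho^{(i)} = a_i + a_{-i} - 8\,a_i \cdot a_{-i}$ coincide, deduce that the span of $B$ is closed under multiplication so that $M_{4B} = \mathrm{span}(B)$ with all structure constants forced, and then establish the lower bound on the dimension. Your handling of the coincidence step is in fact more explicit than the paper's: the paper simply asserts that containment of $\langle\langle a_i, a_{-i} \rangle\rangle$ and $\langle\langle a_j, a_{-j} \rangle\rangle$ in the common $4B$ algebra ``implies'' $a_{\rho_i} = a_{\rho_j}$ (and miswrites the type of that containing algebra as $4A$), whereas you derive it by applying the generator-swap symmetry to the product $b_0 \cdot b_1$; since the swap is an algebra automorphism it carries the $2A$-axis of $\langle\langle b_0, b_2 \rangle\rangle$ to that of $\langle\langle b_1, b_3 \rangle\rangle$, and commutativity of the product forces the two to be equal. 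That argument is valid and is a genuine improvement in rigour on this step.

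The gap is in your final step. Nonsingularity of the $7 \times 7$ real Gram matrix proves linear independence of $B$ in $M_{4B}$ only if its entries really are the values of the Frobenius form on $M_{4B}$, i.e.\ only if the canonical map $\mathbb{R} \rightarrow R_{4B}$ is injective. Your justification of $R_{4B} = \mathbb{R}$ (``every structure constant is a determined real number'') only shows that each indeterminate of $\hat{R}$ is congruent modulo $J(U,V)$ to a real number, hence that $R_{4B}$ is a quotient of $\mathbb{R}$ --- that is, $R_{4B}$ is either $\mathbb{R}$ or the zero ring. All of the table computations are derivations of relations that must hold in every algebra of the category $\mathcal{C}_{4B}$, and no amount of such derivation can rule out that $J(U,V) = \hat{R}$, in which case $M_{4B} = 0$ and the ``Gram matrix'' carries no information (note that \cite[Proposition 4.4]{HRS15} needs the hypothesis $J(U,V) \neq \hat{R}$). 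What excludes this total collapse is precisely the step your proposal dismisses: one must check that the explicit $7$-dimensional algebra defined by the forced structure constants genuinely satisfies all of the axioms --- the Monster fusion rule for each axis, primitivity, the Frobenius property of the form, property $\mathcal{P}$ and shape $4B$. This is the content of the paper's closing sentence; it exhibits a nonzero member of $\mathcal{C}_{4B}$, hence a $7$-dimensional quotient of $M_{4B}$, and together with your spanning argument this yields both $R_{4B} = \mathbb{R}$ and $\dim M_{4B} = 7$. Your Gram-matrix computation is then a legitimate alternative phrasing of the lower bound, but it cannot replace this consistency check.
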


\begin{proof}
The algebra $M_{4B}$ contains three distinct dihedral subalgebras of type $2A$; $\langle \langle a_i, a_{-i} \rangle \rangle$ for $1 \leq i \leq 3$. Each of these dihedral subalgebras is of dimension $3$ and contains a third basis vector $a_{\rho_i} := a_i + a_{-i} - 8 a_i \cdot a_{-i}$ for $i \in \{1, 2, 3\}$. 

However, if $i, j \in \{1, 2, 3\}$ such that $i \neq j$, then both $\langle \langle a_i, a_{-i} \rangle \rangle$ and $\langle \langle a_j, a_{-j} \rangle \rangle$ are contained in the dihedral algebra $\langle \langle a_i, a_j \rangle \rangle$, which is of type $4A$. In particular, this then implies that $a_{\rho_i} = a_{\rho_j}$. Thus we let $a_{\rho}$ denote the vector $a_{\rho_1} = a_{\rho_2} = a_{\rho_2}$.

Moreover, for all $a \in A$, there exists a dihedral algebra $U$ such that $a, a_{\rho} \in U$ and so the values for all algebra products on $B$ are given by the known values of the dihedral algebras. This the implies that the vector space is closed under multiplication and $M_{4B} = \langle B \rangle$. The value of the Frobenius form on $M_{4B}$ is also uniquely determined by the known values of the dihedral algebras and therefore the coefficient ring of the algebra is $\mathbb{R}$.

We can then use these values to check that $M_{4B}$ satisfies the definition of an axial algebra, and we are done.
\end{proof}

\section{The algebra $M_{4A}$}
\label{sec:4A}

\begin{thm}
\label{thm:4A}
The algebra $M_{4A}$ is $12$-dimensional. It has basis 
\[
B := A \cup \{ v_{(1,2)}, v_{(1,3)}, v_{(2,3)} \} \cup \{ a_1 \cdot v_{(2,3)}, a_2 \cdot v_{(1,3)},  a_3 \cdot v_{(1,2)}\}
\]
where 
\[
v_{(i,j)} = a_i + a_j + \frac{1}{3}a_{-i} + \frac{1}{3}a_{-j} - \frac{2^6}{3}a_i \cdot a_j
\]
for $i,j \in \{1, 2, 3\}$ such that $i \neq j$. Its coefficient ring $R_{4A}$ is equal to $\mathbb{R}[t]$ where $t := \langle a_1, v_{(2,3)} \rangle$. 
\end{thm}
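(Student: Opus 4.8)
The plan is to construct $M_{4A}$ from the three $4A$ dihedral subalgebras it must contain and then verify, using the symmetries and the Frobenius form, that the twelve listed vectors span a closed subalgebra and are independent. First I would pin down those subalgebras: by Lemma \ref{lem:dihedral} each cross pair generates a type $4A$ algebra, so I set $U_{ij} := \langle \langle a_i, a_j \rangle \rangle$ for distinct $i,j \in \{1,2,3\}$, and by Lemma \ref{lem:inclusions} the subalgebras $\langle \langle a_i, a_{-i} \rangle \rangle$ are of type $2B$. Since the two $2B$ subalgebras of a $4A$ algebra are the ``opposite'' axis pairs (those with product $0$), the four axes of $U_{ij}$ are forced to be $\{a_i, a_{-i}, a_j, a_{-j}\}$. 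Matching these to the Table \ref{tab:IPSS10} labelling and solving the $4A$ formula $a_0 \cdot a_1 = \frac{1}{2^6}(3a_0 + 3a_1 + a_2 + a_{-1} - 3 v_\rho)$ for $v_\rho$ yields exactly the stated $v_{(i,j)}$. This produces the nine vectors $A \cup \{v_{(1,2)}, v_{(1,3)}, v_{(2,3)}\}$ together with every product internal to a single $U_{ij}$, in particular $a_i \cdot a_{-i} = 0$, the products $a_i \cdot v_{(i,j)}$, and $v_{(i,j)} \cdot v_{(i,j)} = v_{(i,j)}$.

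Next I would use the symmetry group to cut down the products still to be computed. The Miyamoto group $G \cong 2^2$ and the order-$3$ automorphism $\psi_\sigma$ act on the candidate basis, and under $\psi_\sigma$ the three vectors $a_1 \cdot v_{(2,3)}$, $a_2 \cdot v_{(1,3)}$, $a_3 \cdot v_{(1,2)}$ form a single orbit. The ``partner'' products such as $a_{-1} \cdot v_{(2,3)}$ are then images under the relevant Miyamoto involution: since $v_{(2,3)}$ is a $4A$-axis it is fixed by $\tau(a_2)$ (which maps $v_{\rho(a_2,a_3)}$ to $v_{\rho(a_2,a_{-3})} = v_{\rho(a_2,a_3)}$ by Proposition \ref{prop:axes}), while $\tau(a_2)$ swaps $a_1 \leftrightarrow a_{-1}$, so $a_{-1} \cdot v_{(2,3)} = (a_1 \cdot v_{(2,3)})^{\tau(a_2)}$. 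The only products not yet determined are thus the cross products $a_i \cdot v_{(j,k)}$ with $i$ the index missing from $\{j,k\}$, which are genuinely new and which I adjoin as the final three basis vectors; the overlapping products $v_{(i,j)} \cdot v_{(k,l)}$; and the products among the three new vectors. The single new Frobenius parameter enters here: I set $t := \langle a_1, v_{(2,3)} \rangle$, and by $G$- and $\psi_\sigma$-invariance of the form all analogous cross-values equal $t$, while every form value inside a dihedral subalgebra is a fixed rational from Table \ref{tab:IPSS10}.

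The heart of the proof is then to compute these cross-dihedral products and close the algebra over $\mathbb{R}[t]$. For each such product I would resolve it against the axes using associativity and orthogonality of the form (Propositions \ref{prop:orthogonality} and \ref{prop:form}), the projection formula (Lemma \ref{lem:projection}), and the explicit eigenvector bases of Table \ref{tab:dihedralevecs}: projecting onto each $\langle a_i \rangle$ recovers the $a_i$-coefficient as a form value, and the fusion rule together with the known internal products determines the rest. Carrying this out expresses each of $v_{(i,j)} \cdot v_{(k,l)}$, $a_i \cdot (a_j \cdot v_{(k,l)})$, and $(a_i \cdot v_{(j,k)}) \cdot (a_l \cdot v_{(m,n)})$ as an $\mathbb{R}[t]$-combination of $B$. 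This shows $\langle B \rangle_R$ is multiplicatively closed; since it contains the generating set $A$, it equals $M_{4A}$, so $\dim M_{4A} \leq 12$ and all structure constants lie in $\mathbb{R}[t]$, whence $R_{4A}$ is a quotient of $\mathbb{R}[t]$.

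Finally I would establish $\dim M_{4A} = 12$ and $R_{4A} = \mathbb{R}[t]$ exactly. Linear independence of $B$ I would obtain by specialising $t$ to a generic real value and checking that the Gram matrix of the Frobenius form (or a suitable coordinate projection) is nonsingular there. To see that $t$ is a true indeterminate rather than being pinned down, I would verify that all the Monster fusion relations and all the associativity relations $\langle i, m \rangle$ generating $J(U,V)$ hold \emph{identically} in $t$, so that no relation collapses to a polynomial equation in $t$ alone; this is confirmed by the fact that every real value of $t$ is realised by an honest algebra $M(t)$ later in the paper, forcing $R_{4A} = \mathbb{R}[t]$. I expect the main obstacle to be precisely the cross-dihedral computations of the third paragraph, namely $v_{(i,j)} \cdot v_{(k,l)}$ and the products of the three new vectors: these are governed by no single dihedral subalgebra, and one must check both that they close within the twelve-dimensional span with all coefficients polynomial in $t$ and that the closure imposes no spurious relation on $t$.
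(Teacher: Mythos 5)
Your high-level route --- extracting $v_{(i,j)}$ and the internal products from the type $4A$ dihedral data, adjoining the three cross products $a_i\cdot v_{(j,k)}$, exploiting $G$ and $\psi_\sigma$ to reduce the number of computations, and closing the algebra by fusion-rule computations together with associativity of the Frobenius form --- is the same as the paper's, and your first paragraph (identification of the four axes of $U_{ij}$ and the formula for $v_{(i,j)}$) is correct. The genuine gap is in how you dispose of the ``partner'' products $a_{-i}\cdot v_{(j,k)}$. Your $12$-element set $B$ is \emph{not} $G$-invariant: $\tau(a_2)$ sends the basis vector $a_1\cdot v_{(2,3)}$ to $a_{-1}\cdot v_{(2,3)}$, which does not lie in $B$. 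Consequently the observation that $a_{-1}\cdot v_{(2,3)} = (a_1\cdot v_{(2,3)})^{\tau(a_2)}$ is circular as a spanning argument: it exhibits the unknown vector as the image of a basis vector under an automorphism, but it gives no expression for it as an $\mathbb{R}[t]$-combination of $B$, which is exactly what is needed both for closure and for $\dim M_{4A}\le 12$.

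What is missing is the content of Proposition \ref{prop:nullspace}: the three linear dependencies
\[
(a_i - a_{-i})\cdot v_{(j,k)} \;=\; t\,(a_i - a_{-i}), \qquad \{i,j,k\}=\{1,2,3\},
\]
i.e.\ that $a_i - a_{-i}$ is an eigenvector of $\mathrm{ad}_{v_{(j,k)}}$ with eigenvalue $t$. The paper first establishes closure on the $G$- and $\sigma$-invariant $15$-element set $\bar B = B\cup\{a_{-1}\cdot v_{(2,3)},\, a_{-2}\cdot v_{(1,3)},\, a_{-3}\cdot v_{(1,2)}\}$ (Propositions \ref{prop:products1}--\ref{prop:products5}) and only then collapses $\bar B$ to $B$ via these relations, which are proved by a genuine fusion-rule argument: the vectors $\alpha_{-1} = -\tfrac{3t}{4}a_1 - \tfrac14 v_{(2,3)} + a_1\cdot v_{(2,3)}$ and $a_{-1}$ both lie in $M_0^{(a_1)}$, so their product must again be a $0$-eigenvector of $a_1$; yet computing $a_1\cdot(\alpha_{-1}\cdot a_{-1})$ from the already-known products yields a nonzero multiple of $(a_3-a_{-3})\cdot v_{(1,2)} - t(a_3-a_{-3})$, forcing that vector to vanish. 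These dependencies cannot be obtained from the symmetries, nor from projections onto the axes (projection only recovers the coefficient of $a_i$ in the $a_i$-eigenspace decomposition, not a full expansion); and they are needed elsewhere too --- for instance $v_{(1,2)}\cdot v_{(1,3)}$ naturally comes out involving $(a_2+a_{-2})\cdot v_{(1,3)}$ and $(a_3+a_{-3})\cdot v_{(1,2)}$, which only reduce to the span of $B$ through these relations, and the eigenvalue $t$ they exhibit is precisely the one reappearing in the $4A$ fusion rule of Theorem \ref{thm:4Afusion}. Without identifying and proving these relations, neither your spanning claim nor your dimension count goes through; with them, your plan essentially becomes the paper's proof.
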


\begin{thm}
\label{thm:4Ab}
There exists an infinite family of $12$-dimensional axial algebras of Monster type which we denote $\{M(t) \}_{t \in \mathbb{R}}$. If $t \notin \{0, \frac{1}{6}, \frac{9}{4} \}$ then the algebra $M(t)$ is simple. Otherwise, if $t \in \{0, \frac{1}{6}\}$, $M(t)$ contains a $3$-dimensional ideal and if $t = \frac{9}{4}$ then $M(t)$ contains a $5$-dimensional ideal.
\end{thm}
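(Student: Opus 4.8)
The plan is to build the family by a single specialisation and then reduce the entire ideal-theoretic statement to the (non)degeneracy of the Frobenius form, encoded in an explicit Gram matrix in $t$.

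First I construct the family. By Theorem \ref{thm:4A}, $M_{4A}$ is free of rank $12$ over $R_{4A}=\mathbb{R}[t]$ with the displayed basis $B$, and all structure constants and all values of the Frobenius form lie in $\mathbb{R}[t]$. For each $t_0\in\mathbb{R}$ I define $M(t_0):=M_{4A}\otimes_{\mathbb{R}[t]}\mathbb{R}$, where $\mathbb{R}$ is made an $\mathbb{R}[t]$-module by evaluating $t\mapsto t_0$; concretely, one specialises every structure constant at $t=t_0$. Freeness makes $M(t_0)$ a $12$-dimensional real algebra with basis the image of $B$, and the form descends. To see each $a_i$ remains a $(\mathcal{M},*)$-axis, note that the minimal-polynomial identity $\prod_{\lambda\in\mathcal{M}}(\mathrm{ad}_{a_i}-\lambda)=0$ and the fusion inclusions $V^{(a_i)}_\lambda\cdot V^{(a_i)}_\mu\subseteq V^{(a_i)}_{\lambda*\mu}$ hold in $M_{4A}$ as polynomial identities in $t$, hence survive specialisation; since the eigenvalues $1,0,\tfrac14,\tfrac{1}{32}$ are distinct constants, the specialised $\mathrm{ad}_{a_i}$ stays semisimple with spectrum in $\mathcal{M}$. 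This yields the family $\{M(t)\}_{t\in\mathbb{R}}$, and one records that primitivity (that $V^{(a_i)}_1=\langle a_i\rangle$) likewise persists for all the values of $t$ occurring in the statement.

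Next I prove the structural principle that every proper ideal lies in the radical of the form. Let $\mathrm{Rad}=\{v\in M(t):\langle v,M(t)\rangle=0\}$; associativity of the form makes $\mathrm{Rad}$ an ideal, with no appeal to primitivity. I claim every proper ideal $I$ satisfies $I\subseteq\mathrm{Rad}$. If $a\in A$ is an axis with $a\notin I$, then $\langle a,I\rangle=0$: for $v\in I$, the ideal $I$ is $\mathrm{ad}_a$-invariant, so each eigencomponent $v_\lambda$ is obtained from $v$ by a Lagrange-interpolation polynomial in $\mathrm{ad}_a$ (the eigenvalues being distinct) and hence lies in $I$; in particular $v_1\in I$, and by Lemma \ref{lem:projection} $v_1=\langle a,v\rangle a$, so $\langle a,v\rangle a\in I$, forcing $\langle a,v\rangle=0$ as $a\notin I$. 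Consequently, if $I$ contains no generator, induction on the length of products of axes, via $\langle w_1\cdot w_2,I\rangle=\langle w_1,w_2\cdot I\rangle\subseteq\langle w_1,I\rangle$, gives $\langle w,I\rangle=0$ for all $w$ in the axis-generated algebra $M(t)$, i.e. $I\subseteq\mathrm{Rad}$. If instead some $a_i\in I$, I invoke Table \ref{tab:IPSS10}: the pairs $\langle a_j,a_{-j}\rangle$ vanish (type $2B$) while every remaining pair equals $\tfrac{1}{2^5}\neq0$ (type $4A$), so the non-orthogonality graph on $A$ is $K_6$ minus a perfect matching, which is connected; applying $\langle a_k,I\rangle=0$ to any missing generator $a_k$ drives all six generators into $I$, whence $I=M(t)$, contradicting properness. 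Thus $\mathrm{Rad}$ is the unique maximal ideal, and $M(t)$ is simple precisely when $\mathrm{Rad}=0$.

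Everything then reduces to the Gram matrix $\mathcal{G}(t)=\bigl(\langle b,b'\rangle\bigr)_{b,b'\in B}$, whose entries I assemble from the dihedral data of Table \ref{tab:IPSS10}, the defining formula for $v_{(i,j)}$, and $t=\langle a_1,v_{(2,3)}\rangle$. By the previous paragraph $M(t)$ is simple iff $\det\mathcal{G}(t)\neq0$, and at a singular $t_0$ the ideal is $\mathrm{Rad}=\ker\mathcal{G}(t_0)$ of dimension $\mathrm{corank}\,\mathcal{G}(t_0)$. I then factor $\det\mathcal{G}(t)$ (it is not identically zero, since the form is positive definite for $0<t<\tfrac16$ by the later analysis), verify its only real roots are $0,\tfrac16,\tfrac94$, and compute the corank at each: $3$ at $t_0=0$ and $t_0=\tfrac16$, and $5$ at $t_0=\tfrac94$, exhibiting explicit kernel vectors. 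The automorphism group generated by $G\cong 2^2$ and the order-$3$ map $\psi_\sigma$ acts $\mathbb{R}[t]$-linearly (as $\phi_\sigma$ fixes $t$), and block-diagonalises $\mathcal{G}(t)$ along its common eigenspaces, which cuts the calculation substantially. The main obstacle is exactly this explicit evaluation: computing and factoring the $12\times12$ determinant over $\mathbb{R}[t]$ and pinning down the rank at the three degenerate values, which is best handled by computer algebra; the reductions above are conceptual and routine, the remaining delicate point being to confirm that specialisation preserves the axis and primitivity properties throughout.
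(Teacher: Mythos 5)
Your proposal follows essentially the same route as the paper: you establish that every proper ideal of $M(t)$ lies in the radical of the Frobenius form (the paper's Lemma \ref{lem:ideal}, proved via Lemma \ref{lem:projection}, invariance of eigencomponents under an ideal, and connectedness of the projection graph in Figure 2), and then reduce both simplicity and the exceptional ideals to the vanishing and corank of the Gram determinant $-t^3(6t-1)^3(4t-9)^6/(2^{19}\cdot 3^3)$, computed by computer algebra. The only differences are cosmetic: you prove inline that an ideal containing no axes lies in $M(t)^{\perp}$ (the paper cites Theorem \ref{thm:radical} from \cite{KMS18}), and you spell out the specialisation $t \mapsto t_0$ and its preservation of the axial structure in more detail than the paper does.
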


Henceforth, we write $M := M_{4A}$. 
 
\subsection{ Algebra products }

To begin, we let 
\[
\bar{B} := B \cup \{ a_{-1} \cdot v_{(2,3)}, a_{-2} \cdot v_{(1,3)},  a_{-3} \cdot v_{(1,2)}\}.
\]
Recall that the algebra product $\cdot$ and the Frobenius form $\langle \, , \, \rangle$ on $M$ are defined as in (\ref{eq:products}). Throughout this section, we will let $t := \langle a_1, v_{(2,3)} \rangle$.

\begin{prop}
\label{prop:frobeniusform1}
Some values of the Frobenius form $\langle \, , \, \rangle $ on the vectors of $\bar{B}$ are as given in Table 3.
\end{prop}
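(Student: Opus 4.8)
The plan is to obtain each entry of Table 3 from three ingredients---the normalisation $\langle a,a\rangle=1$, the dihedral data of Table \ref{tab:IPSS10}, and the associativity identity $\langle u, v\cdot w\rangle=\langle u\cdot v,w\rangle$---organised throughout by the symmetries $G\cong 2^2$ and $\sigma$. The starting point is to identify each $v_{(i,j)}$ with the $4A$-axis $v_{\rho(a_i,a_j)}$ of $\langle\langle a_i,a_j\rangle\rangle$: solving the $4A$ product $a_0\cdot a_1=\frac{1}{2^6}(3a_0+3a_1+a_2+a_{-1}-3v_{\rho(a_0,a_1)})$ for $v_{\rho(a_0,a_1)}$ reproduces exactly the defining expression for $v_{(i,j)}$. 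With this in hand, any two vectors of $\bar B$ lying in a common dihedral subalgebra have their form value listed directly in Table \ref{tab:IPSS10}: opposite pairs $\langle a_i,a_{-i}\rangle$ lie in a $2B$ block and give $0$; every other $\langle a_i,a_{\pm j}\rangle$ with $i\neq j$ lies in a $4A$ block and gives $\frac{1}{32}$; and the $4A$ row yields $\langle a_i,v_{(i,j)}\rangle=\frac{3}{8}$ together with $\langle v_{(i,j)},v_{(i,j)}\rangle=2$.

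Next I would treat the \emph{external} forms between an axis and a $4A$-axis of a different block. By definition $\langle a_1,v_{(2,3)}\rangle=t$. To propagate this I apply $\tau(a_2)$, which interchanges $a_1$ and $a_{-1}$ while fixing $v_{(2,3)}$ (the $4A$-axis lies in the sum of the $1$-, $0$- and $\frac14$-eigenspaces of $a_2$ by Table \ref{tab:dihedralevecs}); since $\tau(a_2)$ preserves the form by Proposition \ref{prop:form}, we get $\langle a_{-1},v_{(2,3)}\rangle=t$. The order-three symmetry $\sigma$ then carries $\langle a_1,v_{(2,3)}\rangle$ to $\langle a_2,v_{(1,3)}\rangle$ and $\langle a_3,v_{(1,2)}\rangle$; as the induced automorphism of $R_{4A}=\mathbb{R}[t]$ (Theorem \ref{thm:4A}) has order dividing three and fixes $\mathbb{R}$, it fixes the generator $t$, so all of these external forms equal $t$.

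The entries involving the product vectors $a_{\pm l}\cdot v_{(j,k)}$ (with $\{l,j,k\}=\{1,2,3\}$) I would reduce by relocating the product across the form and invoking the idempotencies $a_l\cdot a_l=a_l$, $v_{(j,k)}\cdot v_{(j,k)}=v_{(j,k)}$ and the orthogonality $a_l\cdot a_{-l}=0$. For instance
\[
\langle a_l,a_l\cdot v_{(j,k)}\rangle=\langle a_l,v_{(j,k)}\rangle=t,\qquad \langle a_{-l},a_l\cdot v_{(j,k)}\rangle=\langle a_{-l}\cdot a_l,v_{(j,k)}\rangle=0,
\]
\[
\langle v_{(j,k)},a_l\cdot v_{(j,k)}\rangle=\langle a_l,v_{(j,k)}\cdot v_{(j,k)}\rangle=\langle a_l,v_{(j,k)}\rangle=t,
\]
and similarly for the remaining such entries whenever the relocated product lands inside a single dihedral block. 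Each reduction expresses the entry through the local data and $t$ found above.

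The main obstacle is the family of genuinely global forms that straddle all three dihedral blocks, the prototype being $\langle v_{(1,2)},v_{(2,3)}\rangle$, together with the mixed entries such as $\langle a_j,a_l\cdot v_{(j,k)}\rangle$ that expand into it. Writing $Y=\langle v_{(1,2)},v_{(2,3)}\rangle$ and $Z=\langle a_1\cdot a_2,v_{(2,3)}\rangle$, expanding $v_{(1,2)}$ through its definition and expanding $a_1\cdot a_2$ by the $4A$ product gives two linear relations between $Y$ and $Z$; but back-substitution collapses them into the tautology $Y=Y$, so the Frobenius axiom and the dihedral data alone are blind to these values. They are therefore not determined at this stage and are excluded from Table 3, their evaluation being deferred until the full multiplication of $M$ is available from the eigenvector (fusion) conditions. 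The real content of the proposition is thus to isolate precisely the maximal set of form values that the dihedral structure and associativity already fix, and to record them uniformly as elements of $\mathbb{R}[t]$.
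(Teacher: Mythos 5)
Your opening reductions are sound and match the paper's (one-sentence) proof, which derives everything from the dihedral values in Table \ref{tab:IPSS10} together with the identity $\langle u, v\cdot w\rangle=\langle u\cdot v,w\rangle$: the identification of $v_{(i,j)}$ with the $4A$-axis $v_{\rho(a_i,a_j)}$, the $\tau$- and $\sigma$-symmetry bookkeeping, and the idempotent/$2B$ relocations correctly yield $\langle a_1,v_{(2,3)}\rangle=t$, $\langle a_1,a_1\cdot v_{(2,3)}\rangle=t$, $\langle a_{-1},a_1\cdot v_{(2,3)}\rangle=0$ and $\langle v_{(2,3)},a_1\cdot v_{(2,3)}\rangle=t$. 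But your final paragraph contains a genuine error: the values $\langle v_{(1,2)},v_{(2,3)}\rangle$, $\langle a_2,a_1\cdot v_{(2,3)}\rangle$ and $\langle v_{(1,2)},a_1\cdot v_{(2,3)}\rangle$ are not ``excluded from Table 3'' --- they are three of its seven entries, namely $-\frac{8t}{3}+\frac12$, $\frac{3t}{2^4}$ and $-\frac{t}{2^2}$ --- and, contrary to your claim, they \emph{are} determined by exactly the ingredients you allow. Your circularity analysis fails because you only expanded $a_1\cdot a_2$ inside the block $\langle\langle a_1,a_2\rangle\rangle$, where the computation is indeed tautological (the definition of $v_{(1,2)}$ is a rearrangement of that very product formula). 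The missing move is to relocate the product across the form into a \emph{different} dihedral block: by the Frobenius property,
\[
\langle a_1\cdot a_2,\, v_{(2,3)}\rangle=\langle a_1,\, a_2\cdot v_{(2,3)}\rangle,
\]
and $a_2\cdot v_{(2,3)}$ is a known element of $\langle\langle a_2,a_3\rangle\rangle$, namely $\frac{1}{2^4}(5a_2-2a_3-a_{-2}-2a_{-3}+3v_{(2,3)})$ by Table \ref{tab:IPSS10}. Pairing with $a_1$, the axis terms cancel (the coefficients satisfy $5-2-1-2=0$ and every inner product $\langle a_1,a_{\pm 2}\rangle,\langle a_1,a_{\pm3}\rangle$ equals $\frac{1}{2^5}$), leaving $\langle a_1\cdot a_2,v_{(2,3)}\rangle=\frac{3t}{2^4}$, which is precisely the table entry $\langle a_2,a_1\cdot v_{(2,3)}\rangle$.

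Substituting this into your own expansion $\langle v_{(1,2)},v_{(2,3)}\rangle=\frac{4t}{3}+\frac12-\frac{2^6}{3}\langle a_1\cdot a_2,v_{(2,3)}\rangle$ gives $-\frac{8t}{3}+\frac12$, and then
\[
\langle v_{(1,2)},a_1\cdot v_{(2,3)}\rangle=\langle a_1\cdot v_{(1,2)},v_{(2,3)}\rangle
=\tfrac{1}{2^4}\bigl(5t-\tfrac34-t-\tfrac34+3\bigl(-\tfrac{8t}{3}+\tfrac12\bigr)\bigr)=-\tfrac{t}{2^2},
\]
using the $4A$ product $a_1\cdot v_{(1,2)}=\frac{1}{2^4}(5a_1-2a_2-a_{-1}-2a_{-2}+3v_{(1,2)})$ and $\langle a_{-1},v_{(2,3)}\rangle=t$ from your own symmetry argument. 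So the actual content of the proposition is that dihedral data plus associativity \emph{do} pin down all seven entries; your proposal establishes only four of them and asserts the negation of the statement for the remaining three. The conceptual lesson is that the Frobenius identity is genuinely global: a product that is opaque in one dihedral block may become computable after being moved to the other side of the form, where it lands in a different block whose multiplication is known.
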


\begin{proof}
These values follow from the known values of the algebra products and Frobenius form on dihedral Majorana algebras and from the fact that for all $u, v, w \in M$, we must have $\langle u, v \cdot w \rangle = \langle u \cdot v, w \rangle$. 
\end{proof}

\begin{table}%
\begin{center}
\def\arraystretch{1.75}
\begin{tabular}{| >{$} l <{$} >{$} l <{$}  >{$} c <{$} | } \hline
u & v & \langle u,v \rangle \\ \hline
a_1 & v_{(2,3)} & t \\
v_{(1,2)} & v_{(2,3)} & -\frac{8t}{3} + \frac{1}{2} \\
a_1 & a_1 \cdot v_{(2,3)} & t \\
a_{-1} & a_1 \cdot v_{(2,3)} & 0 \\
a_2 & a_1 \cdot v_{(2,3)} & \frac{3t}{2^4} \\
v_{(1,2)} & a_1 \cdot v_{(2,3)} & -\frac{t}{2^2} \\
v_{(2,3)} & a_1 \cdot v_{(2,3)} & t \\ \hline
\end{tabular}
\end{center}
\label{tab:frobeniusform1}
\caption{Some values of the Frobenius form $\langle \, , \, \rangle$ on $M$}
\end{table}

\begin{prop}
\label{prop:products1}
\begin{align*}
v_{(1,2)} \cdot v_{(1,3)} = &-\frac{8t}{3}a_1 + \frac{1}{2^2}(v_{(1,2)} + v_{(1,3)} - v_{(2,3)}) + \frac{8}{3}a_1 \cdot v_{(2,3)}  \\
&- \frac{2}{3} ((a_2 + a_{-2}) \cdot v_{(1,3)} + (a_3 + a_{-3}) \cdot v_{(1,2)})  \\
a_1 \cdot (a_2 \cdot v_{(1,3)}) =& \frac{t}{2^3}a_1 + \frac{1}{2^4}a_1 \cdot v_{(2,3)} + \frac{1}{2^6}(5a_2 + 3a_{-2}) \cdot v_{(1,3)} - \frac{1}{2^4}(a_3 + a_{-3}) \cdot v_{(1,2)} \\ 
\end{align*}
\end{prop}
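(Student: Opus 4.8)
The plan is to compute both products inside the two type-$4A$ dihedral subalgebras $\langle \langle a_1,a_2 \rangle \rangle$ and $\langle \langle a_1,a_3 \rangle \rangle$, which share the common axis $a_1$, and to glue the results using the Monster fusion rule together with the Frobenius form. First I would fix the identification of each subalgebra with the $4A$ row of Table~\ref{tab:IPSS10}: under property $\mathcal{P}$ the four axes of $\langle \langle a_1,a_2 \rangle \rangle$ are $\{a_1,a_{-1},a_2,a_{-2}\}$ with $a_0 = a_1$, $a_1^{\mathrm{tab}}=a_2$, $a_2^{\mathrm{tab}}=a_{-1}$, $a_{-1}^{\mathrm{tab}}=a_{-2}$ (and analogously for $\langle \langle a_1,a_3 \rangle \rangle$). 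Since each $4A$-axis $v_{(i,j)}$ is fixed by the whole group $\langle \tau(a_i),\tau(a_j) \rangle$, both $v_{(1,2)}$ and $v_{(1,3)}$ are $\tau(a_1)$-invariant and hence lie in $V^{(a_1)}_{\{1,0,\frac14\}}$; I would read off their explicit $a_1$-eigenvector decompositions from Table~\ref{tab:dihedralevecs}.

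For the first identity I would expand $v_{(1,2)} \cdot v_{(1,3)}$ using these eigendecompositions. By the fusion rule every cross term lies in $V^{(a_1)}_{\lambda * \mu}$ with $\lambda,\mu \in \{1,0,\frac14\}$; the only non-singleton product is $\frac14 * \frac14 = \{1,0\}$, and crucially no $\frac{1}{32}$-part ever appears. Expanding each eigenvector product by bilinearity reduces everything to three kinds of terms: products internal to a single dihedral algebra (read from Table~\ref{tab:IPSS10}), the spanning vectors $a_i \cdot v_{(j,k)}$ of $\bar{B}$, and the product $v_{(1,2)} \cdot v_{(1,3)}$ itself, which reappears and yields a linear equation to be solved. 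The one genuinely undetermined scalar is the $V_1^{(a_1)}$-projection coming from the $\frac14 * \frac14$ term; by Lemma~\ref{lem:projection} it equals $\langle a_1, v_{(1,2)} \cdot v_{(1,3)} \rangle\, a_1 = \langle a_1 \cdot v_{(1,2)}, v_{(1,3)} \rangle\, a_1$, which I would evaluate from the known value of $a_1 \cdot v_{(1,2)}$ and the form entries of Proposition~\ref{prop:frobeniusform1}; this is exactly where the parameter $t$ enters. Throughout I would exploit the $\sigma$- and $G$-symmetries to collapse all the needed form values to the two families $\langle a_i, v_{(j,k)} \rangle = t$ (whenever $a_i \notin \langle \langle a_j,a_k \rangle \rangle$) and $\langle v_{(i,j)}, v_{(k,l)} \rangle = -\frac{8t}{3}+\frac12$, keeping the number of independent evaluations small. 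Re-expressing the assembled eigenvectors back in terms of $\bar{B}$ then produces the stated expression.

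For the second identity I would set $w := a_2 \cdot v_{(1,3)} \in \bar{B}$ and compute $a_1 \cdot w$ by decomposing $w$ into $a_1$-eigencomponents. Applying the fusion rule to $w = a_2 \cdot v_{(1,3)}$, using the $a_1$-decompositions of $a_2$ (from $\langle \langle a_1,a_2 \rangle \rangle$) and of $v_{(1,3)}$ (from $\langle \langle a_1,a_3 \rangle \rangle$), writes $w = \sum_\lambda w_\lambda$, after which $a_1 \cdot w = \sum_\lambda \lambda\, w_\lambda$ collapses to the claimed combination. The same $\frac14 * \frac14$ splitting is handled by the form as above, and the same reductions to $\bar{B}$ apply. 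The two computations are coupled: the second expansion references $v_{(1,2)} \cdot v_{(1,3)}$ and $a_2 \cdot v_{(1,3)}$, while the first references the vectors $a_i \cdot v_{(j,k)}$, so I would organise them as a single small linear system over $\mathbb{R}[t]$ and solve it once.

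The main obstacle is the bookkeeping involved in disentangling the $\frac14 * \frac14 = \{1,0\}$ contributions: this product splits into a $V_1$ and a $V_0$ part, and while the $V_1$ part is pinned down by the form, the $V_0$ part must be extracted from the explicit eigenvector product, in which the unknowns $v_{(1,2)} \cdot v_{(1,3)}$ and $a_{\pm 2} \cdot v_{(1,3)},\, a_{\pm 3} \cdot v_{(1,2)}$ keep recurring. Managing this circularity cleanly, and checking that the resulting system is consistent and closes inside the span of $\bar{B}$, is the delicate step; the $\sigma$- and $G$-symmetries are the principal tool both for cutting down the work and for cross-checking the final identities, for instance by verifying that both sides pair identically with $a_1$ under the Frobenius form.
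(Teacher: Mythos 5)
Your proposal is correct and takes essentially the same route as the paper: both rest on the $a_1$-eigenvector decompositions supplied by the two type-$4A$ dihedral subalgebras $\langle \langle a_1,a_2 \rangle \rangle$ and $\langle \langle a_1,a_3 \rangle \rangle$, the fusion rule (no $\frac{1}{32}$-components arise, and $\frac{1}{4} * \frac{1}{4} = \{1,0\}$ with the $V_1^{(a_1)}$-part pinned down by the Frobenius form via Lemma \ref{lem:projection}, which is where $t$ enters), and the resolution of the resulting linear relations in which the unknown products $v_{(1,2)} \cdot v_{(1,3)}$ and $a_{\pm i} \cdot v_{(j,k)}$ recur. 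The only difference is organizational: the paper sequences the computation through specially chosen combinations such as $(\alpha_0 - \beta_0) \cdot (\alpha_1 - \beta_1)$ and $(\alpha_0 - \beta_0) \cdot \alpha_1$, in which the $4A$-axes cancel so each product is directly computable before $\mathrm{ad}_{a_1}$ is applied (plus the $\frac{1}{32}$-grading of $(a_2 - a_{-2}) \cdot v_{(1,3)}$ to isolate $a_2 \cdot v_{(1,3)}$), whereas you solve the same constraints as one simultaneous linear system over $\mathbb{R}[t]$.
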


\begin{proof}
From the known values of dihedral algebras, the following are eigenvectors of $a_1$ 
\begin{align*}
\alpha_0 &:= -\frac{1}{2}a_1 + 2a_2 + 2a_{-2} + v_{(1,2)}  \in M_0^{(a_1)} \\
\beta_0 &:= -\frac{1}{3}(a_1 + a_{-1} + 2a_2 + 2a_{-2}) + v_{(1,2)} \in M_{\frac{1}{2^2}}^{(a_1)} \\
\alpha_1 &:= -\frac{1}{2}a_1 + 2a_3 + 2a_{-3} + v_{(1,3)} \in M_0^{(a_1)} \\
\beta_1 &:= -\frac{1}{3}(a_1 + a_{-1} + 2a_{-2} + 2a_{-3}) + v_{(1,3)} \in M_{\frac{1}{2^2}}^{(a_1)}.
\end{align*}

Then 
\[
(\alpha_0 - \beta_0) \cdot (\alpha_1 - \beta_1) = \frac{1}{2^2} a_{-1} + \frac{11}{2^2 \cdot 3}(a_2 + a_{-2} + a_3) - \frac{1}{2^3 \cdot 3}(v_{(1,2)} + v_{(1,3)}) - \frac{4}{3} v_{(2,3)} 
\]
and so the value of $a_1 \cdot ( (\alpha_0 - \beta_0) \cdot (\alpha_1 - \beta_1) )$ can also be computed. Moreover, from the fusion rule,
\[
a_1 \cdot ( (\alpha_0 - \beta_0) \cdot (\alpha_1 - \beta_1) ) = - \frac{1}{2^2}( \alpha_0 \cdot \beta_1 + \alpha_1 \cdot \beta_0) + \frac{1}{2^2} \langle \beta_0, \beta_1 \rangle a_1. 
\]

We calculate that $\langle \beta_0, \beta_1 \rangle = -\frac{16}{3}t + \frac{1}{3}$ and that 
\begin{align*}
\alpha_0 \cdot \beta_0 + \alpha_1 \cdot \beta_1 = 2 v_{(1,2)} \cdot v_{(1,3)} &- \frac{1}{2^2} \sum_{i = 1}^6 a_i - \frac{1}{2^3}(v_{(1,2)} + v_{(1,3)} - 4v_{(2,3)}) \\
&- \frac{4}{3}((a_2 + a_{-2}) \cdot v_{(1,3)} + (a_3 + a_{-3}) \cdot v_{(1,2)}). 
\end{align*}
Using these values, we can calculate $v_{(1,2)} \cdot v_{(1,3)}$ as required. 

The fusion rule again implies that
\[
a_1 \cdot ((\alpha_0 - \beta_0) \cdot \alpha_1) = -\frac{1}{2^2} \alpha_1 \cdot \beta_0
\]
As we now know the value of $v_{(1,2)} \cdot v_{(1,3)}$, the product $\alpha_1 \cdot \beta_0$ can be directly computed. 

Moreover,
\[
(\alpha_0 - \beta_0) \cdot \alpha_1 = -\frac{1}{2^3}(a_1 - a_{-1}) + \frac{7}{2^2 \cdot 3}(a_2 + a_{-2}) + \frac{2}{3}(a_3 + a_{-3}) + \frac{1}{2^3}v_{(1,2)} - v_{(2,3)} + \frac{8}{3}(a_2 + a_{-2}) \cdot v_{(1,3)}
\]
and so 
\begin{align*}
a_1 \cdot ((\alpha_0 - \beta_0) \cdot \alpha_1) = \frac{8}{3}a_1 \cdot ((a_2 &+ a_{-2}) \cdot v_{(1,3)}) + \frac{1}{2^5}(a_1 + a_{-1}) + \frac{1}{2^4 \cdot 3}(a_2 + a_{-2})\\
& + \frac{1}{2^3 \cdot 3}(a_3 + a_{-3}) - \frac{1}{2^5}(v_{(1,2)} + 2v_{(1,3)}) + a_1 \cdot v_{(2,3)}. 
\end{align*}
Using these values, we can calculate $a_1 \cdot ((a_2 + a_{-2}) \cdot v_{(1,3)})$. Finally, we know that $(a_2 - a_{-2}) \cdot v_{(1,3)} \in M_{\frac{1}{2^5}}^{(a_1)}$ and so 
\[
a_1 \cdot ((a_2 - a_{-2}) \cdot v_{(1,3)}) = \frac{1}{2^5} (a_2 - a_{-2}) \cdot v_{(1,3)}.
\]
We can then calculate the value of $a_1 \cdot (a_2 \cdot v_{(1,3)})$ as required.
\end{proof}

\begin{prop}
\label{prop:products2}
\begin{align*}
a_1 \cdot (a_1 \cdot v_{(2,3)}) =& \frac{3t}{2^2}a_1 + \frac{1}{2^2}a_1 \cdot v_{(2,3)} \\
a_{-1} \cdot (a_1 \cdot v_{(2,3)}) =& -\frac{t}{2^2}a_{-1} -\frac{t}{2^3} (a_3 - a_{-3}) + \frac{1}{2^2}a_{-1} \cdot v_{(2,3)} + \frac{1}{2^3}(a_3 + a_{-3}) \cdot v_{(1,3)} \\
v_{(1,2)} \cdot (a_1 \cdot v_{(2,3)}) =& -\frac{t}{2^3 \cdot 3}(7a_1 + a_{-1} + 9a_2 + 7a_{-2} + 2a_3 - 2a_{-3}) + \frac{t}{2^3}v_{(1,2)} \\
&+ \frac{1}{2^4}(7a_1 - 3a_{-1}) \cdot v_{(2,3)} +\frac{1}{2^2 \cdot 3} ((2a_2 + a_{-2}) \cdot v_{(1,3)} + (a_3 - a_{-3}) \cdot v_{(1,2)}).
\end{align*}
\end{prop}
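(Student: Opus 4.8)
The plan is to extend the eigenvector-and-fusion calculation of Proposition~\ref{prop:products1}, bootstrapping on the two products established there. Throughout I would work with the $a_1$-eigenvectors that Table~\ref{tab:dihedralevecs} supplies for the dihedral subalgebras $\langle\langle a_1,a_2\rangle\rangle$, $\langle\langle a_1,a_3\rangle\rangle$ and $\langle\langle a_1,a_{-1}\rangle\rangle$, use the Monster fusion table to control the eigenspace in which each product of eigenvectors lands, and recover every projection onto a one-dimensional $1$-eigenspace from Lemma~\ref{lem:projection} together with the form values of Table~3 (extended where needed via $\langle u, v\cdot w\rangle = \langle u\cdot v, w\rangle$).

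The first identity is the cheapest. Because $G\cong 2^2$ we have $\tau(a_1)=\tau(a_2)\tau(a_3)$, and on $\langle\langle a_2,a_3\rangle\rangle$ the right-hand side is the rotation $\rho=\tau(a_2)\tau(a_3)$, which fixes the $4A$-axis $v_{(2,3)}=v_{\rho(a_2,a_3)}$ by Proposition~\ref{prop:axes}. Hence $v_{(2,3)}$ is $\tau(a_1)$-fixed and so has no $\tfrac{1}{32}$-eigencomponent for $a_1$, while Lemma~\ref{lem:projection} with $\langle a_1, v_{(2,3)}\rangle=t$ pins its $1$-component down as $t a_1$. Writing $a_1\cdot v_{(2,3)} = t a_1 + \tfrac14 c$ with $c\in M_{1/4}^{(a_1)}$ and applying $\mathrm{ad}_{a_1}$ once more then gives the formula by eigenvalue bookkeeping.

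For the other two products I would again realise each unknown as a summand of a product of explicit $a_1$-eigenvectors and evaluate $a_1\cdot(p\cdot q)$ in two ways — directly, and through the fusion rule — as in Proposition~\ref{prop:products1}, now feeding in the computed values of $v_{(1,2)}\cdot v_{(1,3)}$ and $a_1\cdot(a_2\cdot v_{(1,3)})$. The product with $a_{-1}$ simplifies at the outset since $\langle\langle a_1,a_{-1}\rangle\rangle$ is of type $2B$, giving $a_1\cdot a_{-1}=0$; this, together with the symmetry $\tau(a_2)$ (which fixes $v_{(2,3)}$ and hence sends $a_1\cdot v_{(2,3)}$ to $a_{-1}\cdot v_{(2,3)}$), organises the evaluation of $a_{-1}\cdot(a_1\cdot v_{(2,3)})$. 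For $v_{(1,2)}\cdot(a_1\cdot v_{(2,3)})$ I would decompose $v_{(1,2)}$ into its $a_1$-eigencomponents, which lie only in $\{1,0,\tfrac14\}$ by Table~\ref{tab:dihedralevecs}, and multiply componentwise against $a_1\cdot v_{(2,3)} = t a_1 + \tfrac14 c$.

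The main obstacle is this last product. It mixes eigenvectors from three different dihedral subalgebras whose $4A$-axes $v_{(1,2)},v_{(1,3)},v_{(2,3)}$ interact only through the formulas of Proposition~\ref{prop:products1}, and a naive componentwise expansion reintroduces the very product being sought through the vector $c$. The role of the two-way evaluation is exactly to break this circularity: the fusion grading forces each partial product into a definite eigenspace, after which it is fixed by its projections, and the resulting linear relation is solved for the target. Completing this is a long but routine tracking of coefficients — in both the $\bar{B}$-vectors and the parameter $t$ — which I would organise using the $C_2\times C_2$-symmetry among the indices $\{1,2,3\}$ to avoid redundant work.
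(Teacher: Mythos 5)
Your proposal is sound, and for the first identity it takes a genuinely different route from the paper. The paper obtains $a_1 \cdot (a_1 \cdot v_{(2,3)})$ by constructing the eigenvectors $\alpha_{-1} = -\frac{3t}{4}a_1 - \frac{1}{4}v_{(2,3)} + a_1\cdot v_{(2,3)} \in M_0^{(a_1)}$ and $\beta_{-1} = -ta_1 + a_1\cdot v_{(2,3)} \in M_{1/4}^{(a_1)}$ as fusion-rule combinations of products of $\alpha_0,\alpha_1,\beta_0,\beta_1$, which requires feeding in the value of $v_{(1,2)}\cdot v_{(1,3)}$ from Proposition \ref{prop:products1}; the identity then drops out of $a_1\cdot\alpha_{-1}=0$. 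You instead get the decomposition $v_{(2,3)} = ta_1 + v_0 + v_{1/4}$ directly: $\tau(a_1)$-invariance of $v_{(2,3)}$ kills the $\frac{1}{32}$-part, Lemma \ref{lem:projection} pins the $1$-part to $ta_1$, and applying $\mathrm{ad}_{a_1}$ twice gives the formula. This is shorter and independent of Proposition \ref{prop:products1}, and note that your $v_0$ and $c=v_{1/4}$ are exactly $-4\alpha_{-1}$ and $4\beta_{-1}$, so you recover for free the eigenvectors the paper needs later. One small point of care: $G\cong 2^2$ alone does not tell you which product of generators equals $\tau(a_1)$; you need that $\tau(a_2)\tau(a_3)$ and $\tau(a_1)$ induce the same permutation of $A$ (immediate from property $\mathcal{P}$) and that algebra automorphisms agreeing on a generating set coincide. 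For the second and third identities your plan coincides with the paper's method — two-way evaluation of $a_1\cdot(p\cdot q)$ via the fusion grading, solved for the unknown — and you correctly identify both the circularity problem and its resolution; what you leave unspecified is the actual choice of combinations (the paper uses $(\alpha_{-1}-\beta_{-1})\cdot(\alpha_0+\beta_0+\frac{5}{3}a_{-1})$ and $(\alpha_{-1}-\beta_{-1})\cdot\alpha_0$ respectively), which is where the real computational work sits. Your remark about the $\tau(a_2)$-symmetry is also exactly the mechanism the paper uses implicitly: the third identity needs $a_1\cdot(a_{-1}\cdot v_{(2,3)})$, which is the $\tau(a_2)$-image of the second identity, so the order of the three computations matters.
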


\begin{proof}
If we take $\alpha_0, \alpha_1, \beta_0, \beta_1$ as in Proposition \ref{prop:products1} then, from the fusion rule, 
\begin{align*}
\alpha_{-1} &:= \frac{3}{2^4}(\alpha_0 \cdot \alpha_1 + \beta_0 \cdot \beta_1 - \frac{1}{2^2}(\beta_0, \beta_0)a_1 + \frac{1}{3}(\alpha_0 + \alpha_1)) \in M_0^{(a_1)} \\
\beta_{-1} &:= \frac{3}{2^4}(\alpha_0 \cdot \beta_1 + \alpha_1 \cdot \beta_0 - \frac{3}{2^3}(\beta_0 + \beta_0)) \in M_{\frac{1}{2^2}}^{(a_1)}.
\end{align*}
Using the value of $v_{(1,2)} \cdot v_{(1,3)}$ calculated in Proposition \ref{prop:products1}, we can calculate that
\begin{align*}
\alpha_{-1} &= -\frac{3t}{2^2}a_1 - \frac{1}{2^2} v_{(2,3)} + a_1 \cdot v_{(2,3)} \\
\beta_{-1} &= -ta_1 + a_1 \cdot v_{(2,3)}.
\end{align*}
The equality $a_1 \cdot \alpha_{-1} = 0$ then gives the value of $a_1 \cdot (a_1 \cdot v_{(2,3)})$ as required.

We further note that $\alpha_2 := a_{-1} \in M_0^{(a_1)}$ and so, from the fusion rule
\[
a_1 \cdot ((\alpha_{-1} - \beta_{-1}) \cdot (\alpha_0 + \beta_0 + \frac{5}{3} \alpha_2)) = \frac{1}{2^2}(\alpha_{-1} \cdot \beta_0 - (\alpha_0 + \frac{5}{3}\alpha_2) \cdot \beta_{-1}) + \frac{1}{2^2} \langle \beta_0, \beta_{-1} \rangle .
\]
We calculate that $\langle \beta_0, \beta_{-1} \rangle = -\frac{5t}{2 \cdot 3}$. Moreover, 
\begin{align*}
(\alpha_{-1} - \beta_{-1}) \cdot (\alpha_0 + \beta_0 + \frac{5}{3} \alpha_2) =  &-\frac{t}{2^4 \cdot 3}(a_1 + a_{-1}) + \frac{31t + 2}{2^3 \cdot 3} a_2 - \frac{t + 2}{2^2 \cdot 3} a_{-2} + \frac{1}{2^2 \cdot 3}(a_3 + a_{-3})  \\
&+ \frac{t - 2}{2^4}v_{(1,2)} + \frac{1}{2^3} v_{(1,3)} - \frac{1}{2^2}v_{(2,3)} + \frac{13}{2^3 \cdot 3} a_1 \cdot v_{(2,3)} \\
&+ \frac{4}{3} a_2 \cdot v_{(1,3)} + \frac{1}{3} (a_3 + a_{-3}) \cdot v_{(1,2)} .
\end{align*}

Then using the value of the algebra product $a_1 \cdot (a_1 \cdot v_{(2,3)})$, as well as those calculated in Proposition \ref{prop:products1}, we can also explicitly calculate the value of $a_1 \cdot ((\alpha_{-1} - \beta_{-1}) \cdot (\alpha_0 + \beta_0 + \frac{5}{3} \alpha_2))$. Moreover,
\begin{align*}
\alpha_{-1} \cdot \beta_0 - (\alpha_0 + \frac{5}{3}\alpha_2) \cdot \beta_{-1} = &-2a_{-1} \cdot  (a_1 \cdot v_{(2,3)}) + \frac{t}{2^4}(3a_1 + a_{-1}) + \frac{11t+ 1}{2^3 \cdot 3} a_2 - \frac{5t - 1}{2^3 \cdot 3} a_{-2}  \\
& - \frac{1}{2^3 \cdot 3}(a_3 + a_{-3}) - \frac{3t + 1}{2^4} v_{(1,2)} + \frac{1}{2^4} v_{(2,3)} - \frac{1}{2^3} a_1 \cdot v_{(2,3)}\\
& + \frac{1}{2} (a_2 + a_{-2}) \cdot v_{(1,3)} + \frac{1}{2 \cdot 3}(5a_3 + a_{-3}) \cdot v_{(1,2)}. 
\end{align*} 
Thus we can calculate the value of $a_{-1} \cdot  (a_1 \cdot v_{(2,3)})$ as required.

We now calculate that
\begin{align*}
(\alpha_{-1} - \beta_{-1}) \cdot a_0 = &\left(\frac{2t}{3} - \frac{1}{2^3}\right) a_2 - \frac{1}{2^3}(a_{-2} - a_3 - a_{-3}) - \frac{1}{2^4}(v_{(1,2)} - v_{(1,3)} + 4v_{(2,3)})\\
& + \frac{1}{2^3 \cdot 3}(7a_1 + 4a_{-1}) \cdot v_{(2,3)} - \frac{2}{3}a_2 \cdot v_{(1,3)} + \frac{1}{2 \cdot 3}(a_3 + a_{-3}) \cdot v_{(1,2)}.
\end{align*}
In particular, the value of $a_1 \cdot  (a_{-1} \cdot v_{(2,3)})$ is now known and so the value $a_1 \cdot ((\alpha_{-1} - \beta_{-1}) \cdot a_0)$ can be explicitly calculated. 

From the fusion rule, 
\[
a_1 \cdot ((\alpha_{-1} - \beta_{-1}) \cdot a_0) = -\frac{1}{2^2} \beta_{-1} \cdot \alpha_0
\]
and 
\begin{align*}
\beta_{-1} \cdot \alpha_0 = v_{(1,2)} \cdot (a_1 \cdot v_{(2,3)}) &-\frac{3t}{2^3} a_1 + \frac{t}{2^2}(a_2 + a_{-2}) + \frac{3}{2^4}(a_1 + a_{-1}) \cdot v_{(2,3)} \\ 
&- \frac{1}{2^3}(a_2 + a_{-2}) \cdot v_{(1,3)} + \frac{1}{2^2}(a_3 + a_{-3}) \cdot v_{(1,2)}.
\end{align*}
We can then use these values to calculate $v_{(1,2)} \cdot (a_1 \cdot v_{(2,3)})$ as required.
\end{proof}

\begin{prop}
\label{prop:nullspace}
The elements of $\bar{B}$ satisfy the following linear dependencies:
\begin{align*}
(a_1 - a_{-1}) \cdot v_{(2,3)} - t(a_1 - a_{-1}) = 0 \\
(a_2 - a_{-2}) \cdot v_{(1,3)} - t(a_2 - a_{-2}) = 0 \\
(a_3 - a_{-3}) \cdot v_{(1,2)} - t(a_3 - a_{-3}) = 0.
\end{align*}
\end{prop}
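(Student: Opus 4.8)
The plan is to prove the first dependency $(a_1 - a_{-1})\cdot v_{(2,3)} = t(a_1 - a_{-1})$ and obtain the other two for free. The automorphism $\psi_{\sigma}$ attached to $\sigma = (a_1,a_2,a_3)(a_{-1},a_{-2},a_{-3})$ sends $a_1 - a_{-1} \mapsto a_2 - a_{-2}$ and $v_{(2,3)} \mapsto v_{(1,3)}$, and it fixes the parameter $t$ since $\langle a_i, v_{(j,k)}\rangle = t$ for every $\{i,j,k\} = \{1,2,3\}$; so applying $\psi_{\sigma}$ and $\psi_{\sigma}^2$ to the first identity yields exactly the second and third. Hence I focus on the first.

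First I would record two structural facts. Since $v_{(2,3)}$ is the $4A$-axis of $\langle\langle a_2, a_3\rangle\rangle$ and, by Proposition~\ref{prop:axes}, the $4A$-axis is invariant under the dihedral group $\langle\tau(a_2),\tau(a_3)\rangle$, it is fixed by $\tau(a_2)$; as $\tau(a_2)$ interchanges $a_1$ and $a_{-1}$, the vector $a_1 - a_{-1}$ is negated by $\tau(a_2)$, i.e. $a_1 - a_{-1} \in M_{1/32}^{(a_2)}$ (and symmetrically $\in M_{1/32}^{(a_3)}$). Next, from the identity $a_1\cdot(a_1\cdot v_{(2,3)}) = \tfrac{3t}{2^2}a_1 + \tfrac{1}{2^2}a_1\cdot v_{(2,3)}$ of Proposition~\ref{prop:products2}, I would decompose $y := a_1\cdot v_{(2,3)}$ into $\mathrm{ad}_{a_1}$-eigencomponents $y = p\,a_1 + y_0 + y_{1/4} + y_{1/32}$: comparing eigencomponents forces $y_0 = y_{1/32} = 0$ and $p = t$, the latter also following from Lemma~\ref{lem:projection} together with the value $\langle a_1, a_1\cdot v_{(2,3)}\rangle = t$ of Proposition~\ref{prop:frobeniusform1}. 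Thus $a_1\cdot v_{(2,3)} = t\,a_1 + \beta$ with $\beta \in M_{1/4}^{(a_1)}$.

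Applying $\tau(a_2)$ to this equation gives $a_{-1}\cdot v_{(2,3)} = t\,a_{-1} + \beta^{\tau(a_2)}$, so that
\[
(a_1 - a_{-1})\cdot v_{(2,3)} = t(a_1 - a_{-1}) + \bigl(\beta - \beta^{\tau(a_2)}\bigr),
\]
and the proposition reduces to the single claim $\beta = \beta^{\tau(a_2)}$. Because $\tau(a_2)$ conjugates $\mathrm{ad}_{a_1}$ to $\mathrm{ad}_{a_{-1}}$ we have $\beta^{\tau(a_2)} \in M_{1/4}^{(a_{-1})}$, so it suffices to show that $\beta$ is also a $\tfrac14$-eigenvector of $a_{-1}$, i.e. $a_{-1}\cdot\beta = \tfrac14\beta$. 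I would verify this from $a_{-1}\cdot\beta = a_{-1}\cdot(a_1\cdot v_{(2,3)}) - t\,(a_{-1}\cdot a_1)$, where $a_{-1}\cdot a_1 = 0$ as $\langle\langle a_1, a_{-1}\rangle\rangle$ is of type $2B$, inserting the explicit value of the triple product $a_{-1}\cdot(a_1\cdot v_{(2,3)})$ from Proposition~\ref{prop:products2} and matching it against $\tfrac14\beta = \tfrac14(a_1\cdot v_{(2,3)} - t\,a_1)$.

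This last verification is the main obstacle. The eigenspace membership and the symmetry under $\tau(a_2),\tau(a_3)$ only locate $(a_1 - a_{-1})\cdot v_{(2,3)}$ inside $M_{1/32}^{(a_2)}\cap M_{1/32}^{(a_3)}$ (using that $v_{(2,3)}\in M_{\{1,0,1/4\}}^{(a_2)}$ and the fusion rule $\tfrac{1}{32}*\{1,0,\tfrac14\} = \tfrac{1}{32}$), a space that a priori also contains $a_1 - a_{-1}$; the collapse to a genuine scalar multiple is not forced by symmetry and requires the full strength of the triple-product computations of Propositions~\ref{prop:products1} and~\ref{prop:products2}. As a consistency check on the scalar, pairing with $a_1$ via the Frobenius form gives $\langle a_1, (a_1 - a_{-1})\cdot v_{(2,3)}\rangle = \langle a_1\cdot v_{(2,3)}, a_1 - a_{-1}\rangle = t$ and $\langle a_1, t(a_1 - a_{-1})\rangle = t$ (using $\langle a_1, a_{-1}\rangle = 0$), confirming the coefficient $t$.
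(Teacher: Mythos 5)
Your reductions at the start are sound and in fact reproduce facts the paper establishes en route: the identity $a_1\cdot(a_1\cdot v_{(2,3)}) = \frac{3t}{2^2}a_1 + \frac{1}{2^2}a_1\cdot v_{(2,3)}$ does force $a_1\cdot v_{(2,3)} = t\,a_1 + \beta$ with $\beta \in M_{1/4}^{(a_1)}$ (this is the paper's eigenvector $\beta_{-1}$), $v_{(2,3)}$ is indeed $\tau(a_2)$-fixed, and the first dependency is equivalent to $\beta = \beta^{\tau(a_2)}$. The gap is in how you close this, and it is twofold. First, the claim that it ``suffices to show $a_{-1}\cdot\beta = \frac14\beta$'' is a non sequitur: knowing that $\beta$ and $\beta^{\tau(a_2)}$ both lie in $M_{1/4}^{(a_1)}\cap M_{1/4}^{(a_{-1})}$ does not make them equal. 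Their difference could a priori be any $\tau(a_2)$-odd vector of that intersection, and excluding such vectors requires knowing the eigenspace structure of $M$ --- which is only computable after the dependencies of Proposition \ref{prop:nullspace} are in hand. Second, the proposed verification of $a_{-1}\cdot\beta = \frac14\beta$ is circular. Proposition \ref{prop:products2} expresses $a_{-1}\cdot(a_1\cdot v_{(2,3)})$ as a combination of elements of the spanning set $\bar{B}$, in which $a_{-1}\cdot v_{(2,3)}$ and the products of $a_{\pm 3}$ with $v_{(1,2)}$ occur as spanning vectors \emph{not yet known} to depend linearly on the rest, whereas $\frac14\beta$ is supported only on $a_1$ and $a_1\cdot v_{(2,3)}$. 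Equality of the two sides is therefore not a formal consequence of Proposition \ref{prop:products2}; rewritten over $\bar{B}$ it contains the term $-\frac14\bigl[(a_1-a_{-1})\cdot v_{(2,3)} - t(a_1-a_{-1})\bigr]$ together with terms of the same kind in the index $3$, i.e.\ it \emph{is} (a combination of) the dependencies being proved. ``Matching'' the two expressions presupposes the conclusion.

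What your plan lacks is the mechanism by which the paper actually generates the relation: an application of the fusion rule to a \emph{new} product of eigenvectors. The paper multiplies the two $0$-eigenvectors $\alpha_{-1} = -\frac{3t}{2^2}a_1 - \frac{1}{2^2}v_{(2,3)} + a_1\cdot v_{(2,3)}$ and $\alpha_2 = a_{-1}$ of $a_1$; by the rule $0 \ast 0 = \{0\}$ the product $\alpha_{-1}\cdot\alpha_2$ must again be annihilated by $a_1$, yet direct computation using Propositions \ref{prop:products1} and \ref{prop:products2} gives
\begin{equation*}
a_1\cdot(\alpha_{-1}\cdot\alpha_2) = \frac{1}{2^8}\bigl[(a_3-a_{-3})\cdot v_{(1,2)} - t(a_3-a_{-3})\bigr],
\end{equation*}
and forcing this to vanish is what produces the dependency (for the pair indexed by $3$, whence the other two by the symmetry you also invoke). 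Substitution of already-known product values, which is all your final step does, can never yield new linear relations among elements of $\bar{B}$; some constraint of this fusion-rule type must be imposed, and your proposal never performs such a step --- which is exactly the ``main obstacle'' you flag but do not overcome.
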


\begin{proof}
Recall that 
\begin{align*}
\alpha_{-1} &= -\frac{3t}{2^2}a_1 - \frac{1}{2^2} v_{(2,3)} + a_1 \cdot v_{(2,3)} \in M_0^{(a_1)} \\
\alpha_2 &= a_{-1} \in M_0^{(a_1)}.
\end{align*}
Using the value of $a_{-1} \cdot (a_1 \cdot v_{(2,3)})$ from Proposition \ref{prop:products2} and the fusion rule, we have
\[
\alpha_{-1} \cdot \alpha_2 = -\frac{t}{2^3}(2a_{-1} + a_3 + a_{-3}) + \frac{1}{2^3}(a_3 - a_{-3}) \cdot v_{(1,2)} \in M_0^{(a_1)}. 
\]
However, 
\[
a_1 \cdot (\alpha_{-1} \cdot \alpha_2) = -\frac{t}{2^8}(a_3 - a_{-3}) + \frac{1}{2^8}(a_3 - a_{-3}) \cdot v_{(1,2)}
\]
and so we must have $(a_3 - a_{-3}) \cdot v_{(1,2)} - t(a_3 - a_{-3}) = 0$. The symmetry between the three pairs of axes $\{a_1, a_{-1}\}$, $\{a_2, a_{-2}\}$ and $\{a_3, a_{-3}\}$ gives the remaining two relations.
\end{proof}

\begin{prop}
\label{prop:products3}
\begin{align*}
v_{(1,2)} \cdot (a_3 \cdot v_{(1,2)}) = \frac{(t - 1)t}{2^2}(a_3 - a_{-3}) + \frac{t}{2^2}v_{(1,2)} + \frac{1}{2} a_3 \cdot v_{(1,2)}
\end{align*}
\end{prop}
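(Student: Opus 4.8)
The plan is to adapt the eigenvector-and-fusion-rule method of Propositions \ref{prop:products1} and \ref{prop:products2}. First I would exploit the order-$3$ automorphism $\psi_\sigma$: because $\sigma$ cycles the generators and the induced automorphism $\phi_\sigma$ of $R_{4A}=\mathbb{R}[t]$ is the identity (the only $\mathbb{R}$-algebra automorphism of $\mathbb{R}[t]$ of order dividing $3$ is the identity, so $\sigma$ fixes $t$), it is enough to prove the identity obtained from the statement by shifting every index by one, namely the formula for $v_{(2,3)}\cdot(a_1\cdot v_{(2,3)})$, and then apply $\psi_\sigma^{-1}$. Relative to $a_1$ I would reuse the eigenvectors $\alpha_{-1}\in M_0^{(a_1)}$ and $\beta_{-1}\in M_{\frac14}^{(a_1)}$ from Proposition \ref{prop:products2}, which yield $v_{(2,3)} = t a_1 - 2^2\alpha_{-1} + 2^2\beta_{-1}$ (so $v_{(2,3)}$ has no $\tfrac1{32}$-part against $a_1$) and $a_1\cdot v_{(2,3)} = t a_1 + \beta_{-1}$.

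Setting $q:=-2^2\alpha_{-1}\in M_0^{(a_1)}$ and $r:=2^2\beta_{-1}\in M_{\frac14}^{(a_1)}$, so that $a_1\cdot v_{(2,3)} = t a_1 + \tfrac14 r$, expanding the product and applying the Monster fusion rule collapses it to
\[
v_{(2,3)}\cdot(a_1\cdot v_{(2,3)}) = t^2 a_1 + \tfrac{5t}{2^4}\,r + \tfrac14\,q\cdot r + \tfrac14\,r\cdot r,
\]
with $q\cdot r\in M_{\frac14}^{(a_1)}$ and $r\cdot r\in M_1^{(a_1)}\oplus M_0^{(a_1)}$. The $M_1$-part of $r\cdot r$ is immediate: by primitivity and Lemma \ref{lem:projection} it equals $\langle a_1, r\cdot r\rangle a_1 = \tfrac14\langle r,r\rangle a_1$, and $\langle r,r\rangle = 4t(1-t)$ drops out of the Frobenius identity $\langle a_1\cdot v_{(2,3)}, a_1\cdot v_{(2,3)}\rangle = \langle v_{(2,3)}, a_1\cdot(a_1\cdot v_{(2,3)})\rangle = \tfrac{3t^2+t}{4}$ computed from Proposition \ref{prop:products2} and the values in Proposition \ref{prop:frobeniusform1}. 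The linear dependencies of Proposition \ref{prop:nullspace} supply the $\tau(a_1)$-odd part of the answer at once: since $\tau(a_1)$ fixes $v_{(2,3)}$ and interchanges $a_1$ and $a_{-1}$, one has $v_{(2,3)}\cdot\bigl((a_1-a_{-1})\cdot v_{(2,3)}\bigr) = t\cdot t\,(a_1-a_{-1})$, which fixes the coefficient of $a_1-a_{-1}$ once the even part is known.

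The main obstacle is the $M_0^{(a_1)}$-component of $r\cdot r = 2^4\,\beta_{-1}\cdot\beta_{-1}$; unlike its $M_1$-part this is invisible to the Frobenius form, since the $0$-eigenspace is multidimensional. To obtain it I would use $\beta_{-1}\cdot\beta_{-1} = (a_1\cdot v_{(2,3)})^2 - 2t\,a_1\cdot(a_1\cdot v_{(2,3)}) + t^2 a_1$ together with $a_1\cdot(a_1\cdot v_{(2,3)}) = \tfrac{3t}{2^2}a_1 + \tfrac14(a_1\cdot v_{(2,3)})$ from Proposition \ref{prop:products2}, so that everything reduces to the single new product $(a_1\cdot v_{(2,3)})^2$. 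This square I would compute by one further run of the same bootstrap, building suitable $a_1$-eigenvectors and reading off the $M_0$-part via the products already recorded in Propositions \ref{prop:products1} and \ref{prop:products2}. Assembling the three eigen-parts and comparing with the ansatz $A(a_1-a_{-1})+B\,v_{(2,3)}+C\,a_1\cdot v_{(2,3)}$ determines $A,B,C$; as an independent check these are also pinned down by the pairings $\langle a_1,\cdot\rangle$, $\langle a_{-1},\cdot\rangle$ and $\langle v_{(2,3)},\cdot\rangle$, using $\langle v_{(2,3)},v_{(2,3)}\rangle = 2$ from Table \ref{tab:IPSS10} and Proposition \ref{prop:nullspace} to evaluate $\langle a_{-1}\cdot v_{(2,3)}, a_1\cdot v_{(2,3)}\rangle$. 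Applying $\psi_\sigma^{-1}$ returns the stated formula.
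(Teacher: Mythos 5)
Your $\psi_\sigma$-reduction is legitimate (including the argument that $\phi_\sigma$ must fix $t$), and the eigenspace bookkeeping that follows is sound as far as it goes: with $v_{(2,3)} = ta_1 + q + r$ one does get the stated expansion, $\langle r,r\rangle = 4t(1-t)$, the $M_1^{(a_1)}$-part of $r\cdot r$ by primitivity, and the odd part from Proposition \ref{prop:nullspace} (though it is $\tau(a_2)$, not $\tau(a_1)$, that interchanges $a_1$ and $a_{-1}$; $\tau(a_1)$ fixes both). The genuine gap sits exactly at what you call the main obstacle. You reduce the unknown $M_0^{(a_1)}$-component of $r\cdot r$ to the single product $(a_1\cdot v_{(2,3)})^2$ and propose to get that ``by one further run of the same bootstrap'' from Propositions \ref{prop:products1} and \ref{prop:products2}. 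This is circular: $(a_1\cdot v_{(2,3)})^2$ is Proposition \ref{prop:products4}, whose proof in the paper \emph{uses} the product you are trying to compute --- since $\alpha_{-1}-\beta_{-1} = \frac{t}{2^2}a_1 - \frac{1}{2^2}v_{(2,3)}$, both $(\alpha_{-1}-\beta_{-1})\cdot\alpha_{-1}$ and $\beta_{-1}\cdot\alpha_{-1}$ there require $v_{(2,3)}\cdot(a_1\cdot v_{(2,3)})$. More fundamentally, everything extractable from the subalgebra generated by $a_1$ and $v_{(2,3)}$ alone (idempotency of $v_{(2,3)}$, the fusion rule, primitivity, the form) amounts to $q\cdot r = \frac{2-t}{4}r$, the $M_1$-part $t(1-t)a_1$ of $r\cdot r$, and $q\cdot q + (r\cdot r)_0 = q$: one linear relation between the two unknown $M_0$-vectors $q\cdot q$ and $(r\cdot r)_0$, so the system remains underdetermined however the bootstrap is rerun on these two generators. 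Your fallback via pairings suffers from the same defect: the ansatz $X\in\langle a_1,a_{-1},v_{(2,3)},a_1\cdot v_{(2,3)}\rangle$ is precisely what is not known before the product is computed ($v_{(2,3)}$ is not an $(\mathcal{M},*)$-axis, so no Sakuma-type theorem bounds this two-generated subalgebra), and without it the pairings accessible at this stage (none exist against $a_3\cdot v_{(1,2)}$, for example) do not pin $X$ down.

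The missing ingredient is the paper's key manoeuvre: an equation in which the unknown occurs linearly and every other term is already known, obtained by leaving the subalgebra $\langle\langle a_1, v_{(2,3)}\rangle\rangle$. The paper takes $\alpha_0,\beta_0$ from the dihedral algebra $\langle\langle a_1,a_2\rangle\rangle$ together with $\alpha_{-2}\in M_0^{(a_1)}$ (built via Proposition \ref{prop:products1} from $(a_2+a_{-2})\cdot v_{(1,3)}$ and $(a_3+a_{-3})\cdot v_{(1,2)}$), and applies the fusion rule to $a_1\cdot\bigl((\alpha_0-\beta_0)\cdot\alpha_{-2}\bigr) = -\frac{1}{2^2}\beta_0\cdot\alpha_{-2}$: the left side is computable from Propositions \ref{prop:products1} and \ref{prop:products2}, while $\beta_0\cdot\alpha_{-2}$ contains $v_{(1,2)}\cdot((a_3+a_{-3})\cdot v_{(1,2)})$ as its only unknown; Proposition \ref{prop:nullspace} then handles the $(a_3-a_{-3})$-half. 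Some device of this kind is indispensable and your outline does not contain one. (A final remark: if one \emph{grants} the ansatz, your three pairing equations, evaluated with the paper's Tables 3 and 4, Proposition \ref{prop:products2} and Proposition \ref{prop:nullspace}, force the coefficients $\frac{(2t-1)t}{2^2}$, $\frac{t}{2^2}$, $\frac{1}{2}$; the first disagrees with the displayed proposition but agrees with Table 5, so your check is genuinely informative --- it exposes an apparent typo in the stated coefficient of $a_3-a_{-3}$ --- but it cannot substitute for the missing step.)
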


\begin{proof}

We now let 
\begin{align*}
\alpha_{-2} &:= \alpha_0 \cdot \alpha_1 - \frac{1}{2^2}(\alpha_0 + \alpha_1) - \frac{8}{3} \alpha_{-1} \\
&= -\frac{2t}{3} a_1 - \frac{1}{3} v_{(2,3)} + \frac{4}{3}(a_2 + a_{-2}) \cdot v_{(1,3)}+ \frac{4}{3}(a_3 + a_{-3}) \cdot v_{(1,2)} \in M_0^{(a_1)}. 
\end{align*}
We further note that
\[
a_1 \cdot ((\alpha_0 - \beta_0) \cdot \alpha_{-2}) = -\frac{1}{2^2} \beta_0 \cdot \alpha_{-2}.
\]
We can calculate that
\begin{align*}
(\alpha_0 - \beta_0) \cdot \alpha_{-2} = &-\frac{1}{2 \cdot 3}(a_1 - a_{-1}) + \frac{23t - 2}{3^2}(a_2 + a_{-2}) + \frac{2}{3^2}(a_3 + a_{-3}) + \frac{t}{2 \cdot 3}v_{(1,2)} -\frac{1}{3}v_{(2,3)} \\
&-\frac{2^3}{3^2}(a_1 + a_{-1}) \cdot v_{(2,3)} + \frac{20}{3^2}(a_2 + a_{-2}) \cdot v_{(1,3)} + \frac{2^3}{3^2}(a_3 + a_{-3}) \cdot v_{(1,2)}
\end{align*}
and so the value of $a_1 \cdot ((\alpha_0 - \beta_0) \cdot \alpha_{-2})$ can also be calculated.

Moreover, 
\begin{align*}
\beta_0 \cdot \alpha_{-2} = \frac{4}{3}v_{(1,2)} \cdot ((a_3 + a_{-3}) \cdot v_{(1,2)}) &-\frac{7t}{2 \cdot 3} a_1 - \frac{17t}{2 \cdot 3^2} a_{-1} - \frac{2t - 1}{2 \cdot 3^2} a_2 - \frac{18t - 1}{2 \cdot 3^2} a_{-2} \\
&+ \frac{1}{2 \cdot 3^2}(a_3 + a_{-3}) + \frac{2t -1}{2^2 \cdot 3}v_{(1,2)} +\frac{1}{2^2 \cdot 3} v_{(1,3)}  \\
&+ \frac{2}{3^2} (4a_1 + 3a_{-1}) \cdot v_{(2,3)}  - \frac{2}{3^2} (5a_2 + a_{-2}) \cdot v_{(1,3)}. 
\end{align*}
Thus we can explicitly calculate the value of $v_{(1,2)} \cdot ((a_3 + a_{-3}) \cdot v_{(1,2)})$. As $(a_3 - a_{-3}) \cdot v_{(1,2)} - t(a_3 - a_{-3}) = 0$, 
\[
v_{(1,2)} \cdot ((a_3 - a_{-3}) \cdot v_{(1,2)}) = t(a_3 - a_{-3}) \cdot v_{(1,2)}.
\]
We can use this value, along with that of $v_{(1,2)} \cdot ((a_3 + a_{-3}) \cdot v_{(1,2)})$, to calculate the product $v_{(1,2)} \cdot (a_3 \cdot v_{(1,2)})$ as required.

\end{proof}

\begin{prop}
\label{prop:products4}
\begin{align*}
(a_1 \cdot v_{(2,3)}) \cdot (a_1 \cdot v_{(2,3)}) &= \frac{(10t + 1)t}{2^4}a_1 - \frac{(2t - 1)t}{2^4}a_{-1} + \frac{t}{2^4}v_{(2,3)} + \frac{t}{2^2}a_1 \cdot v_{(2,3)} \\
(a_{-1} \cdot v_{(2,3)}) \cdot (a_1 \cdot v_{(2,3)}) &= -\frac{(6t - 1)t}{2^4}a_1 - \frac{(2t - 1)t}{2^4}a_{-1} + \frac{t}{2^4}v_{(2,3)} + \frac{t}{2^2}a_1 \cdot v_{(2,3)}
\end{align*}
\end{prop}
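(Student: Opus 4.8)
The plan is to exploit the fact that, relative to $\mathrm{ad}_{a_1}$, the vector $a_1 \cdot v_{(2,3)}$ is almost an eigenvector. First I would determine its eigenspace decomposition. By Lemma \ref{lem:projection} together with the form values in Proposition \ref{prop:frobeniusform1}, the projection of $a_1 \cdot v_{(2,3)}$ onto $\langle a_1 \rangle$ equals $\langle a_1, a_1 \cdot v_{(2,3)}\rangle a_1 = t a_1$. Substituting this into the identity $a_1 \cdot (a_1 \cdot v_{(2,3)}) = \frac{3t}{2^2}a_1 + \frac{1}{2^2}a_1 \cdot v_{(2,3)}$ of Proposition \ref{prop:products2} and comparing eigenspace components forces the $0$- and $\frac{1}{32}$-eigenspace parts to vanish, so that
\[
a_1 \cdot v_{(2,3)} = t a_1 + p, \qquad p := a_1 \cdot v_{(2,3)} - t a_1 \in M_{\frac{1}{4}}^{(a_1)};
\]
indeed $p$ is exactly the vector $\beta_{-1}$ constructed in the proof of Proposition \ref{prop:products2}.

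With this decomposition, expanding bilinearly and using $a_1 \cdot a_1 = a_1$ and $a_1 \cdot p = \frac{1}{4}p$ gives
\[
(a_1 \cdot v_{(2,3)}) \cdot (a_1 \cdot v_{(2,3)}) = t^2 a_1 + \frac{t}{2}p + p \cdot p,
\]
so the whole problem reduces to computing $p \cdot p$. By the Monster fusion rule, $p \cdot p \in M_1^{(a_1)} \oplus M_0^{(a_1)}$. Its $1$-eigenspace part is $\langle a_1, p \cdot p\rangle a_1$ by Lemma \ref{lem:projection}, and the Frobenius property combined with Proposition \ref{prop:products2} and Proposition \ref{prop:frobeniusform1} yields $\langle a_1, p\cdot p\rangle = \frac{1}{4}\langle p, p\rangle = \frac{1}{4}\left(\langle a_1 \cdot v_{(2,3)}, a_1 \cdot v_{(2,3)}\rangle - t^2\right) = \frac{t(1-t)}{2^4}$.

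The real obstacle is pinning down the $0$-eigenspace component $q := p \cdot p - \frac{t(1-t)}{2^4}a_1 \in M_0^{(a_1)}$, since this is a genuinely new product that the fusion rule alone only localises to $M_1^{(a_1)} \oplus M_0^{(a_1)}$, and $\mathrm{ad}_{a_1}$ cannot separate vectors inside $M_0^{(a_1)}$. I would recover $q$ by the same fusion-rule bookkeeping used in Propositions \ref{prop:products1}--\ref{prop:products3}: forming products of the already-available $0$- and $\frac{1}{4}$-eigenvectors $\alpha_\bullet, \beta_\bullet$, applying $\mathrm{ad}_{a_1}$, and equating the fusion-rule prediction with the direct expansion to solve a small linear system for $p \cdot p = \beta_{-1} \cdot \beta_{-1}$, then re-expressing auxiliary vectors such as $(a_3 + a_{-3}) \cdot v_{(1,3)}$ in terms of $B$ via the dependencies of Proposition \ref{prop:nullspace}. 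Collecting terms gives the first displayed identity.

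The second identity should then be almost immediate. Proposition \ref{prop:nullspace} gives $(a_1 - a_{-1}) \cdot v_{(2,3)} = t(a_1 - a_{-1})$, that is $a_{-1} \cdot v_{(2,3)} = p + t a_{-1}$; moreover the automorphism $\tau(a_2)$ fixes $v_{(2,3)}$ and interchanges $a_1 \leftrightarrow a_{-1}$, hence fixes $p$, so that $p \in M_{\frac{1}{4}}^{(a_{-1})}$ and $a_{-1} \cdot p = \frac{1}{4}p$. Expanding $(a_{-1} \cdot v_{(2,3)}) \cdot (a_1 \cdot v_{(2,3)}) = (t a_{-1} + p)\cdot(t a_1 + p)$ and using $a_1 \cdot a_{-1} = 0$ (the subalgebra $\langle\langle a_1, a_{-1}\rangle\rangle$ is of type $2B$) then collapses the product to $\frac{t}{2}p + p \cdot p = (a_1 \cdot v_{(2,3)}) \cdot (a_1 \cdot v_{(2,3)}) - t^2 a_1$, which is precisely the stated formula.
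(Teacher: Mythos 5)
Your setup is correct, and in fact your objects coincide with the paper's: your $p$ is exactly the vector $\beta_{-1}$ and your auxiliary $0$-eigenvector is the paper's $\alpha_{-1}$; your determination of the $1$-component of $p \cdot p$ via the Frobenius form checks out; and your derivation of the second displayed identity from the first via Proposition \ref{prop:nullspace} is complete and agrees with the paper's (which writes $(a_{-1} \cdot v_{(2,3)}) \cdot (a_1 \cdot v_{(2,3)}) = (a_1 \cdot v_{(2,3)})^2 - t(a_1 - a_{-1}) \cdot (a_1 \cdot v_{(2,3)})$ and invokes Proposition \ref{prop:products2}). The genuine gap is at the point you yourself flag as ``the real obstacle'': the $M_0^{(a_1)}$-component $q$ of $p \cdot p$ is never computed, and that is where the entire content of the proposition sits --- granting the stated formula, $q = -\frac{t}{2^2}\alpha_{-1} - \frac{(2t-1)t}{2^4}a_{-1}$, so the coefficients of $a_{-1}$ and $v_{(2,3)}$ in the statement come precisely from $q$. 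Deferring to ``the same fusion-rule bookkeeping'' as Propositions \ref{prop:products1}--\ref{prop:products3} does not close this: that bookkeeping only succeeds if you exhibit a product whose direct expansion avoids the unknown $(a_1 \cdot v_{(2,3)})^2$ while its fusion-rule image contains it, and your sketch never identifies such a product; a generic product of the available eigenvectors $\alpha_\bullet$, $\beta_\bullet$ involving $a_1 \cdot v_{(2,3)}$ puts the unknown on both sides and yields no usable equation.

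The identity that does the job --- and is the heart of the paper's proof --- rests on the observation that $\alpha_{-1} - \beta_{-1} = \frac{t}{2^2}a_1 - \frac{1}{2^2}v_{(2,3)}$ contains no $a_1 \cdot v_{(2,3)}$ term. Hence $(\alpha_{-1} - \beta_{-1}) \cdot \alpha_{-1}$ expands entirely in products known from Propositions \ref{prop:products2}, \ref{prop:products3} and \ref{prop:nullspace}, so the left-hand side of the fusion-rule consequence
\[
a_1 \cdot \bigl( (\alpha_{-1} - \beta_{-1}) \cdot \alpha_{-1} \bigr) = -\frac{1}{2^2}\, \beta_{-1} \cdot \alpha_{-1}
\]
is explicitly computable, while the right-hand side equals $-\frac{1}{2^2}\bigl( p \cdot p + \frac{t}{2^4}\,p - \frac{1}{2^2}\, p \cdot v_{(2,3)} \bigr)$ and contains the unknown $p \cdot p$ linearly, the product $p \cdot v_{(2,3)}$ being known from Proposition \ref{prop:products3}. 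Solving this single equation gives $p \cdot p$, hence $q$, hence the first displayed formula. With that identity inserted and expanded, your argument becomes a complete proof essentially identical to the paper's, your treatment of the $1$-component and of the second identity being a slightly cleaner repackaging of the same computation.
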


\begin{proof}
Recall that 
\begin{align*}
\alpha_{-1} &= -\frac{3t}{2^2}a_1 - \frac{1}{2^2} v_{(2,3)} + a_1 \cdot v_{(2,3)} \in M_0^{(a_1)} \\
\beta_{-1} &= -ta_1 + a_1 \cdot v_{(2,3)} \in M_{\frac{1}{2^2}}^{(a_1)}
\end{align*}
and so 
\[
a_1 \cdot ((\alpha_{-1} - \beta_{-1}) \cdot \alpha_{-1}) = -\frac{1}{2^2} \beta_{-1} \cdot \alpha_{-1}. 
\]
As 
\[
(\alpha_{-1} - \beta_{-1}) \cdot \alpha_{-1} = -\frac{(2t - 1)t}{2^4}(a_1 - a_{-1}) -\frac{t - 1}{2^4}a_1 \cdot v_{(2,3)} + \frac{3t - 2}{2^4} a_{-1} \cdot v_{(2,3)},
\]
we can explicitly calculate the value of $a_1 \cdot ((\alpha_{-1} - \beta_{-1}) \cdot \alpha_{-1})$.

Moreover, using the value of $v_{(2,3)} \cdot (a_1 \cdot v_{(2,3)})$ from Proposition \ref{prop:products3}, we can calculate that
\[
\beta_{-1} \cdot \alpha_{-1} = (a_1 \cdot v_{(2,3)}) \cdot (a_1 \cdot v_{(2,3)}) - \frac{(11t -1)t}{2^4} a_1 - \frac{(2t -1)t}{2^4}a_{-1} + \frac{t}{2^4}v_{(2,3)} + \frac{3t + 2}{2^4} a_1 \cdot v_{(2,3)}.
\]
We can then use these values to find the value of $(a_1 \cdot v_{(2,3)}) \cdot (a_1 \cdot v_{(2,3)})$ as required.

As we have the linear dependency $(a_1 - a_{-1}) \cdot v_{(2,3)} - t(a_1 - a_{-1}) = 0$, we must have
\[
(a_{-1} \cdot v_{(2,3)}) \cdot (a_1 \cdot v_{(2,3)}) = (a_1 \cdot v_{(2,3)}) \cdot (a_1 \cdot v_{(2,3)}) - t(a_1 - a_{-1}) \cdot (a_1 \cdot v_{(2,3)}).
\]
All these product values are known and so we have the value of $(a_{-1} \cdot v_{(2,3)}) \cdot (a_1 \cdot v_{(2,3)})$ as required.
\end{proof}

\begin{prop}
\label{prop:products5}
\begin{align*}
(a_2 \cdot v_{(1,3)}) \cdot (a_1 \cdot v_{(2,3)}) = \frac{t^2}{2^5}(a_1 - a_{-1} + a_2 - a_{-2}) + \frac{(2t - 1)t}{2^5}a_3 - \frac{(2t + 1)t}{2^5}a_{-3} + \frac{t}{2^5}v_{(1,2)} &\\
+ \frac{t}{2^6}(v_{(1,3)} + v_{(2,3)}) + \frac{t}{2^3}(a_1 \cdot v_{(1,2)} + a_2 \cdot v_{(1,3)} - a_3 \cdot v_{(1,2)})&.
\end{align*}
\end{prop}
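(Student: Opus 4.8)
The plan is to compute $P := (a_2 \cdot v_{(1,3)}) \cdot (a_1 \cdot v_{(2,3)})$ by reducing everything to products already obtained in Propositions \ref{prop:products1}--\ref{prop:products4}, exploiting the $\tau(a_1)$-grading and the linear dependencies of Proposition \ref{prop:nullspace}. Writing $S_i := (a_i + a_{-i}) \cdot v_{(j,k)}$ and $D_i := (a_i - a_{-i}) \cdot v_{(j,k)}$ (for $\{i,j,k\} = \{1,2,3\}$), Proposition \ref{prop:nullspace} gives $D_1 = t(a_1 - a_{-1})$ and $D_2 = t(a_2 - a_{-2})$, so that $a_1 \cdot v_{(2,3)} = \tfrac{1}{2}(S_1 + D_1)$ and $a_2 \cdot v_{(1,3)} = \tfrac{1}{2}(S_2 + D_2)$, whence
\[
P = \tfrac{1}{4}\big(S_2 S_1 + S_2 D_1 + D_2 S_1 + D_2 D_1\big).
\]
The term $D_2 D_1 = t^2 (a_2 - a_{-2}) \cdot (a_1 - a_{-1})$ is a product of axes lying in the $4A$ subalgebra $\langle\langle a_1, a_2 \rangle\rangle$, so it can be read directly off Table \ref{tab:IPSS10} (and supplies the $t^2$-coefficients in the stated formula); the remaining three terms are the real work.

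For these I would use the $a_1$-eigenstructure. From Proposition \ref{prop:products2} one checks that $a_1 \cdot v_{(2,3)}$ has no $0$- and no $\tfrac{1}{2^5}$-eigencomponent, so $a_1 \cdot v_{(2,3)} = t a_1 + \beta_{-1}$ with $\beta_{-1} \in M_{\frac{1}{2^2}}^{(a_1)}$, while $S_2 \in M_{\{1,0,\frac{1}{2^2}\}}^{(a_1)}$ has eigencomponents recoverable from known data: its $1$-part is $\langle a_1, S_2 \rangle a_1$ by Lemma \ref{lem:projection} and Proposition \ref{prop:frobeniusform1}, its $\tfrac{1}{2^2}$-part is $4(a_1 \cdot S_2 - \langle a_1, S_2 \rangle a_1)$ with $a_1 \cdot S_2$ known from Proposition \ref{prop:products1}, and its $0$-part is the remainder. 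Multiplying these eigencomponents by $\beta_{-1}$ and by $D_1 = t(a_1 - a_{-1})$ and invoking the Monster fusion rule then sorts each contribution into a definite eigenspace: the $1$-eigenparts are scalar multiples of $a_1$ computed from the form (Propositions \ref{prop:orthogonality} and \ref{prop:frobeniusform1}), and the $\tfrac{1}{2^2}$-eigenparts are pinned down in full by the triple-product identities $a_1 \cdot (\alpha \cdot \beta) = \tfrac{1}{2^2}\,\alpha \cdot \beta$ (for $\alpha \in M_0^{(a_1)}$, $\beta \in M_{\frac{1}{2^2}}^{(a_1)}$) exactly as in the proofs of Propositions \ref{prop:products1} and \ref{prop:products2}.

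The main obstacle is that the two factors sit in different dihedral sectors --- $a_2 \cdot v_{(1,3)}$ is governed by $\langle\langle a_1, a_3 \rangle\rangle$ entangled with the outside axis $a_2$, and $a_1 \cdot v_{(2,3)}$ by $\langle\langle a_2, a_3 \rangle\rangle$ entangled with $a_1$ --- so no single dihedral subalgebra computes $P$, and a naive expansion of, say, $S_2 \cdot (a_1 \cdot v_{(2,3)})$ merely reproduces $P + \tau(a_1)(P)$ and is circular. Breaking this circularity with the fusion rule forces the evaluation of genuinely new \emph{cross} products such as $a_{\pm 2} \cdot (a_1 \cdot v_{(2,3)})$ (equivalently, via the order-$3$ automorphism $\psi_\sigma$, products of the form $a_1 \cdot (a_3 \cdot v_{(1,2)})$), none of which appear in Table \ref{tab:IPSS10}; I expect to have to compute these auxiliary products first, again by choosing suitable $M_0^{(a_1)}$- and $M_{\frac{1}{2^2}}^{(a_1)}$-eigenvectors whose expansions contain the required terms and using the already-known value of $v_{(1,2)} \cdot v_{(1,3)}$. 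Throughout, $\psi_\sigma$ (which fixes $t$, since the only finite-order $\mathbb{R}$-automorphism of $\mathbb{R}[t]$ of order dividing $3$ is the identity) is the essential labour-saving device, as it identifies each new cross product with a $\sigma$-translate of one already computed. Once all pieces are assembled, $P$ follows by linearity, and the claimed coefficients can be checked against the relation $\langle P, b \rangle = \langle a_2 \cdot v_{(1,3)}, (a_1 \cdot v_{(2,3)}) \cdot b \rangle$ for $b \in B$.
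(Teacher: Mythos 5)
Your plan correctly disposes of the easy pieces but never actually computes the hard one, and the tools you name for it cannot do so; this is a genuine gap. In the decomposition $P = \tfrac14(S_2S_1 + S_2D_1 + D_2S_1 + D_2D_1)$, the terms $S_2D_1$, $D_2S_1$, $D_2D_1$ are (after Proposition \ref{prop:nullspace}) only triple products of the form $a_{\pm i}\cdot\bigl(a_j\cdot v_{(k,l)}\bigr)$, so they are indeed recoverable from Proposition \ref{prop:products1} and the auxiliary cross products you identify. But they determine $P$ only up to its $G$-symmetrization: $S_2\cdot S_1 = P + \tau(a_1)(P) + \tau(a_2)(P) + \tau(a_1)\tau(a_2)(P)$ is a sum of $G$-images of the unknown itself, so no collection of lower-order cross products can produce it --- this is the same circularity you flag for $S_2\cdot(a_1\cdot v_{(2,3)}) = P + \tau(a_1)(P)$, and your proposed remedy (computing $a_{\pm2}\cdot(a_1\cdot v_{(2,3)})$ first) does not touch it. Nor do your eigenspace tools: by Lemma \ref{lem:projection} the form pins down only $1$-components; multiplication by $a_1$ annihilates $0$-components, which genuinely occur since $\tfrac14 * \tfrac14 = \{1,0\}$ (so the $0$-part of $s_{\frac14}\cdot\beta_{-1}$ is invisible to both devices); and the identity $a_1\cdot(\alpha\cdot\beta) = \tfrac14\,\alpha\cdot\beta$ determines nothing unless one side can be evaluated independently, whereas for $\alpha = s_0$, $\beta = \beta_{-1}$ both sides expand into $P$-type unknowns. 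The paper's proof supplies precisely the missing device: it forms $\alpha_{-1}-\beta_{-1} = \tfrac{t}{4}a_1 - \tfrac14 v_{(2,3)}$ and, after rewriting with Proposition \ref{prop:nullspace}, $\alpha_{-2}-\beta_{-2} = -\tfrac{2t}{3}a_1 - \tfrac13 v_{(2,3)} + \tfrac{16}{3}\,a_3\cdot v_{(1,2)}$, combinations in which all the $a\cdot v$-type elements cancel or occur only in products already known from Propositions \ref{prop:products1}--\ref{prop:products3}; it then computes $a_1\cdot\bigl((\alpha_{-1}-\beta_{-1})\cdot(\alpha_{-2}-\beta_{-2})\bigr)$ outright and equates it, via the fusion rule, with $-\tfrac14\bigl(\alpha_{-1}\cdot\beta_{-2}+\alpha_{-2}\cdot\beta_{-1}\bigr) - \tfrac14\langle\beta_{-1},\beta_{-2}\rangle a_1$, in which $P$ appears with coefficient $\tfrac{16}{3}$ while the second quartic unknown $(a_1\cdot v_{(2,3)})\cdot(a_3\cdot v_{(1,2)})$ cancels between the two cross terms. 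That engineered cancellation, turning the fusion identity into a linear equation in the single unknown $P$, is the substance of the proof and has no counterpart in your outline.

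There is also a concrete computational error that shows the plan was not carried through: $D_2\cdot D_1 = t^2(a_2-a_{-2})\cdot(a_1-a_{-1}) = 0$, as one checks immediately from the $4A$ products in Table \ref{tab:IPSS10} (both the $v_{(1,2)}$-terms and the axis terms cancel), so it does not ``supply the $t^2$-coefficients in the stated formula''; those coefficients emerge from the cross-term equation above. Your closing consistency check is likewise partly circular, since $\langle P, b\rangle = \langle a_2\cdot v_{(1,3)}, (a_1\cdot v_{(2,3)})\cdot b\rangle$ requires products $(a_1\cdot v_{(2,3)})\cdot b$ that are themselves unknown when $b$ is one of the basis vectors $a_i\cdot v_{(j,k)}$. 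The sound ingredients of your proposal --- the identification $a_1\cdot v_{(2,3)} = ta_1 + \beta_{-1}$ with $\beta_{-1}\in M_{\frac14}^{(a_1)}$, the eigencomponent decomposition of $S_2$, the recognition that the needed cross products are $\psi_\sigma$-translates of $a_1\cdot(a_3\cdot v_{(1,2)})$ (Proposition \ref{prop:products1} with the indices $2$ and $3$ exchanged), and the observation that $\phi_\sigma$ fixes $t$ --- all appear, implicitly or explicitly, in the paper as well, but they account only for the $\tau$-odd parts of $P$.
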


\begin{proof}
Recall that
\[
\alpha_{-2} := -\frac{2t}{3} a_1 - \frac{1}{3} v_{(2,3)} + \frac{4}{3}(a_2 + a_{-2}) \cdot v_{(1,3)}+ \frac{4}{3}(a_3 + a_{-3}) \cdot v_{(1,2)} \in M_0^{(a_1)}.
\]
We also let
\begin{align*}
\beta_{-2} &:= \alpha_0 \cdot \beta_1 - \frac{1}{2^2}\beta_0 - \frac{1}{2^3}\beta_1 - \frac{8}{3}\beta_{-1} = \frac{4}{3}((a_2 + a_{-2}) \cdot v_{(1,3)} - (a_3, a_{-3}) \cdot v_{(1,2)}) \in M_{\frac{1}{2^2}}^{(a_1)}.
\end{align*}
Then
\[
a_1 \cdot ((\alpha_{-1} - \beta_{-1}) \cdot (\alpha_{-2} -\beta_{-2})) = -\frac{1}{2^2}(\alpha_{-1} \cdot \beta_{-2} + \alpha_{-2} \cdot \beta_{-1}) - \frac{1}{2^2} \langle \beta_{-1}, \beta_{-2} \rangle a_1.
\]
Both $\alpha_{-2}$ and $\beta_{-2}$ can be rewritten using the linear dependencies given in Proposition \ref{prop:nullspace} to give
\begin{align*}
\alpha_{-2} &:= -\frac{2t}{3} a_1 - \frac{4t}{3}(a_2 - a_{-2} + a_3 - a_{-3}) -\frac{1}{3}v_{(2,3)} + \frac{8}{3}(a_2 \cdot v_{(1,3)} + a_3 \cdot v_{(1,2)}) \\
\beta_{-2} &:= - \frac{4t}{3}(a_2 - a_{-2} + a_3 - a_{-3}) + \frac{8}{3}(a_2 \cdot v_{(1,3)} - a_3 \cdot v_{(1,2)}).
\end{align*}

Using these new representations, we calculate that 
\begin{align*}
(\alpha_{-1} - \beta_{-1}) \cdot (\alpha_{-2} -\beta_{-2}) &= \frac{(3t + 22)t}{2^2 \cdot 3^2}a_2 - \frac{(3t - 10)t}{2^2 \cdot 3^2}a_{-2} - \frac{(3t - 14)t}{2^2 \cdot 3^2} a_3 + \frac{(3t + 2)t}{2^2 \cdot 3^2}a_{-3} \\
&- \frac{2t - 1}{2^2 \cdot 3}v_{(2,3)} + \frac{t}{2 \cdot 3} a_1 \cdot v_{(2,3)} + \frac{t}{2}(a_2 \cdot v_{(1,3)} - a_3 \cdot v_{(1,2)}).
\end{align*}
and so the value of $a_1 \cdot ((\alpha_{-1} - \beta_{-1}) \cdot (\alpha_{-2} -\beta_{-2}))$ may also be explicitly calculated.

Next, we calculate that $\langle \beta_{-1}, \beta_{-2} \rangle = 0$ and 
\begin{align*}
\alpha_{-1} \cdot \beta_{-2} + \alpha_{-2} \cdot \beta_{-1}  &= \frac{16}{3}(a_1 \cdot v_{(2,3)}) \cdot (a_2 \cdot v_{(1,3)}) - \frac{(8t - 1)t}{2^2 \cdot 3} a_1 + \frac{(2t - 1)t}{2^2 \cdot 3}a_{-1} \\
&- \frac{(9t + 4)t}{2^2 \cdot 3^2}(a_2 - a_{-3}) + \frac{(9t - 4)t}{2^2 \cdot 3^2}(a_{-2} - a_3) - \frac{t}{2^2 \cdot 3} v_{(2,3)} \\
&- \frac{t + 1}{2 \cdot 3} (a_2 \cdot v_{(1,3)} - a_3 \cdot v_{(1,2)}).
\end{align*}
Thus we can use these values to calculate the product $(a_1 \cdot v_{(2,3)}) \cdot (a_2 \cdot v_{(1,3)})$ as required.
\end{proof}

At this stage, we have all algebra products of the form $u \cdot v$ for $u,v \in \bar{B}$. We can now calculate the remaining values of the Frobenius form on $M$. 

\begin{prop}
\label{prop:frobeniusform2}
Some values of the Frobenius form $\langle \, , \, \rangle$ on the vectors of $\bar{B}$ are as given in Table 4.
\end{prop}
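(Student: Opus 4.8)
The plan is to leverage the defining property $\langle u, v\cdot w\rangle = \langle u\cdot v, w\rangle$ of the Frobenius form together with the fact, established in Propositions \ref{prop:products1}–\ref{prop:products5}, that every product $u\cdot v$ with $u,v\in\bar B$ is now known explicitly as a linear combination of the basis $B$. The crucial observation is that each vector of $\bar B$ whose form values still need to be determined — namely the product vectors $a_{\pm i}\cdot v_{(j,k)}$ — is literally a product of two elements of $\bar B$. Hence to compute an outstanding entry $\langle u,\, a_i\cdot v_{(j,k)}\rangle$ I would rewrite it as $\langle u\cdot a_i,\, v_{(j,k)}\rangle$ (or, equivalently, $\langle u\cdot v_{(j,k)},\, a_i\rangle$), expand the now-known product $u\cdot a_i$ in the basis, and thereby reduce the entry to a $\mathbb{R}[t]$-linear combination of form values that are either recorded in Table 3 (Proposition \ref{prop:frobeniusform1}) or obtainable from the dihedral subalgebra data in Table \ref{tab:IPSS10}.

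As an illustration, the diagonal value $\langle a_1\cdot v_{(2,3)},\, a_1\cdot v_{(2,3)}\rangle$ becomes $\langle a_1\cdot(a_1\cdot v_{(2,3)}),\, v_{(2,3)}\rangle$; since Proposition \ref{prop:products2} gives $a_1\cdot(a_1\cdot v_{(2,3)}) = \tfrac{3t}{2^2}a_1 + \tfrac{1}{2^2}a_1\cdot v_{(2,3)}$, the value collapses to $\tfrac{3t}{2^2}\langle a_1, v_{(2,3)}\rangle + \tfrac{1}{2^2}\langle a_1\cdot v_{(2,3)}, v_{(2,3)}\rangle$, and both of these appear in Table 3. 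I expect that every entry of Table 4 can be reached by one or two applications of this same reduction.

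The bookkeeping is kept manageable by the symmetries of $M$. The group $G\cong 2^2$ and the order-three automorphism $\psi_{\sigma}$ arising from $\sigma=(a_1,a_2,a_3)(a_{-1},a_{-2},a_{-3})$ both preserve the Frobenius form, the latter up to the induced ring automorphism $\phi_{\sigma}$ of $R_{4A}$, which fixes the subfield $\mathbb{R}$ and, by the symmetry of its definition, the parameter $t$. Consequently a single computed value determines an entire orbit of entries, so I would evaluate one representative per orbit and generate the remainder by applying these maps.

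The step I expect to require the most care is ensuring that the reduction terminates consistently. Because the product formulas of Propositions \ref{prop:products1}–\ref{prop:products5} re-introduce the vectors $a_{\pm i}\cdot v_{(j,k)}$, a naive reduction of $\langle u,\, a_i\cdot v_{(j,k)}\rangle$ may yield the same unknown on the right-hand side; in such cases the Frobenius identity becomes a linear equation in $R_{4A}$ to be solved for the value. One must also confirm that the two natural ways of splitting a product vector give the same answer — a consistency check guaranteed in principle by the Frobenius property, but worth verifying on the representatives actually computed. No conceptual obstacle arises beyond this: the content of the proof is the organised evaluation of finitely many such reductions.
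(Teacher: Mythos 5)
Your proposal is correct and follows essentially the same route as the paper, which likewise derives the Table 4 entries by applying the Frobenius identity $\langle u, v \cdot w \rangle = \langle u \cdot v, w \rangle$ to the algebra products established in Propositions \ref{prop:products1}--\ref{prop:products5} and reducing to the values in Table 3. Your worked example $\langle a_1 \cdot v_{(2,3)}, a_1 \cdot v_{(2,3)} \rangle = \tfrac{3t}{2^2}\langle a_1, v_{(2,3)} \rangle + \tfrac{1}{2^2}\langle a_1 \cdot v_{(2,3)}, v_{(2,3)} \rangle = \tfrac{(3t+1)t}{2^2}$ reproduces the stated entry exactly.
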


\begin{proof}
These values follow from the fact that for all $u, v, w \in M$, we must have $\langle u, v \cdot w \rangle = \langle u \cdot v, w \rangle$ and from the algebra product values determined in Proposition \ref{prop:products2}. 
\end{proof}

\begin{table}%
\begin{center}
\def\arraystretch{1.75}
\begin{tabular}{| >{$} l <{$} >{$} l <{$}  >{$} c <{$} | } \hline
u & v & \langle u,v \rangle \\ \hline
a_1 \cdot v_{(2,3)} & a_1 \cdot v_{(2,3)} & \frac{(3t +1)t}{2^2} \\
a_1 \cdot v_{(2,3)} & a_2 \cdot v_{(1,3)} & \frac{(2t +1)t}{2^4} \\ \hline
\end{tabular}
\end{center}
\label{tab:frobeniusform2}
\caption{Some values of the Frobenius form $\langle \, , \, \rangle$ on $M$}
\end{table}

\subsection{The algebra structure}

\begin{proof}[Proof of Theorem \ref{thm:4A}]
We have shown in Propositions \ref{prop:products1}, \ref{prop:products2}, \ref{prop:products3}, \ref{prop:products4} and \ref{prop:products5} that the algebra product on $M$ is closed on the set
\[
\bar{B} := B \cup \{ a_{-1} \cdot v_{(2,3)}, a_{-2} \cdot v_{(1,3)},  a_{-3} \cdot v_{(1,2)}\}.
\]
Moreover, from Proposition \ref{prop:nullspace}, the elements of $\bar{B}$ satisfy the following linear dependencies:
\begin{align*}
(a_1 - a_{-1}) \cdot v_{(2,3)} - t(a_1 - a_{-1}) = 0 \\
(a_2 - a_{-2}) \cdot v_{(1,3)} - t(a_2 - a_{-2}) = 0 \\
(a_3 - a_{-3}) \cdot v_{(1,2)} - t(a_3 - a_{-3}) = 0.
\end{align*}
Thus we can take the following to be a basis of $M$:
\[
B = \{a_1, a_{-1}, a_2, a_{-2}, a_3, a_{-3},  v_{(1,2)}, v_{(1,3)}, v_{(2,3)},  a_1 \cdot v_{(2,3)},  a_2 \cdot v_{(1,3)}, a_3 \cdot v_{(1,2)} \}.
\]

The algebra product values of certain pairs of elements of $B$ are given in Table 5. With the exception of products given by the values of dihedral algebras, from the eight products given in this table, all other products of pairs of basis vectors can be recovered from the action of $G = \langle \tau(a) \mid a\in A \rangle$ and $\sigma$. 
 
From these products, we can calculate the eigenspaces of the adjoint action of the axis $a_1$. With the exception of the one-dimensional $1$-eigenspace, these are given in Table 6. Again, the eigenvectors for the remaining $5$ axes can be recovered from the action of $G$ and $\sigma$. We can check using these eigenvectors and the products in Table 5 that $M$ obeys the Monster fusion rule. 

Finally, we use these algebra products to check that the values in Tables 3 and 4 do indeed give a Frobenius form on $M$. 
\end{proof}

Theorem \ref{thm:4A} shows that there exists a $12$-dimensional axial algebra $M_{4A}$ of Monster type over the ring $\mathbb{R}[t]$. If we let $t$ take any value in $\mathbb{R}$ then this gives a $12$-dimensional axial algebra of Monster type over the field $\mathbb{R}$ that we denote $M(t)$. 

In order to prove Theorem \ref{thm:4Ab}, we first require a few additional definitions and results.

\begin{defn}
Let $(V,A)$ be a primitive axial algebra of Monster type that admits a Frobenius form $\langle \, , \, \rangle$. Then we define the \emph{projection graph} of $V$ to be the graph with vertices $A$ such that $a_0, a_1 \in A$ are joined by an edge if and only if $\langle a_0, a_1 \rangle \neq 0$  
\end{defn}

The projection graph of $M(t)$ is given in Figure 2. 

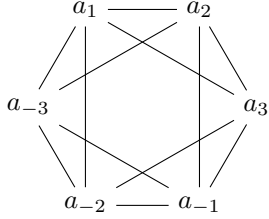
\begin{figure}
\begin{center}
\begin{tikzpicture}
\foreach \i in {1,2,3}{
    \draw (180 - \i*60: 1.5cm) node (a\i) {$a_{\i}$};
    }
\foreach \i in {1,2,3}{
    \draw (360 - \i*60: 1.5cm) node (am\i) {$a_{-\i}$};
    }
\foreach \i in {2,3}{
    \draw (a1) -- (a\i);
    \draw (a1) -- (am\i);
    \draw (am1) -- (a\i);
    \draw (am1) -- (am\i);
    }
\draw (a2) -- (a3);
\draw (a2) -- (am3);
\draw (am2) -- (a3);
\draw (am2) -- (am3);
\end{tikzpicture}
\end{center}
\label{fig:projection}
\caption{The projection graph of $M(t)$}
\end{figure}

\begin{defn}
Let $(V,A)$ be a primitive axial algebra. The \emph{radical} $R(V,A)$ of $V$ with respect to $A$ is the unique largest ideal of $V$ containing no axes from $A$. 
\end{defn} 

\begin{thm}[{\cite[Theorem 4.11]{KMS18}}]
\label{thm:radical}
Let $(V,A)$ be a primitive axial algebra that admits a Frobenius form $\langle \, , \, \rangle$. Then $R(V,A) = V^{\perp}$ where $V^{\perp} = \{ u \in V \mid \langle u, v \rangle = 0 \textrm{ for all } v \in V\}$.
\end{thm}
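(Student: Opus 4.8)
I would prove the theorem by showing two things about $V^\perp$: (a) it is an ideal of $V$ that contains no axis of $A$, and (b) every ideal of $V$ containing no axis of $A$ is contained in $V^\perp$. Together these say exactly that $V^\perp$ is the unique largest ideal avoiding $A$, which is the definition of $R(V,A)$; hence $R(V,A) = V^\perp$.

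For (a), the Frobenius identity gives the ideal property at once: if $u \in V^\perp$ and $w \in V$, then $\langle u \cdot w, v \rangle = \langle u, w \cdot v \rangle = 0$ for all $v \in V$, so $u \cdot w \in V^\perp$. No axis lies in $V^\perp$ because the normalisation $\langle a, a \rangle = 1 \neq 0$ forces $a \notin V^\perp$ for every $a \in A$.

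The heart of the proof is (b), for which the key claim is: \emph{if an ideal $I$ is not contained in $V^\perp$, then $I$ contains an axis of $A$.} Suppose $I \not\subseteq V^\perp$, so $\langle u, v \rangle \neq 0$ for some $u \in I$ and $v \in V$. Since $A$ generates $V$, I would write $v$ as a linear combination of bracketed products of axes and select a single such product $m$ with $\langle u, m \rangle \neq 0$. Writing $m = m' \cdot m''$ and applying the Frobenius identity as $\langle u, m' \cdot m'' \rangle = \langle u \cdot m', m'' \rangle$ with $u \cdot m' \in I$, I can induct on the number of axis-factors of $m$ to reduce to $\langle w, a \rangle \neq 0$ for some $w \in I$ and a single axis $a \in A$.

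Finally, I would extract $a$ itself using primitivity. Decomposing $w$ into $\mathrm{ad}_a$-eigenvectors, Lemma~\ref{lem:projection} identifies its $1$-component as $\langle a, w \rangle a$, which is nonzero. Because $\mathrm{ad}_a$ is semisimple with eigenvalues in $\{1, 0, \tfrac14, \tfrac1{32}\}$ and $\mathrm{char}\,\mathbf{k} = 0$, Lagrange interpolation furnishes a polynomial $p$ with $p(1) = 1$ vanishing on the other eigenvalues, so that $p(\mathrm{ad}_a)(w)$ equals this $1$-component; since $I$ is $\mathrm{ad}_a$-stable this element lies in $I$, and dividing by the nonzero scalar $\langle a, w \rangle$ yields $a \in I$. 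This proves the claim, and applying it to $R(V,A)$ (which contains no axis) gives $R(V,A) \subseteq V^\perp$, completing (b). I expect the reduction-and-projection argument to be the main obstacle: one must ensure that transferring axis-factors across the form keeps the element inside $I$ — guaranteed by the ideal property — and that the projection onto the $1$-eigenspace is realisable as a polynomial in $\mathrm{ad}_a$, which primitivity and the characteristic-zero hypothesis secure.
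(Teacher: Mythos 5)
Your proof is correct, but note that the paper itself offers no proof of this statement: it is quoted verbatim from \cite[Theorem 4.11]{KMS18}, so there is no internal argument to compare against. Your argument is essentially the standard one (and, as far as I can tell, the one in \cite{KMS18}): part (a) via the Frobenius identity and the normalisation $\langle a, a \rangle = 1$; part (b) via induction on the length of a bracketed monomial in the axes, followed by extraction of the axis using primitivity and a polynomial in $\mathrm{ad}_a$ that projects onto the $1$-eigenspace. It is worth observing that this last step is exactly the trick the paper does use in its proof of Lemma \ref{lem:ideal}, where $f_\Lambda(\mathrm{ad}_b)$ with $\Lambda = \{0, \frac{1}{4}, \frac{1}{32}\}$ is applied to an element of an ideal to isolate the component along $b$ (your Lagrange interpolant is just a rescaling of that $f_\Lambda$). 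Two small points of hygiene: the theorem is stated for a general primitive axial algebra, so you should phrase the interpolation step over the fusion set $\mathcal{F}$ rather than the specific Monster eigenvalues $\{1, 0, \frac{1}{4}, \frac{1}{32}\}$ (the argument is unchanged, provided $\mathcal{F}$ is finite and $1 - \lambda$ is invertible for $\lambda \in \mathcal{F} \setminus \{1\}$); and your appeal to Lemma \ref{lem:projection} and the final division by $\langle a, w \rangle$ silently use the standing hypotheses on the coefficient ring (integral domain, invertibility of the relevant scalars), which hold in the paper's setting but should be flagged if you want the statement in full generality.
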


\begin{lem}
\label{lem:ideal}
The radical $M(t)^{\perp}$ of the Frobenius form on $M(t)$ is the unique maximal ideal of $M(t)$ for all $t \in \mathbb{R}$.
\end{lem}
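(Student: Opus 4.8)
The plan is to reduce everything to two facts about ideals of $M(t)$: that an ideal meeting an axis nontrivially under the form must contain that axis, and that the projection graph of Figure~\ref{fig:projection} is connected. By Theorem~\ref{thm:radical} we already know that $M(t)^\perp = R(M(t),A)$ is an ideal, and it contains no axis since $\langle a, a\rangle = 1 \neq 0$ for every $a \in A$; in particular $M(t)^\perp$ is a proper ideal. Thus it suffices to prove that every proper ideal of $M(t)$ is contained in $M(t)^\perp$, for then $M(t)^\perp$ is the unique maximal ideal (the statement holding trivially, with $M(t)^\perp = \{0\}$, in the simple cases). I will establish this by showing \emph{(a)} any ideal containing an axis is all of $M(t)$, and \emph{(b)} any ideal containing no axis lies inside $M(t)^\perp$.

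The technical heart is the following spectral-projection observation. Let $I$ be an ideal, $u \in I$ and $a \in A$. Since $I$ is an ideal, $\mathrm{ad}_a(I) \subseteq I$, and since $a$ is an axis, $\mathrm{ad}_a$ is semisimple with eigenvalues in $\{1, 0, \tfrac14, \tfrac{1}{32}\}$. Let $p \in \mathbb{R}[x]$ be the interpolation polynomial with $p(1) = 1$ and $p(0) = p(\tfrac14) = p(\tfrac{1}{32}) = 0$; then $p(\mathrm{ad}_a)$ is the spectral projection onto $M_1^{(a)} = \langle a\rangle$, so by Lemma~\ref{lem:projection} we get $p(\mathrm{ad}_a)(u) = \langle a, u\rangle a$. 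Crucially $p(0) = 0$, so $p(x) = x\,q(x)$ and hence $p(\mathrm{ad}_a)(u) = q(\mathrm{ad}_a)(a \cdot u) \in I$, because $a \cdot u \in I$ and $I$ is $\mathrm{ad}_a$-invariant. Therefore $\langle a, u\rangle a \in I$, and as $\mathbb{R}$ is a field this forces $a \in I$ whenever $\langle a, u\rangle \neq 0$. Applying this with $u$ equal to an axis already in $I$ gives the single-step statement: if an axis $a$ lies in $I$ and $b \in A$ satisfies $\langle a, b\rangle \neq 0$, then $b \in I$. Since the projection graph of $M(t)$ is connected, an easy induction along its edges shows that $I$ then contains every axis of $A$; as $A$ generates $M(t)$, we conclude $I = M(t)$, proving \emph{(a)}.

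For \emph{(b)}, suppose $I$ contains no axis. By the observation above, $\langle a, u\rangle = 0$ for every $a \in A$ and every $u \in I$ (otherwise some axis would lie in $I$). I then propagate orthogonality from the axes to the whole algebra by induction on the length of a product of axes: for a product $m = m_1 \cdot m_2$, the Frobenius identity gives $\langle u, m\rangle = \langle u \cdot m_1, m_2\rangle$, where $u \cdot m_1 \in I$ and $m_2$ is a shorter product, so the inductive hypothesis applies. Since $M(t)$ is spanned by products of axes, this yields $\langle u, m\rangle = 0$ for all $m \in M(t)$, i.e.\ $u \in M(t)^\perp$, so $I \subseteq M(t)^\perp$. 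Combining \emph{(a)} and \emph{(b)}, every proper ideal lies in $M(t)^\perp$, which completes the proof. I expect the spectral-projection step to be the main obstacle: the point that makes it work is the vanishing $p(0) = 0$, which returns the projection $\langle a, u\rangle a$ into $I$; verifying connectivity of the projection graph is routine from the $4A$ shape (the nonzero form values are read off from Table~\ref{tab:IPSS10} and the computed form), and the induction in \emph{(b)} is bookkeeping.
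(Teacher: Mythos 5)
Your proof is correct, and its technical heart coincides with the paper's: both apply to an element of the ideal a polynomial in the adjoint map that annihilates the $0$-, $\frac{1}{4}$- and $\frac{1}{32}$-eigenspaces (your $p$ is the paper's $f_{\Lambda}$ with $\Lambda = \{0, \frac{1}{4}, \frac{1}{32}\}$, divided by the scalar $f_{\Lambda}(1)$), conclude that $\langle a, u \rangle a \in I$, and then exploit connectivity of the projection graph. The differences lie in how the argument is finished. The paper argues by contradiction: if a proper ideal $I$ contained an axis, then, since some axis must lie outside $I$, the pairing argument would force the projection graph to be disconnected; once $I$ is known to contain no axes, the containment $I \subseteq M(t)^{\perp}$ follows at once from Theorem \ref{thm:radical}, because the radical is by definition the largest ideal containing no axes. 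Your part \emph{(b)} instead re-proves that containment from scratch, by induction on the length of products of axes using the Frobenius identity; this makes the proof more self-contained (you only need $M(t)^{\perp}$ to be an ideal, not the full strength of the cited theorem) at the cost of duplicating machinery the paper simply quotes. One correction of emphasis: the vanishing $p(0) = 0$ is not what makes the spectral-projection step work. Since $u$ itself lies in $I$ and $I$ is closed under $\mathrm{ad}_a$ and under linear combinations, $q(\mathrm{ad}_a)(u) \in I$ for \emph{every} polynomial $q$, whatever its constant term --- this is exactly how the paper justifies $f_{\Lambda}(\mathrm{ad}_b)(a) \in I$. Your factorization $p(x) = x\,q(x)$ is valid but unnecessary, and presenting it as the crucial point slightly misidentifies where the content of the step lies (it lies in Lemmas \ref{lem:espaces} and \ref{lem:projection}, which identify $p(\mathrm{ad}_a)(u)$ with $\langle a, u \rangle a$).
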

\begin{proof}
Let $I$ be a proper ideal of $M(t)$. We will show that $I$ must be contained in $M(t)^{\perp}$. Suppose for contradiction that there exist an axis $a$ such that $a \in I$. As $I$ is proper, there must exist at least one further axis $b$ such that $b \notin I$. Then, from Lemma \ref{lem:projection}, we can write
\[
a = \langle a, b \rangle b + a_0 + a_{\frac{1}{4}} + a_{\frac{1}{32}} 
\]
where $a_\mu \in V_\mu^{(b)}$. 

Let $\Lambda := \{0, \frac{1}{4}, \frac{1}{32}\}$ and consider $f_\Lambda(x) \in \mathbb{R}[x]$ as defined in Lemma \ref{lem:espaces}. As $a \in I$ and $I$ is an ideal, $f_\Lambda(ad_b)(a) \in I$. Moreover, from Lemma \ref{lem:espaces}, 
\[
f_\Lambda(ad_b)(a) = \langle a, b \rangle f_\Lambda(ad_b)(b) + f_\Lambda(ad_b)(a_0 + a_{\frac{1}{4}} + a_{\frac{1}{32}} ) =  f_\Lambda(1)\langle a, b \rangle b
\]
and $f_\Lambda(1) \neq 0$. Thus, as $b \notin I$, we must have $\langle a, b \rangle = 0$. 

As this is true for all $a, b \in A$ such that $a \in I$ and all $b \notin I$, the projection graph of $M(t)$ must be disconnected. We can see from Figure 2 that this is not the case. Thus we can conclude that $I$ contains no axes from $A$ and so from Theorem \ref{thm:radical} is contained in $M(t)^\perp$ as required.
\end{proof}

\begin{proof}[Proof of Theorem \ref{thm:4Ab}]
From Lemma \ref{lem:ideal}, the algebra $M(t)$ is simple exactly when the Frobenius form $\langle \, , \, \rangle$ is non-degenerate, or equivalently when the determinant of the Gram matrix of $\langle \, , \, \rangle$ on $B$ is non-zero.

Using the values of the Frobenius form in Tables 3 and 4, we have calculated that the determinant of this Gram matrix is equal to 
\[
-\frac{t^3 (6 t - 1)^3 (4 t - 9)^6 }{2^{19} \cdot 3^3}.
\]
Thus if $t \notin \{0, \frac{1}{6}, \frac{9}{4} \}$ then the algebra $M(t)$ is simple. If $t = 0$ or $t = \frac{1}{6}$ then we calculate that $M(t)$ contains a $3$-dimensional ideal and if $t = \frac{9}{4}$ then $M(t)$ contains a $5$-dimensional ideal.
\end{proof}

\begin{landscape}

\begin{table}%
\begin{center}
\def\arraystretch{1.75}
\begin{tabular}{| >{$} l <{$} >{$} l <{$}  >{$} c <{$} | } \hline
u & v & u \cdot v \\ \hline
v_{(1,2)} &  v_{(1,3)} & \def\arraystretch{1.2}\begin{tabular}{ >{$} c <{$}} -\frac{8t}{3} a_1 + \frac{2t}{3}(a_2 - a_{-2} + a_3 - a_{-3}) + \frac{1}{2^2} (v_{(1,2)} + v_{(1,3)} - v_{(2,3)})\\ - \frac{4}{3}(2a_1 \cdot v_{(2,3)} - a_2 \cdot v_{(1,3)} - a_3 \cdot v_{(1,2)}) \end{tabular} \def\arraystretch{1.75}\\
a_1 & a_1 \cdot v_{(2,3)} & \frac{3t}{2^2}a_1 + \frac{1}{2^2} a_1 \cdot v_{(2,3)} \\
a_{-1} & a_1 \cdot v_{(2,3)} & -\frac{t}{2^2} a_1 + \frac{1}{2^2} a_1 \cdot v_{(2,3)} \\a
a_2 & a_1 \cdot v_{(2,3)} & -\frac{3t}{2^8}(a_1 - a_{-1}) + \frac{t}{2^4}(2a_2 + a_3 - a_{-3}) + \frac{1}{2^4}(2a_1 \cdot v_{(2,3)} + a_2 \cdot v_{(1,3)} - a_3 \cdot v_{(1,2)}) \\
v_{(1,2)} & a_1 \cdot v_{(2,3)} & -\frac{5t}{2^4 \cdot 3}a_1 - \frac{11t}{2^4 \cdot 3}a_{-1} - \frac{11t}{2^3 \cdot 3}a_2 - \frac{5t}{2^3 \cdot 3}a_{-2} + \frac{t}{2^3} v_{(1,2)} + \frac{1}{2^2}(a_1 \cdot v_{(2,3)} + a_2 \cdot v_{(1,3)}) \\
v_{(2,3)} & a_1 \cdot v_{(2,3)} & \frac{(2t-1)t}{2^2}(a_1 - a_{-1}) + \frac{1}{2^2}v_{(2,3)} + \frac{1}{2} a_1 \cdot v_{(2,3)} \\
a_1 \cdot v_{(2,3)} & a_1 \cdot v_{(2,3)} & \frac{(10t + 1)t}{2^4}a_1 - \frac{(2t - 1)t}{2^4} a_{-1} + \frac{t}{2^4}v_{(2,3)} + \frac{t}{2^2} a_1 \cdot v_{(2,3)} \\
a_1 \cdot v_{(2,3)} & a_2 \cdot v_{(1,3)} & \def\arraystretch{1.2} \begin{tabular}{>{$} c <{$}}\frac{t}{2^5}(a_1 - a_{-1} + a_2 - a_{-2}) + \frac{(2t - 1)t}{2^5} a_3 - \frac{(2t + 1)t}{2^5} a_{-3} + \frac{t}{2^6}(2v_{(1,2)} + v_{(1,3)} + v_{(2,3)}) \\+ \frac{t}{2^3}(a_1 \cdot v_{(2,3)} + a_2 \cdot v_{(1,3)} - a_3 \cdot v_{(1,2)}) \end{tabular} \\ \hline
\end{tabular}
\end{center}
\label{tab:algebraproducts}
\caption{Representative algebra products on $M$}
\end{table}

\begin{table}%
\begin{center}
\def\arraystretch{1.75}
\begin{tabular}{| >{$} c <{$} | >{$} c <{$} | >{$} c <{$} | } \hline

0 & \frac{1}{4} & \frac{1}{32} \\\hline 

\begin{tabular}{>{$} c <{$}}

\def\arraystretch{1}
\begin{tabular}{>{$} c <{$}}
-\frac{t}{2^2}a_1 - \frac{t}{2}(a_2 - a_{-2} + a_3 - a_{-3}) -\frac{1}{2^3}v_{(2,3)} \\
+ a_2 \cdot v_{(1,3)} + a_3 \cdot v_{(1,2)} \end{tabular}\\
\def\arraystretch{1.75}
-\frac{3t}{2^2}a_1 - \frac{1}{2^2} v_{(2,3)} + a_1 \cdot v_{(2,3)} \\
-\frac{1}{2} a_1 + 2a_2 + 2a_{-2} + v_{(1,2)} \\
-\frac{1}{2} a_1 + 2a_3 + 2a_{-3} + v_{(1,2)} \\
a_{-1} \\ 
\end{tabular}

&

\begin{tabular}{>{$} c <{$}}

\def\arraystretch{1}
\begin{tabular}{>{$} c <{$}}
\frac{t}{2}(a_2 - a_{-2} + a_3 - a_{-3}) - a_2 \cdot v_{(1,3)}\\
 + a_3 \cdot v_{(a_1, a_2)} \end{tabular}\\
\def\arraystretch{1.75}

-ta_1 + a_1 \cdot v_{(2,3)} \\
-\frac{1}{3}(a_1 + a_{-1}) -\frac{2}{3}(a_2 - a_{-2}) + v_{(1,2)} \\
-\frac{1}{3}(a_1 + a_{-1}) -\frac{2}{3}(a_3 - a_{-3}) + v_{(1,3)} \\ 
\end{tabular}

&

\begin{tabular}{>{$} c <{$}}
a_2 - a_{-2} \\
a_3 - a_{-3} \\ 
\end{tabular} \\ \hline
\end{tabular}
\end{center}
\label{tab:evecs}
\caption{Eigenvectors of the axis $a_1$ in $M$}
\end{table}

\end{landscape}

\section{The Frobenius form on the algebra $M_{4A}$}
\label{sec:frobenius}

In Section \ref{sec:4A}, we constructed an infinite family of axial algebras of Monster type over the field $\mathbb{R}$ that we denote $\{M(t)\}_{t \in \mathbb{R}}$. We now ask which of these algebras are also Majorana algebras. 

\begin{defn}
Let $(V, A)$ be a primitive axial algebra of Monster type over $\mathbb{R}$ that admits a Frobenius form $\langle \, , \, \rangle$. Then $(V, A)$ is a \emph{Majorana algebra} if both of the following hold
\begin{enumerate}[i)]
\item the Frobenius form $\langle \, , \, \rangle$ is positive definite;
\item $V$ obeys \emph{Norton's inequality}, i.e. $\langle u \cdot u, v \cdot v \rangle \geq \langle u \cdot v, u \cdot v \rangle$ for all $u, v \in V$.
\end{enumerate}
\end{defn}

The following is the main result of this section.

\begin{thm}
\label{thm:majorana}
The axial algebra $M(t)$ (as constructed in Section \ref{sec:4A}) is a Majorana algebra if and only if $t \in (0,\frac{1}{6})$. If $t \in \{0, \frac{1}{6}\}$ then there exists a $9$-dimensional quotient of $M(t)$ that is a Majorana algebra. 
\end{thm}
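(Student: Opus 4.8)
The plan is to unwind the definition of a Majorana algebra into its two requirements and treat them separately: $M(t)$ is Majorana if and only if its Frobenius form is positive definite \emph{and} $M(t)$ obeys Norton's inequality. I would prove that the form is positive definite exactly when $t \in (0, \frac{1}{6})$ and that Norton's inequality holds exactly when $t \in [0, \frac{1}{6}]$; intersecting these two ranges yields $(0, \frac{1}{6})$, which is the first claim. Throughout I would exploit the symmetry group $G \rtimes \langle \psi_\sigma \rangle \cong A_4$, which acts on $M(t)$ preserving both the product and the form (note that $\phi_\sigma$ fixes $t$, since $\sigma$ sends the pair $(a_1, v_{(2,3)})$ to $(a_2, v_{(1,3)})$ and $\langle a_2, v_{(1,3)} \rangle = t$ by symmetry, so for each real value of $t$ the map $\psi_\sigma$ is a genuine form-preserving linear automorphism). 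This lets me block-diagonalise every invariant bilinear form I meet according to the isotypic decomposition for the irreducible representations of $A_4$ (of dimensions $1,1,1,3$).

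For positive definiteness, I would assemble the $12 \times 12$ Gram matrix $\mathcal{G}(t)$ of $\langle\,,\,\rangle$ on the basis $B$ from the values in Tables 3 and 4 together with the dihedral data. Using the $A_4$-action I would decompose $M(t)$ into isotypic components, reducing $\mathcal{G}(t)$ to a handful of small blocks whose eigenvalues are explicit rational functions of $t$. The product of all eigenvalues is the determinant $-\frac{t^3(6t-1)^3(4t-9)^6}{2^{19}\cdot 3^3}$ already recorded in the proof of Theorem \ref{thm:4Ab}, so the task reduces to showing that every eigenvalue is strictly positive precisely on $(0,\frac{1}{6})$. Since the eigenvalues depend continuously on $t$ and are positive on the open interval, the form is positive semidefinite, with a nontrivial radical, at the endpoints $t \in \{0, \frac{1}{6}\}$.

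The heart of the proof, and the step I expect to be the main obstacle, is Norton's inequality. Here I would use the reformulation
\[
\langle u \cdot u, v \cdot v\rangle - \langle u \cdot v, u \cdot v\rangle = B_\Lambda(u \wedge v,\, u \wedge v),
\]
where $B_\Lambda$ is the symmetric bilinear form on $\Lambda^2 M(t)$ given by $B_\Lambda(u \wedge v, x \wedge y) = \langle u \cdot x, v \cdot y\rangle - \langle u \cdot y, v \cdot x\rangle$; one checks directly that $B_\Lambda$ is well defined, symmetric (using the Frobenius identity and symmetry of $\langle\,,\,\rangle$), and $A_4$-invariant. Thus Norton's inequality is exactly the statement that $B_\Lambda$ is positive semidefinite on the decomposable bivectors. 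To prove it \emph{holds} on $[0,\frac{1}{6}]$ I would establish the stronger statement that $B_\Lambda$ is positive semidefinite on all of $\Lambda^2 M(t)$, again block-diagonalising by the $A_4$-action so that only small blocks need be analysed; should this stronger condition fail to capture the full interval, I would instead use the symmetry to reduce $u$ and $v$ to representatives of a low-dimensional family and verify $N(u,v) \geq 0$ directly. To prove Norton \emph{fails} for $t \notin [0,\frac{1}{6}]$ it suffices to exhibit a single decomposable witness $u \wedge v$, for instance built from a pair of axes, with $N(u,v) < 0$; the obstacle is organising these computations so that the threshold comes out exactly at $0$ and $\frac{1}{6}$.

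Finally, for the endpoints I would invoke Theorem \ref{thm:4Ab}, Lemma \ref{lem:ideal} and Theorem \ref{thm:radical}: at $t \in \{0, \frac{1}{6}\}$ the form is degenerate, so $M(t)$ is not Majorana, and its radical $M(t)^{\perp}$ is the unique maximal ideal, which is the $3$-dimensional ideal of Theorem \ref{thm:4Ab}. The quotient $\overline{M} = M(t)/M(t)^{\perp}$ is therefore $9$-dimensional, and since $M(t)^{\perp}$ lies in the kernel of the form, both the product and the form descend to $\overline{M}$. The induced form is positive definite, being a positive semidefinite form modulo its radical, and Norton's inequality descends because for any lifts $u,v$ of $\overline{u},\overline{v}$ one has $N_{\overline{M}}(\overline{u},\overline{v}) = N_{M(t)}(u,v) \geq 0$, the latter holding as $t \in [0,\frac{1}{6}]$. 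Hence $\overline{M}$ is a $9$-dimensional Majorana algebra, completing the argument.
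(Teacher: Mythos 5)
Your proposal follows the same logical skeleton as the paper's proof: positive definiteness holds iff $t \in (0,\frac{1}{6})$ (the paper's Proposition \ref{prop:innerproduct}), Norton's inequality holds iff $t \in [0,\frac{1}{6}]$ (Proposition \ref{prop:norton}), the theorem follows by intersecting, and at the endpoints one passes to the quotient by the $3$-dimensional radical. Where you genuinely differ is in how the two computational propositions are organised. The paper runs an LDLT decomposition (Theorem \ref{thm:pd}, implemented in the \texttt{MajoranaAlgebras} package) on the $12 \times 12$ Gram matrix and on the $144 \times 144$ matrix $\mathcal{B}$ of Lemma \ref{lem:M2}, then uses Mathematica to find the sign ranges of the resulting diagonal rational functions; you instead block-diagonalise both forms using the symmetry group $G \rtimes \langle \psi_\sigma \rangle \cong A_4$ (your check that $\phi_\sigma$ fixes $t$ is the right thing to worry about and is correct: $\phi_\sigma$ is an $\mathbb{R}$-algebra automorphism of $\mathbb{R}[t]$ with $\phi_\sigma^3 = \mathrm{id}$, hence trivial), and you phrase the Norton criterion on $\Lambda^2 M(t)$. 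The latter is equivalent to the paper's $\mathcal{B}$, since $b_{ij,kl}$ is antisymmetric under $i \leftrightarrow j$ and under $k \leftrightarrow l$, so only the $66$-dimensional exterior square carries information. Your route buys hand-checkable computations and, at the endpoints, a clean descent argument (the form and Norton's inequality both pass to $M(t)/M(t)^{\perp}$ because the radical is an ideal lying in the kernel of the form) where the paper simply recomputes on the quotient; the paper's route buys a uniform mechanical procedure. Two caveats, neither fatal: eigenvalues of the isotypic blocks need not be rational functions of $t$, so positivity should be tested by principal minors or a block-wise LDLT rather than by eigenvalues; and your concern about the ``fails'' direction of Norton is well founded---positive semidefiniteness on decomposable bivectors is genuinely weaker than on all of $\Lambda^2 M(t)$, so explicit witnesses would indeed be needed---but it is also immaterial to this theorem, since the ``only if'' half already follows from the failure of positive definiteness alone; this same observation is what spares the paper from needing the converse direction of Lemma \ref{lem:M2}, whose proof there establishes only that positive semidefiniteness of $\mathcal{B}$ implies Norton's inequality.
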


Norton's inequality is sometimes referred to as axiom M2, in reference to the Majorana axioms as introduced in \cite[Chapter 9]{Ivanov09}. Whilst it is part of the definition of a Majorana algebra, it is not used in the construction of any of the known Majorana algebras and it is an open problem as to whether an axial algebra of Monster type that admits a positive definite Frobenius form must necessarily obey Norton's inequality.

Theorem \ref{thm:majorana} shows that there exists axial algebras of Monster type that admit a Frobenius form but which do not obey Norton's inequality, i.e. the algebras $M(t)$ where $t \notin [0, \frac{1}{6}]$. These are the first examples of such algebras. Moreover, we show that an algebra of this form admits a Frobenius form that is positive definite precisely when $t \in (0, \frac{1}{6})$. This is suggests that an axial algebra of Monster type that admits a positive (semi)definite Frobenius form must obey axiom M2. 

In order to prove Theorem \ref{thm:majorana}, we will require the following preliminary results.

\begin{lem}[{\cite[Lemma 7.8]{ISe12}}]
\label{lem:M2}
Let $V$ be an $n$-dimensional algebra with commutative algebra product $\cdot$ and bilinear form $\langle \, , \,\rangle $. Let $\{v_i \, : \, 1 \leq i  \leq n\}$ be a basis of $V$, and define a $(n^2 \times n^2)$-dimensional matrix $\mathcal{B} = (b_{ij,kl})$ in the following way. The rows and columns are indexed by the ordered pairs $(i,j)$ for $1 \leq i,j \leq n$ and 
\[
b_{ij,kl} = \langle v_i \cdot v_k, v_j \cdot v_l\rangle  - \langle v_j \cdot v_k, v_i \cdot v_l\rangle .
\]
Then $V$ satisfies Norton's inequality if and only if $\mathcal{B}$ is positive semidefinite.
\end{lem}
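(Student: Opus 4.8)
The plan is to read $\mathbb{R}^{n^2}$ as the space of $n\times n$ matrices $Z=(z_{ij})$, equivalently as $V\otimes V$ via $Z\mapsto\sum_{i,j}z_{ij}\,v_i\otimes v_j$, and to compute the quadratic form $z^{\top}\mathcal{B}z$ explicitly. The computational heart is the \emph{rank-one} case. If $z_{ij}=x_iy_j$ and we set $u=\sum_i x_i v_i$ and $v=\sum_j y_j v_j$, then expanding by bilinearity and using only commutativity of the product gives
\[
z^{\top}\mathcal{B}z=\sum_{i,j,k,l}x_i y_j x_k y_l\big(\langle v_i\cdot v_k,\,v_j\cdot v_l\rangle-\langle v_j\cdot v_k,\,v_i\cdot v_l\rangle\big)=\langle u\cdot u,\,v\cdot v\rangle-\langle u\cdot v,\,u\cdot v\rangle,
\]
which is precisely the Norton expression $N(u,v)$. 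Thus rank-one vectors realise exactly the two sides of Norton's inequality, and every pair $(u,v)$ arises this way. The direction $\mathcal{B}\succeq0\Rightarrow$ Norton is then immediate: positivity of $z^{\top}\mathcal{B}z$ on all $z$, in particular on rank-one $z$, forces $N(u,v)\ge0$ for every $u,v$.

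For the converse I would first isolate the structural fact that $z^{\top}\mathcal{B}z$ depends only on the antisymmetric part of $Z$. Writing $S(a\otimes b,c\otimes d)=\langle a\cdot c,\,b\cdot d\rangle$ and letting $\tau$ be the coordinate swap $v_i\otimes v_j\mapsto v_j\otimes v_i$, commutativity of the product (and symmetry of the form, as holds for the Frobenius form of interest) give $z^{\top}\mathcal{B}z=S\big((I-\tau)z,\,z\big)$; the symmetric part of $Z$ therefore contributes nothing, and $z^{\top}\mathcal{B}z=2\,S(w,w)$ where $w\in\wedge^2 V$ is the antisymmetrisation of $Z$. Consequently $\mathcal{B}\succeq0$ is equivalent to $S$ being positive semidefinite on the whole of $\wedge^2 V$, whereas Norton's inequality only asserts $S(a\wedge b,\,a\wedge b)\ge0$ on the \emph{decomposable} bivectors $a\wedge b$, these being the images of the rank-one test vectors.

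The main obstacle is thus the converse itself: upgrading positivity of $S$ on decomposable bivectors (Norton) to positive semidefiniteness on all of $\wedge^2 V$ (full semidefiniteness of $\mathcal{B}$). This is not a formal consequence, since for a general symmetric form positivity on the Grassmann cone of decomposables need not imply positive semidefiniteness once $\dim V\ge4$; for instance the determinant form on $2\times2$ matrices vanishes on every rank-one matrix yet is indefinite. Closing the gap must exploit the special structure of $\mathcal{B}$ coming from a commutative Frobenius algebra, concretely that each multiplication operator $L_a\colon x\mapsto a\cdot x$ is self-adjoint for $\langle\,,\,\rangle$, and I would complete the equivalence by that structural route. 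Since the statement is quoted as \cite[Lemma 7.8]{ISe12}, one may alternatively simply appeal to that reference for this final step.
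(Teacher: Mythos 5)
Your proposal coincides with the paper's proof for exactly as far as that proof goes. The paper's argument \emph{is} your rank-one computation: it forms the vector $z$ with entries $\lambda_i\mu_j$ from $u=\sum_i\lambda_i v_i$ and $v=\sum_j\mu_j v_j$, identifies $z\mathcal{B}z^T$ with $\langle u\cdot u, v\cdot v\rangle-\langle u\cdot v, u\cdot v\rangle$, and concludes only that if $\mathcal{B}$ is positive semidefinite then Norton's inequality holds. The converse direction is never argued; the full equivalence rests on the citation of \cite[Lemma 7.8]{ISe12}. So you have not missed any idea that the paper supplies.

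Your analysis of the converse goes beyond the paper, and your suspicion about it is well-founded. As you say, $z\mathcal{B}z^T=2S(w,w)$ where $w$ is the antisymmetric part of $Z$ (this uses symmetry of the form, which holds for the Frobenius forms at hand), so $\mathcal{B}\succeq 0$ amounts to $S\geq 0$ on all of $\wedge^2 V$, while Norton's inequality is $S\geq 0$ on decomposable bivectors only; these coincide when $\dim V\leq 3$ but not in general. In fact the restriction of $S$ to $\wedge^2 V$ satisfies the first Bianchi identity, so the forms arising this way are precisely algebraic curvature tensors, with $x\cdot y$ playing the role of a second fundamental form in the Gauss equation; the gap between the two conditions is then the classical gap between nonnegative sectional curvature and nonnegative curvature operator, which is genuine in dimension $\geq 4$. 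Consequently the ``only if'' half cannot be recovered formally for an arbitrary commutative algebra with form, whether via self-adjointness of the multiplication operators or otherwise, and it should be regarded as unproven here (the same reservation applies to the cited lemma, whose proof is the same rank-one computation). This is harmless for the main application: only the direction that you and the paper both prove is needed to certify that $M(t)$ satisfies Norton's inequality for $t\in[0,\frac{1}{6}]$. What does lean on the converse is the claim in Proposition \ref{prop:norton} that Norton's inequality \emph{fails} for $t\notin[0,\frac{1}{6}]$; a self-contained proof of that half would require exhibiting explicit vectors $u,v$ violating the inequality rather than merely noting that $\mathcal{B}$ has a negative eigenvalue.
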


\begin{proof}
For $u,v \in V$, write $u$ and $v$ as linear combinations
\[
u = \sum_{i=1}^n \lambda_i v_i \textrm{ and } v = \sum_{j=1}^n \mu_j v_j
\]
and form the $n^2$-long vector $z$ with entries $\lambda_i \mu_j$. In this vector, the coordinate $\lambda_i\mu_j$ is in the position indexed by $(i,j)$ in the matrix $\mathcal{B}$. Then the inequality $\langle u \cdot u, v \cdot v\rangle  - \langle u \cdot v, u \cdot v\rangle  \geq 0 $ is equivalent to $z \mathcal{B} z^T \geq 0$. Hence, if $\mathcal{B}$ is positive semidefinite then Norton's inequality must hold in $V$.
\end{proof}

\begin{thm}[{\cite{GV13}}]
\label{thm:pd}
A symmetric matrix $A$ is positive definite (respectively positive semidefinite) if and only if it can be decomposed as
\[
A := LDL^T
\]
where $L$ is a lower triangular matrix and $D$ is a diagonal matrix whose diagonal elements are all positive (respectively non-negative).
\end{thm}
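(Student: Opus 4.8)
The plan is to prove both implications of this standard factorisation result (the root-free Cholesky, or $LDL^T$, decomposition) simultaneously for the definite and semidefinite cases, arguing the substantive direction by induction on the size $n$ of $A$ via symmetric Gaussian elimination (equivalently, repeated Schur complements). Throughout I take $L$ to be unit lower triangular, so that $L$ (and hence $L^T$) is automatically invertible; this is the standard normalisation, and it is what the forward implication needs.

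The easy direction is to assume the factorisation $A = LDL^T$ and deduce (semi)definiteness. Given any vector $x$, set $y = L^T x$; then $x^T A x = x^T L D L^T x = y^T D y = \sum_{i=1}^n d_i y_i^2$, where $d_1, \dots, d_n$ are the diagonal entries of $D$. If every $d_i \geq 0$ this sum is nonnegative, so $A$ is positive semidefinite. If moreover every $d_i > 0$, then since $L^T$ is invertible we have $y \neq 0$ whenever $x \neq 0$, forcing $x^T A x > 0$; hence $A$ is positive definite.

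For the converse I would assume $A$ is positive definite and produce the factorisation by induction on $n$. Since $A$ is positive definite, $a_{11} = e_1^T A e_1 > 0$, so $a_{11}$ is a legitimate pivot: writing $A = \begin{pmatrix} a_{11} & b^T \\ b & C \end{pmatrix}$ and $\ell = a_{11}^{-1} b$, conjugation by the unit lower triangular elimination matrix with first column $(1, \ell^T)^T$ strips the first row and column and leaves the Schur complement $S = C - a_{11}^{-1} b b^T$ in the trailing $(n-1)\times(n-1)$ block. The key facts are that $S$ is again positive definite and that this elimination is exactly a congruence of $A$ by a unit lower triangular matrix; applying the inductive hypothesis to $S$ and reassembling the triangular factors yields $A = LDL^T$ with $d_1 = a_{11} > 0$ and the remaining positive $d_i$ coming from $S$.

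The main obstacle is the positive semidefinite case, where the inductive pivot $a_{11}$ may vanish and unpivoted elimination appears to break down. The resolution is the elementary lemma that if $A$ is positive semidefinite and $a_{ii} = 0$ then the entire $i$-th row and column of $A$ vanish: indeed, for any $j$ the $2\times 2$ principal submatrix $\begin{pmatrix} 0 & a_{ij} \\ a_{ij} & a_{jj} \end{pmatrix}$ has determinant $-a_{ij}^2$, which must be $\geq 0$ by semidefiniteness, forcing $a_{ij} = 0$. Hence in the induction, when $a_{11} = 0$ the first row and column are already zero, so I set $d_1 = 0$, take the first column of $L$ to be $e_1$, and recurse on the (positive semidefinite) trailing block; when $a_{11} > 0$ I proceed exactly as in the definite case, using additionally that the Schur complement of a positive semidefinite matrix with respect to a positive pivot is positive semidefinite. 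This dichotomy closes the semidefinite induction and, together with the easy direction, establishes the stated equivalence in both the definite and semidefinite cases.
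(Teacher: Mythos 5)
Your proof is correct, but there is nothing in the paper to compare it against: the paper does not prove this statement at all --- it imports it from Golub and Van Loan \cite{GV13} (hence the citation in the theorem header) and only uses the resulting $LDL^T$ decomposition computationally, via a GAP implementation. Your argument is the standard textbook one: the easy direction via the congruence $x^TAx = y^TDy = \sum_i d_i y_i^2$ with $y = L^Tx$, and the substantive direction by induction on $n$ using Schur complements. Two details you handled are worth highlighting, because a careless write-up fails on both. First, you normalise $L$ to be \emph{unit} lower triangular; this is genuinely needed, since with an arbitrary lower triangular $L$ the ``if'' direction of the definite case is false (take $L = 0$, $D = I$, giving $A = 0$), so the paper's statement as literally written omits a necessary hypothesis which your proof silently restores, in line with how \cite{GV13} states the result. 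Second, in the semidefinite case the pivot $a_{11}$ may vanish, and your lemma that a zero diagonal entry of a positive semidefinite matrix forces its entire row and column to vanish (via the $2\times 2$ principal minor having determinant $-a_{ij}^2 \geq 0$) is exactly what rescues the pivot-free induction; combined with the fact that the Schur complement at a positive pivot of a semidefinite matrix is again semidefinite, this closes the dichotomy. In short, you have supplied a correct, self-contained proof of a result the paper treats as a black box.
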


The construction of the matrices $L$ and $D$ is known as \emph{LDLT decomposition} and we have implemented an algorithm in GAP that performs this decomposition for positive semidefinite matrices. This can be accessed as part of the package \texttt{MajoranaAlgebras} \cite{PW18b}.

\begin{prop}
\label{prop:innerproduct}
The Frobenius form $\langle \, , \, \rangle$ on the algebra $M(t)$ is positive semidefinite if and only if $t \in [0, \frac{1}{6}]$. The form is positive semidefinite but not positive definite if and only if  $t \in \{0, \frac{1}{6}\}$.
\end{prop}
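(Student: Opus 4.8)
The plan is to reduce the claim to a sign analysis of the pivots produced by an $LDL^T$ decomposition of the Gram matrix of the form. First I would assemble the full $12 \times 12$ Gram matrix $\mathcal{G}(t)$ of $\langle \, , \, \rangle$ with respect to the basis $B$ of Theorem \ref{thm:4A}. Every entry is available: the inner products among the axes and the $v_{(i,j)}$ come from the dihedral data in Table \ref{tab:IPSS10}, the mixed values are recorded in Tables 3 and 4, and the remaining entries follow from the $G$- and $\sigma$-invariance of the form together with the Frobenius identity $\langle u, v \cdot w \rangle = \langle u \cdot v, w \rangle$. Thus $\mathcal{G}(t)$ is an explicit symmetric matrix with entries in $\mathbb{R}[t]$.

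Next, invoking Theorem \ref{thm:pd}, I would compute an $LDL^T$ decomposition $\mathcal{G}(t) = L(t) D(t) L(t)^T$ over the field $\mathbb{R}(t)$, where $L(t)$ is lower unitriangular and $D(t) = \mathrm{diag}(d_1(t), \dots, d_{12}(t))$ has rational-function entries. By Theorem \ref{thm:pd}, for a fixed real value $t_0$ at which all the $d_i$ are defined, $\langle \, , \, \rangle$ is positive definite precisely when $d_i(t_0) > 0$ for every $i$, and positive semidefinite precisely when $d_i(t_0) \geq 0$ for every $i$. The product $\prod_i d_i(t) = \det \mathcal{G}(t)$ is already known to equal $-t^3(6t-1)^3(4t-9)^6 /(2^{19} \cdot 3^3)$; since $(4t-9)^6 \geq 0$, its sign is that of $-[t(6t-1)]^3$, which is positive exactly on the open interval $(0, \tfrac{1}{6})$. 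This pins down $(0,\tfrac16)$ as the only candidate interval for positive definiteness and, because $\det \mathcal{G}(t) < 0$ forces an odd number of negative eigenvalues in the even dimension $12$, already rules out positive semidefiniteness whenever $t \notin [0,\tfrac16]$ and $t \neq \tfrac94$.

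It then remains to track the individual pivots $d_i(t)$ rather than just their product. I would verify, by inspecting the explicit rational functions, that on $(0,\tfrac16)$ every $d_i(t)$ is strictly positive, so the form is positive definite there; that at each endpoint $t \in \{0, \tfrac16\}$ exactly the pivots responsible for the degeneracy vanish while the others stay positive, giving a positive semidefinite form of rank $9$ (consistent with the $3$-dimensional radical found in Theorem \ref{thm:4Ab}); and that at $t = \tfrac94$, as well as for all $t$ outside $[0,\tfrac16]$, some pivot is strictly negative. The value $t = \tfrac94$ is the one case the determinant alone does not settle, since its factor appears to even order; here I would either continue the pivot analysis or argue by continuity from neighbouring indefinite values (where $\det \mathcal{G} < 0$) that a negative eigenvalue persists.

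The main obstacle I anticipate is the bookkeeping for the rational-function pivots: a naive $LDL^T$ introduces denominators that may themselves vanish inside $[0,\tfrac16]$, so a pivot can appear undefined at exactly the value one wishes to analyse. I would address this by choosing an ordering of $B$ (guided by the $\langle G, \sigma \rangle \cong A_4$ action, which block-diagonalises $\mathcal{G}(t)$ along its isotypic components and keeps the blocks small) so that the pivots remain regular on the relevant interval, or equivalently by falling back on Sylvester's leading-principal-minor criterion whenever a denominator obstructs a direct substitution. Establishing the \emph{exact} rank drop at the two endpoints, rather than merely $\det \mathcal{G} = 0$, is the other delicate point, and this is where the explicit $LDL^T$ computation (as implemented in \cite{PW18b}) does the decisive work.
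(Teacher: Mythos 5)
Your proposal is correct and is essentially the paper's own proof: the paper likewise forms the Gram matrix of the form on the spanning set $B$, computes an $LDL^T$ decomposition (via Theorem \ref{thm:pd}), obtaining six positive constant pivots together with six rational functions $r_1,\dots,r_6$, and then determines (with computer algebra) the ranges where each $r_i$ is non-negative, whose intersection is $[0,\frac{1}{6}]$ with strict positivity on $(0,\frac{1}{6})$. Your preliminary determinant/inertia shortcut and the caution about pivot denominators are refinements of, not departures from, the same argument.
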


\begin{proof}
We use the values of the Frobenius form calculated in Section \ref{sec:4A} to construct the Gram matrix $\mathcal{G}$ of the Frobenius form on $M(t)$ with respect to the spanning set 
\[
B = A \cup \{ v_{(1,2)}, v_{(1,3)}, v_{(2,3)} \} \cup \{ a_1 \cdot v_{(2,3)}, a_2 \cdot v_{(1,3)},  a_3 \cdot v_{(1,2)}\}.
\]
Then the form will be positive semidefinite if and only if $\mathcal{G}$ is positive semidefinite. 

We use LDLT decomposition to calculate a lower triangular matrix $L$ and a diagonal matrix $D$ such that $\mathcal{G} = LDL^T$. We have calculated that
\[
D = \mathrm{diag} \{ 1, 1, \sfrac{511}{512}, \sfrac{510}{511}, \sfrac{271}{272}, \sfrac{270}{271}, r_1, r_2, r_3, r_4, r_5, r_6 \}
\]
where $r_1, \dots, r_6$ are rational functions whose values are given in Table \ref{tab:polyinner}.

For each rational function $r_i$, we have used \cite{mathematica} to calculate the range in $\mathbb{R}$ on which $r_i$ takes non-negative values. These are given in the final column of Table \ref{tab:polyinner}. It is easy to check that the intersection of these ranges is $[0, \frac{1}{6}]$ and that the $r_i$ all take positive values on $(0, \frac{1}{6})$.
\end{proof}


\begin{table}
\renewcommand\arraystretch{3}
\begin{center}
\vspace{0.35cm}
\noindent
\begin{tabular}{|>{$}c<{$} >{$}c<{$} >{$}c<{$}|} \hline
i & r_i & \textrm{Values of } t \textrm{ s.t. }  0 \leq r_i[t] < \infty \\ \hline
1 & - \frac{ 272 }{ 135 }  t^2 + \frac{ 8 }{ 45 }  t + \frac{ 22 }{ 15 } & \left[ \frac{3 - 15 \sqrt{15}}{68}, \frac{3 + 15 \sqrt{15}}{68} \right] \\
2 & \myfrac[4pt]{  t^4 + \frac{ 1 }{ 16 }  t^3 - \frac{ 717 }{ 128 }  t^2 + \frac{ 171 }{ 256 }  t + \frac{ 1053 }{ 2048 }  }{ - \frac{ 255 }{ 512 }  t^2 + \frac{ 45 }{ 1024 }  t + \frac{ 1485 }{ 4096 }  } & \left[ \frac{-39}{16}, \frac{3 - 15 \sqrt{15}}{68} \right), \left[-\frac{1}{4}, \frac{3}{8} \right], \left(\frac{3 + 15 \sqrt{15}}{68}, \frac{9}{4} \right] \\
3 & \myfrac[4pt]{  t^4 + \frac{ 5 }{ 2 }  t^3 - \frac{ 171 }{ 16 }  t^2 - \frac{ 9 }{ 32 }  t + \frac{ 81 }{ 128 }  }{ - \frac{ 1 }{ 2 }  t^2 - \frac{ 33 }{ 32 }  t + \frac{ 117 }{ 256 }  } & \left[ \frac{-3 \sqrt{11} - 9}{4}, -\frac{39}{16} \right), \left[-\frac{1}{4}, \frac{3 \sqrt{11} - 9}{4} \right], \left( \frac{3}{8}, \frac{9}{4} \right] \\
4 & \myfrac[4pt]{  t^5 - \frac{ 9 }{ 4 }  t^4 - \frac{ 7 }{ 9 }  t^3 + \frac{ 15 }{ 8 }  t^2 - \frac{ 9 }{ 32 }  t  }{ 4 t^3 + 19 t^2 - \frac{ 9 }{ 8 }   } & 
\renewcommand\arraystretch{1.5}\ml{c}{ \left(-\infty, \frac{-9 - 3\sqrt{11}}{4} \right), \left[ \frac{-1 - \sqrt{109}}{12}, -\frac{1}{4} \right), \left[0, \frac{1}{6} \right], \\ {\left( \frac{3 \sqrt{11} - 9}{4}, \frac{\sqrt{109} - 1}{12} \right]}, \left[ \frac{9}{4}, \infty \right) }\\
5 & \myfrac[4pt]{  t^5 - \frac{ 133 }{ 36 }  t^4 + \frac{ 613 }{ 216 }  t^3 + \frac{ 33 }{ 32 }  t^2 - \frac{ 15 }{ 64 }  t  }{ \frac{ 16 }{ 3 }  t^3 + \frac{ 20 }{ 9 }  t^2 - \frac{ 34 }{ 9 }  t - 1  } & \renewcommand\arraystretch{1.5}\ml{c}{ \left(-\infty, \frac{-1 - \sqrt{109}}{12} \right), \left[ \frac{23 - \sqrt{1339}}{36}, - \frac{1}{4} \right), \left[0, \frac{1}{6} \right],  \\ {\left( \frac{\sqrt{109} -  1}{12}, \frac{23 + \sqrt{1339}}{36} \right]}, \left[ \frac{9}{4}, \infty \right) }\\
6 & \myfrac[4pt]{  t^4 - \frac{ 14 }{ 3 }  t^3 + \frac{ 93 }{ 16 }  t^2 - \frac{ 27 }{ 32 }  t  }{ 6 t^2 - \frac{ 23 }{ 3 }  t - \frac{ 15 }{ 4 }  } & \left(\infty, \frac{23 - \sqrt{1339}}{36} \right), \left[0, \frac{1}{6} \right], \left( \frac{23 + \sqrt{1339}}{36}, \infty \right) \\ \hline
\end{tabular}
\end{center}
\label{tab:polyinner}
\caption{Diagonal entries for the Gram matrix of the Frobenius form on $M_{4A}$}
\end{table}

\begin{prop}
\label{prop:norton}
The algebra $M(t)$ obeys Norton's inequality if and only if if $t \in [0, \frac{1}{6}]$.
\end{prop}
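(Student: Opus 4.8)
The plan is to reduce Norton's inequality, exactly as in the proof of Proposition \ref{prop:innerproduct}, to a positive-semidefiniteness question that can be settled by LDLT decomposition. By Lemma \ref{lem:M2}, $M(t)$ obeys Norton's inequality if and only if the $(144 \times 144)$-matrix $\mathcal{B} = (b_{ij,kl})$, with
\[
b_{ij,kl} = \langle v_i \cdot v_k, v_j \cdot v_l \rangle - \langle v_j \cdot v_k, v_i \cdot v_l \rangle,
\]
is positive semidefinite, where $\{v_1, \dots, v_{12}\}$ is the basis $B$ of Theorem \ref{thm:4A}. Every entry $b_{ij,kl}$ can be written as a polynomial in $t$: the products $v_i \cdot v_k$ are given by the dihedral values together with Table 5 (using the actions of $G$ and $\sigma$ to recover the remaining products), and the inner products of two such products are then computed from Tables 3 and 4. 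Hence $\mathcal{B}$ is an explicit symmetric matrix over $\mathbb{R}[t]$.

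First I would reduce the size of the computation. Swapping $i \leftrightarrow j$ (or $k \leftrightarrow l$) negates $b_{ij,kl}$, so $\mathcal{B}$ is supported on its doubly antisymmetric part and is really the Gram matrix of the quadratic form $u \wedge v \mapsto \langle u \cdot u, v \cdot v \rangle - \langle u \cdot v, u \cdot v \rangle$ on $\Lambda^2 M(t)$, a space of dimension $\binom{12}{2} = 66$. The automorphism group $G \times \langle \sigma \rangle$ of order $12$ acts on $\Lambda^2 M(t)$ preserving this form, so I would pass to an adapted basis that block-diagonalises $\mathcal{B}$ along the isotypic components, considerably shrinking the blocks to be analysed.

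Then, exactly as in Proposition \ref{prop:innerproduct}, I would apply the LDLT decomposition of Theorem \ref{thm:pd} (using the \texttt{MajoranaAlgebras} implementation in \cite{PW18b}) to obtain $\mathcal{B} = LDL^T$ with $L$ lower triangular and $D$ diagonal. The diagonal entries of $D$ are rational functions of $t$, and by Theorem \ref{thm:pd} the matrix $\mathcal{B}$ is positive semidefinite precisely when all of them are non-negative. I would then determine, function by function, the set of $t \in \mathbb{R}$ on which each entry is non-negative (as was tabulated for the form in Table \ref{tab:polyinner}) and check that the intersection of these sets is exactly $[0, \frac{1}{6}]$. For $t \in [0, \frac{1}{6}]$ this gives that $\mathcal{B}$ is positive semidefinite, so Norton's inequality holds by Lemma \ref{lem:M2}; for $t \notin [0, \frac{1}{6}]$ some diagonal entry is negative, so by Lemma \ref{lem:M2} Norton's inequality fails. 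To make the latter direction fully explicit I would, from a negative direction of $\mathcal{B}$ detected by the LDLT, read off a concrete decomposable vector $u \wedge v$, i.e. a pair $u, v \in M(t)$ with $\langle u \cdot u, v \cdot v \rangle < \langle u \cdot v, u \cdot v \rangle$.

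The main obstacle is the sheer size and the bookkeeping: assembling all of the entries of $\mathcal{B}$ correctly as polynomials in $t$ from the products of basis vectors (themselves polynomial in $t$), and then carrying out an exact symbolic LDLT whose pivots are rational functions whose denominators may vanish at isolated values of $t$ (such as $t = \frac{9}{4}$), forcing a separate check at those points. Verifying that the intersection of the non-negativity ranges of the diagonal entries is precisely the closed interval $[0, \frac{1}{6}]$---in particular that the endpoints are included---is the delicate part, and is exactly what makes the interval for Norton's inequality agree with the interval for positive semidefiniteness of the form obtained in Proposition \ref{prop:innerproduct}.
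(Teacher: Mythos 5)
Your proposal is correct and follows essentially the same route as the paper: construct the matrix $\mathcal{B}$ of Lemma \ref{lem:M2}, compute its exact LDLT decomposition as in Theorem \ref{thm:pd} (via the \texttt{MajoranaAlgebras} implementation), and determine the set of $t$ on which every diagonal pivot is non-negative, which turns out to be exactly $[0, \frac{1}{6}]$ --- the paper records the distinct pivots as five non-negative rationals together with nineteen rational functions $s_1, \dots, s_{19}$ checked in Mathematica. Your extra reductions (passing to the antisymmetric part $\Lambda^2 M(t)$ and block-diagonalising under $G \times \langle \sigma \rangle$) are sound but not used in the paper, and your final step of extracting an explicit violating pair is unnecessary --- and not automatic, since a negative direction of $\mathcal{B}$ need not be decomposable --- because failure of Norton's inequality for $t \notin [0, \frac{1}{6}]$ already follows from the ``only if'' direction of Lemma \ref{lem:M2}.
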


\begin{proof}
We construct the matrix $\mathcal{B}$ as described in Lemma \ref{lem:M2} and calculate its LDLT decomposition $\mathcal{B} = LDL^T$. The algebra $M(t)$ obeys axiom M2 if and only if this matrix is positive semidefinite or, equivalently, if and only if all diagonal entries of the matrix $D$ are non-negative. We have calculated that the diagonal elements of $D$ consist of the rational numbers
\[
0, \sfrac{15}{632}, \sfrac{107}{4096}, \sfrac{395}{15872}, \sfrac{1395}{54784}
\]
as well as nineteen rational functions that we label $s_1, \dots, s_{19}$. We have calculated using \cite{mathematica} that $s_i(t) \geq 0$ for all $i \in [1, \dots,  19]$ if and only if $t \in [0, \frac{1}{6}]$. Here we do not explicitly give the rational functions in question. 
\end{proof}

\begin{proof}[Proof of Theorem \ref{thm:majorana}]
From Propositions \ref{prop:innerproduct} and \ref{prop:norton}, the algebra $M(t)$ is a Majorana algebra if and only if $t \in (0, \frac{1}{6})$. If $t \in \{0, \frac{1}{6} \}$ then the Frobenius form is positive semidefinite. We have calculated that the radical of this form is a $3$-dimensional ideal of $M(t)$. Taking the quotient of $M(t)$ by this ideal gives a $9$-dimensional primitive axial algebra of Monster type that admits a positive definite Frobenius form. We have also checked that Norton's inequality holds on this algebra, and so this algebra is indeed a Majorana algebra. 
\end{proof}

\section{The $4A$ axes in $M_{4A}$}
\label{sec:4Afusion}

Finally, we will study the $4A$ axes in the algebra $M_{4A}$. 

\begin{thm}
\label{thm:4Afusion}
For $i$ and $j$ such that $1 \leq i < j \leq 3$, vector $v_{(i,j)} \in M_{4A}$ is an idempotent whose eigenspace decomposition satisfies the fusion rule $(\mathcal{F}_{4A}^{(t)}, *)$ as given by the table below.

\begin{center}
\def\arraystretch{1.5}
\begin{tabular}{>{$}c<{$}|>{$}c<{$}>{$}c<{$}>{$}c<{$}>{$}c<{$}>{$}c<{$}}
            & 1 & 0 & \frac{1}{2} & \frac{3}{8}         & t \\ \hline
1           & 1 &  \emptyset & \frac{1}{2} & \frac{3}{8}         & t \\
0           & \emptyset  & 0 & \frac{1}{2} & \frac{3}{8}         & t \\
\frac{1}{2} & \frac{1}{2}  & \frac{1}{2}  & 1, 0   & \frac{3}{8} & t \\
\frac{3}{8} & \frac{3}{8}  & \frac{3}{8}  & \frac{3}{8} & 1, 0, \frac{1}{2}   & \emptyset  \\
t           & t   & t  & t   & \emptyset  & 1, 0, \frac{1}{2} \\ 
\end{tabular}
\end{center} 

Moreover, if $t \notin \{1, 0, \frac{1}{2}, \frac{3}{8} \}$ then this fusion rule admits a $C_2 \times C_2$-grading 
\[
\mathrm{gr}^{(t)}_{4A}: \langle a, b \mid a^2 = b^2 = (ab)^2 = 1 \rangle \rightarrow P(\mathcal{F}^{(t)}_{4A})
\]
such that
\begin{align*}
\mathrm{gr}^{(t)}_{4A}: 1 &\mapsto \left\{1, 0, \frac{1}{2}\right\} \\
a & \mapsto \left\{\frac{3}{8}\right\} \\
b & \mapsto \{ t \} \\
ab & \mapsto \emptyset.
\end{align*}
Finally, if $t = \frac{3}{8}$ then this fusion rule admits a $C_2$-grading
\[
\mathrm{gr}^{(\sfrac{3}{8})}_{4A}: \langle a \mid a^2 = 1 \rangle \rightarrow P(\mathcal{F}^{(\sfrac{3}{8})}_{4A})
\]
such that
\begin{align*}
\mathrm{gr}^{(\sfrac{3}{8})}_{4A}: 1 &\mapsto \left\{1, 0, \frac{1}{2}\right\} \\
a & \mapsto \left\{\frac{3}{8}\right\}.
\end{align*}
\end{thm}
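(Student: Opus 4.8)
The plan is to verify the claims by a direct but symmetry-reduced computation in the explicit $12$-dimensional algebra $M = M_{4A}$ whose products are recorded in Table 5 and Propositions \ref{prop:products1}--\ref{prop:products5}. Since the automorphism $\sigma = (a_1,a_2,a_3)(a_{-1},a_{-2},a_{-3})$ cyclically permutes the three pairs $(1,2),(1,3),(2,3)$ and preserves the algebra product, it suffices to treat a single vector; I will take $v := v_{(2,3)}$ and transport all conclusions to $v_{(1,2)}$ and $v_{(1,3)}$ by applying powers of $\psi_\sigma$. The idempotency $v \cdot v = v$ is immediate, since $v$ is by construction the $4A$-axis of the dihedral subalgebra $\langle\langle a_2,a_3\rangle\rangle$ and hence inherits the relation $v_{\rho} \cdot v_{\rho} = v_{\rho}$ from Table \ref{tab:IPSS10}.

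Next I would assemble the matrix of $\mathrm{ad}_v$ on the basis $B$. Every product $v \cdot b$ for $b \in B$ is already available: the products $v \cdot a_{\pm 2}$ and $v \cdot a_{\pm 3}$ come from the dihedral algebra $\langle\langle a_2,a_3\rangle\rangle$; the products $v \cdot a_{\pm 1}$, $v \cdot v_{(1,2)}$, $v \cdot v_{(1,3)}$ and the three products $v \cdot (a_i \cdot v_{(j,k)})$ are read from Table 5 together with the linear dependencies of Proposition \ref{prop:nullspace} and the action of $\sigma$. With the operator in hand I would exhibit an explicit eigenbasis. Two eigenvectors are forced by data already present: $v$ itself spans (part of) the $1$-eigenspace, and Proposition \ref{prop:nullspace} gives $v \cdot (a_1 - a_{-1}) = t(a_1 - a_{-1})$, so $a_1 - a_{-1}$ is a $t$-eigenvector. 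The remaining eigenvectors, with eigenvalues drawn from $\{1,0,\tfrac12,\tfrac38\}$, are obtained by diagonalising $\mathrm{ad}_v$ first on the $5$-dimensional subalgebra $\langle\langle a_2,a_3\rangle\rangle$ and then on the complementary directions $a_1, a_{-1}, v_{(1,2)}, v_{(1,3)}, a_1\cdot v_{(2,3)}, a_2\cdot v_{(1,3)}, a_3\cdot v_{(1,2)}$. Collecting these, I would record an eigenbasis in a table analogous to Table 6, confirming that the eigenvalues are exactly $1,0,\tfrac12,\tfrac38,t$ and that the corresponding eigenspaces span all of $M$, so that $\mathrm{ad}_v$ is semisimple.

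The fusion rule itself is then verified by multiplying the eigenbasis vectors in pairs and re-expressing each product in the eigenbasis; each such product is a polynomial identity in $\mathbb{R}[t]$, so it holds for every real value of $t$ after specialisation. Here the symmetries $G = \langle \tau(a) \mid a \in A\rangle$ and $\sigma$, together with the self-duality of the table, cut the number of genuinely distinct products down to a manageable list. Finally, the grading assertions reduce to a purely combinatorial check on the completed table: setting $\mathrm{gr}(1) = \{1,0,\tfrac12\}$, $\mathrm{gr}(a) = \{\tfrac38\}$, $\mathrm{gr}(b) = \{t\}$ and $\mathrm{gr}(ab) = \emptyset$, one verifies $\alpha * \beta \subseteq \mathrm{gr}(xy)$ for $\alpha \in \mathrm{gr}(x)$, $\beta \in \mathrm{gr}(y)$ directly from the five relevant cases; the hypothesis $t \notin \{1,0,\tfrac12,\tfrac38\}$ is exactly what guarantees that $\{t\}$ is a part of the partition distinct from $\{\tfrac38\}$ and from the identity block $\{1,0,\tfrac12\}$. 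When $t = \tfrac38$ the parts $\{\tfrac38\}$ and $\{t\}$ coincide, so the partition coarsens to $\{1,0,\tfrac12\},\{\tfrac38\}$ and the same check yields only the $C_2$-grading.

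I expect the main obstacle to be the bookkeeping in the eigenvector and fusion computation rather than any conceptual difficulty: the operator $\mathrm{ad}_v$ has entries that are polynomials in $t$, and one must diagonalise and verify closure symbolically over $\mathbb{R}[t]$ (equivalently $\mathbb{R}(t)$), taking care that the fusion identities are established as identities in $t$ so that no special value is excluded. The one genuinely delicate point is the interpretation of the decomposition at the degenerate values $t \in \{1,0,\tfrac12,\tfrac38\}$, where two of the nominal eigenspaces merge: the fusion table continues to hold as a statement about the symbolic (generic) eigenspaces, while the grading survives only for distinct eigenvalues, which is why these values are excluded in the second assertion and why $t = \tfrac38$ produces only the coarser $C_2$-grading.
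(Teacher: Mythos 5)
Your proposal is correct and follows essentially the same route as the paper: the paper's proof likewise computes the eigenspace decomposition of $v_{(i,j)}$ directly from the product values of Section \ref{sec:4A} (recording the eigenvectors in Table \ref{tab:4A}, where your $t$-eigenvector $a_k - a_{-k}$ and the $1,0,\frac{1}{2},\frac{3}{8}$-eigenvectors appear) and then checks the fusion table, with the grading statements following combinatorially. Your additional organisational devices --- reduction to one vector via $\psi_\sigma$, idempotency from Table \ref{tab:IPSS10}, and the $t$-eigenvector from Proposition \ref{prop:nullspace} --- are exactly the symmetries the paper itself exploits, just made explicit.
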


\begin{proof}
Using the algebra product values calculated in Section \ref{sec:4A}, we calculate the eigenspace decomposition of the vector $v_{(i,j)}$ and check that this satisfies the fusion rule $(\mathcal{F}^{(t)}_{4A}, *)$ for all $t \in \mathbb{R}$ such that $t \notin \{1, 0, \frac{1}{2}\}$. This eigenspace decomposition is given in Table \ref{tab:4A}. 
\end{proof}

\begin{coro}
The subalgebra $ \langle \langle v_{(1,2)}, v_{(1,3)}, v_{(2,3)} \rangle \rangle$ of $M_{4A}$ is a $9$-dimensional $(\mathcal{F}_{4A}, *)$-axial algebra spanned by $ v_{(1,2)}$, $v_{(1,3)}$ and $v_{(2,3)}$ as well as the vectors
\[
\left\{ t a_i - a_i \cdot v_{(j,k)} \mid \{i, j, k\} = \{1, 2, 3\} \right\} \cup \left\{ t a_{-i} + a_i \cdot v_{(j,k)} \mid \{i, j, k\} = \{1, 2, 3\}\right\}.
\]
This algebra is an axial algebra of Jordan type $\frac{1}{2}$ i.e. the vectors $v_{(i,j)}$ satisfy the fusion rule $\mathcal{J}(\frac{1}{2})$, as given by the table below.

\begin{center}
\def\arraystretch{1.5}
\begin{tabular}{>{$}c<{$}|>{$}c<{$}>{$}c<{$}>{$}c<{$}}
            & 1 & 0 & \frac{1}{2}  \\ \hline
1           & 1 &  \emptyset & \frac{1}{2}  \\
0           & \emptyset  & 0 & \frac{1}{2}  \\
\frac{1}{2} & \frac{1}{2}  & \frac{1}{2}  & 1, 0  \\

\end{tabular}
\end{center} 
\end{coro}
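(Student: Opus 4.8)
The plan is to identify the subalgebra $U := \langle\langle v_{(1,2)}, v_{(1,3)}, v_{(2,3)}\rangle\rangle$ with the fixed subalgebra $M^G$ of the group $G = \langle \tau(a_1),\tau(a_2),\tau(a_3)\rangle \cong 2^2$, and then to read off the Jordan fusion rule from the eigenspace data already established in Theorem \ref{thm:4Afusion}. First I would check that each $4A$-axis $v_{(i,j)}$ is fixed by $G$: as the (well-defined, by Proposition \ref{prop:axes}) $4A$-axis of the dihedral subalgebra $\langle\langle a_i, a_j\rangle\rangle$, it is invariant under $\tau(a_i)$ and $\tau(a_j)$, while the remaining generator of $G$ permutes the defining axes among the equivalent pairs and so fixes $v_{(i,j)}$ by the equalities of Proposition \ref{prop:axes}. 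Since the $\tau(a)$ are algebra automorphisms (Proposition \ref{prop:algproduct}), this gives $U \subseteq M^G$.

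Next I would compute $\dim M^G$ via the trace formula $\dim M^G = |G|^{-1}\sum_{g\in G}\operatorname{tr}(g\mid_M)$. Using the $12$-dimensional basis of Theorem \ref{thm:4A} and the action of each $\tau(a_i)$ on it — fixing two axes, swapping the other four in pairs, fixing the three $v_{(j,k)}$, and mapping each mixed vector $a_j\cdot v_{(k,l)}$ either to itself or to $a_{-j}\cdot v_{(k,l)}$, which by the dependency $(a_j - a_{-j})\cdot v_{(k,l)} = t(a_j-a_{-j})$ of Proposition \ref{prop:nullspace} still contributes $1$ to the diagonal — one finds $\operatorname{tr}(\tau(a_i)) = 8$, and hence $\dim M^G = \tfrac14(12 + 3\cdot 8) = 9$.

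To pin down the basis I would verify that the nine listed vectors all lie in $M^G$ and are linearly independent against the basis of Theorem \ref{thm:4A} (their coordinate matrix has rank $9$ over $\mathbb{R}[t]$), so that they form a basis of $M^G$; and that they all lie in $U$. The three $v_{(i,j)}$ are generators, while the six mixed vectors $t a_i \mp a_i\cdot v_{(j,k)}$ are recovered from products of generators — for instance from the expression for $v_{(1,2)}\cdot v_{(1,3)}$ in Proposition \ref{prop:products1} (Table 5), after reduction via Proposition \ref{prop:nullspace} and combination with its $G$- and $\sigma$-images — which gives $M^G \subseteq U$. Together with $U \subseteq M^G$ and the dimension count this yields $U = M^G$, which is $9$-dimensional with the stated basis. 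The bulk of the work, and the main obstacle, is this product bookkeeping: expressing the six mixed generators as elements of $U$ and confirming their independence. It is routine given the products of Section \ref{sec:4A}, but requires care with the reductions of Proposition \ref{prop:nullspace} and tacitly uses that $t$ is an indeterminate, the spanning set degenerating at $t = 0$.

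Finally, for the Jordan type claim I would apply Lemma \ref{lem:espaces} with $f(x) = x(x-1)(x-\tfrac12)$, checking that $f(\operatorname{ad}_{v_{(i,j)}})$ annihilates each of the nine basis vectors of $U$; equivalently, that $U$ meets neither the $\tfrac38$- nor the $t$-eigenspace of any $v_{(i,j)}$, which can be read off directly from the eigenspace decomposition in Table \ref{tab:4A}. Hence each $\operatorname{ad}_{v_{(i,j)}}\mid_U$ is semisimple with spectrum contained in $\{1,0,\tfrac12\}$. Since $\{1,0,\tfrac12\}$ is exactly the identity component of the $C_2\times C_2$-grading of $\mathcal{F}_{4A}^{(t)}$ from Theorem \ref{thm:4Afusion} and is therefore closed under $*$, and since $U$ is a subalgebra, the fusion rule induced on $U$ by each $v_{(i,j)}$ is precisely the restriction of $\mathcal{F}_{4A}^{(t)}$ to $\{1,0,\tfrac12\}$, namely $\mathcal{J}(\tfrac12)$. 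This establishes that $U$ is an axial algebra of Jordan type $\tfrac12$.
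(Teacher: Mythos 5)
Your strategy genuinely differs from the paper's. The paper verifies directly, using the product values of Section \ref{sec:4A}, that the $\mathbb{R}[t]$-span of the nine listed vectors is closed under multiplication and is generated by the three $v_{(i,j)}$, identifies this span with $\bigoplus_{\lambda \in \{1,0,\frac12\}} M_{\lambda}^{(v_{(i,j)})}$, and then reads the Jordan fusion rule off Theorem \ref{thm:4Afusion}; your final paragraph reproduces that last step correctly, and your opening reductions --- $U := \langle\langle v_{(1,2)}, v_{(1,3)}, v_{(2,3)}\rangle\rangle \subseteq M^G$ because each $v_{(i,j)}$ is $G$-invariant, and $\operatorname{tr}\bigl(\tau(a_i)\bigr) = 8$ on the basis of Theorem \ref{thm:4A}, whence $M^G$ has rank $\frac14(12 + 3\cdot 8) = 9$ --- are correct and more conceptual than anything in the paper's own proof.

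The gap is in the concluding step, ``together with $U \subseteq M^G$ and the dimension count this yields $U = M^G$, which is $9$-dimensional with the stated basis.'' The coefficient ring of $M_{4A}$ is $\mathbb{R}[t]$, not a field, and over $\mathbb{R}[t]$ a full-rank submodule of a free module of the same rank need not be the whole module; so the sandwich $\mathrm{span}(\mbox{nine vectors}) \subseteq U \subseteq M^G$ between modules of rank $9$ does not collapse, and here it genuinely fails to collapse. Indeed $a_i + a_{-i}$ is visibly $G$-fixed, hence lies in $M^G$, but every $\mathbb{R}[t]$-combination of the nine listed vectors has its $a_{\pm i}$-coordinates divisible by $t$ (the span contains only $t(a_i + a_{-i}) = (ta_i - a_i\cdot v_{(j,k)}) + (ta_{-i} + a_i\cdot v_{(j,k)})$), so the nine vectors are not a basis of $M^G$; your argument therefore cannot identify $U$ with both ends of the sandwich simultaneously. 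Worse, $a_i + a_{-i} \notin U$ at all: specialization $t \mapsto 0$ is an algebra homomorphism, and at $t=0$ every product of two elements of $\{v_{(i,j)}\} \cup \{a_i \cdot v_{(j,k)}\}$ computed in Section \ref{sec:4A} again lies in the span of that six-element set, so the subalgebra of $M(0)$ generated by the $v_{(i,j)}$ avoids $a_i + a_{-i}$, and hence so does $U$. Thus $U \subsetneq M^G$ over $\mathbb{R}[t]$, and your identification is valid only after inverting $t$ --- over $\mathbb{R}(t)$, or for a real specialization $t \neq 0$. To obtain the corollary over $\mathbb{R}[t]$ itself one has to argue as the paper does, exhibiting the spanning set and checking multiplicative closure directly, rather than by a rank count. (Your own caveat that the argument ``tacitly uses that $t$ is an indeterminate'' points at exactly this; note that the same localization issue quietly affects the paper's statement as well, since extracting the individual vectors $ta_i - a_i\cdot v_{(j,k)}$ from products of the generators likewise requires $t$ to be invertible --- but the paper's method, unlike yours, does not rest on the rank count.)
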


\begin{proof}
We can check using the algebra product values calculated in Section \ref{sec:4A} that the smallest subalgebra of $M_{4A}$ containing the vectors $ v_{(1,2)}$, $v_{(1,3)}$ and $v_{(2,3)}$ is the $9$-dimensional algebra with basis given above. It is then straightforward to calculate that this algebra coincides with the direct sum of the $1$, $0$ and $\frac{1}{2}$-eigenspaces of the adjoint action of $v_{(i,j)}$ on $M_{4A}$ for all $i, j$ such that $1 \leq i < j \leq 3$. The fusion rule follows from the fusion rule already calculated in Theorem \ref{thm:4Afusion}. 
\end{proof}

\begin{landscape}

\begin{table}%
\begin{center}
\def\arraystretch{1.75}
\begin{tabular}{| >{$} c <{$} | >{$} c <{$} | >{$} c <{$} | >{$} c <{$} | } \hline

0 & \frac{1}{2} & \frac{3}{8} & t \\\hline 

\begin{tabular}{>{$} c <{$}}
-\frac{4}{3}( a_i + a_{-i} + a_j + a_{-j}) + v_{(i,j)} \\
(\frac{1}{8} -\frac{5t}{6})a_i + (\frac{1}{8} + \frac{t}{6})a_{-i} + \frac{1}{8}(a_j + a_{-j}) - \frac{1}{4}(a_k + a_{-k}) + a_i \cdot v_{(j,k)} \\
\frac{1}{8}(a_i + a_{-i}) + (\frac{1}{8} -\frac{5t}{6})a_j + (\frac{1}{8} + \frac{t}{6})a_{-j} - \frac{1}{4}(a_k + a_{-k}) + a_j \cdot v_{(i,k)} \\
-\frac{t}{3}( a_i + a_{-i} + a_j + a_{-j}) - \frac{2t + 1}{4}(a_k + a_{-k}) + a_k \cdot v_{(i,j)} 
\end{tabular}

& 

\begin{tabular}{>{$} c <{$}}
a_i + a_{-i} - a_j - a_{-j} \\
-\frac{3t}{2} a_i - \frac{t}{2}a_{-i} + \frac{t}{2}v_{(i,j)} + \frac{1}{16}(v_{(i,k)} - v_{(j,k)}) + a_i \cdot v_{(j,k)} \\
-\frac{3t}{2} a_j - \frac{t}{2}a_{-j} + \frac{t}{2}v_{(i,j)} - \frac{1}{16}(v_{(i,k)} - v_{(j,k)}) + a_j \cdot v_{(i,k)} \\
-\frac{t}{2}(a_k - a_{-k}) - \frac{t}{2}v_{(i,j)} + a_k \cdot v_{(i,j)} \\
\end{tabular}

& 

\begin{tabular}{>{$} c <{$}}
a_i - a_{-i} \\
a_j - a_{-j} \\
\end{tabular}

& 

\begin{tabular}{>{$} c <{$}}
a_k - a_{-k} \\
\end{tabular} \\

\hline 
\end{tabular}
\label{tab:4A}
\caption{Eigenvectors of the axis $v_{(i,j)}$ in $M_{4A}$}
\end{center}
\end{table}

\end{landscape}

\section*{Acknowledgements} I am very grateful to Prof. S.~Shpectorov for his helpful comments and advice in the writing of this paper. This project particularly benefited from conversations that took place at the Symmetry vs Regularity conference in Pilsen in July 2018 and for this I would also like to thank the organisers of this conference.

\end{document}